\numberwithin{equation}{section}
\newtheorem{theorem}{Theorem}[section]
\newtheorem{proposition}[theorem]{Proposition}
\newtheorem{algoInt}[theorem]{Algorithm}
\theoremstyle{definition}
\theoremstyle{remark}
\newtheorem{remark}[theorem]{Remark}
\newcommand{\R}{{\mathbb R}}
\renewcommand{\H}{{\mathcal{H}}}
\newcommand{\E}{{\mathcal{E}}}
\newcommand{\C}{\mathcal{C}}
\newcommand{\abs}[1]{\mathopen\vert#1\mathclose\vert}
\newcommand{\norm}[1]{\mathopen\Vert#1\mathclose\Vert}
\newcommand{\intd}{\,{\mathrm d}}
\newcommand{\eps}{\varepsilon}
\newcommand{\iu}{{\rm i}}
\newcommand{\adh}[1]{\operatorname{adh}(#1)}
\newcommand{\wto}{\rightharpoonup}
\DeclareMathOperator{\Ran}{Im}
\DeclareMathOperator{\dist}{dist}
\DeclareMathOperator{\e}{e}
\title[Mountain-pass algorithm for the quasi-linear Schr\"odinger equation]{On the
Mountain-pass algorithm for\\ the quasi-linear Schr\"odinger equation}
\author[C.\ Grumiau]{Christopher Grumiau}
\author[M.\ Squassina]{Marco Squassina}
\author[C.\ Troestler]{Christophe Troestler}
\address{Universite de Mons \newline\indent
Institut de Mathematique \newline\indent
Service d'Analyse Num\'erique \newline\indent
Place du Parc, 20 \newline\indent
B-7000 Mons, Belgique}
\email{Christopher.Grumiau@umons.ac.be}
\email{Christophe.Troestler@umons.ac.be}
\address{Dipartimento di Informatica
\newline\indent
Universit\`a degli Studi di Verona
\newline\indent
C\'a Vignal 2, Strada Le Grazie 15, I-37134 Verona, Italy}
\email{marco.squassina@univr.it}
\thanks{The second author was supported by 2009 national MIUR project:
   ``Variational and Topological
Methods in the Study of Nonlinear Phenomena''. The first and third author are partially supported by the program
``Qualitative study of
 solutions of variational elliptic partial differerential equations. Symmetries,
bifurcations, singularities, multiplicity and numerics'' of the FNRS, project
2.4.550.10.F of the Fonds de la Recherche Fondamentale Collective, Belgium.}
\subjclass[2000]{35D99, 35J62, 58E05, 35J70}
\date{\today}
\keywords{Quasi-linear equations, Mountain Pass solutions, Mountain Pass algorithm}
\begin{document}

\begin{abstract}
  We discuss the application of the Mountain Pass Algorithm to the
  so-called quasi-linear Schr\"odinger equation, 
  which is naturally associated with a class of nonsmooth functionals so
  that the classical algorithm cannot directly be used.
  A change of variable allows us to deal with the lack of regularity.
  We establish the convergence of
  a mountain pass algorithm in this setting.
  Some numerical experiments are also performed and lead to
  some conjectures.
\end{abstract}

\maketitle

\section{Introduction and results}

The aim of this paper is to find numerical solutions of mountain pass
type for the problem 
\begin{equation}
  \label{gs1}
  -\Delta u-u\Delta u^2+ V u =|u|^{p-1}u,\qquad  \text{in $\R^N$},
\end{equation}
for $p>1$ and $V > 0$. This is the equation of standing waves of the
quasi-linear Schr\"odinger equation 
\begin{equation}
   \label{eq.schr1}
 \begin{cases}
 \iu\phi_t+\Delta\phi+\phi\Delta |\phi|^2+|\phi|^{p-1}\phi=0 & \text{in $(0,\infty)\times \R^N$},\\
 \phi(0,x)=\phi_0(x) & \text{in $\R^N$},
 \end{cases}
\end{equation}
where $\iu$ stands for the imaginary unit and the unknown $\phi:(0,\infty)\times \R^N\to {\mathbb C}$ is a complex valued
function. Various physically relevant situations are described by this quasi-linear equation. For example, it is used in plasma physics and
fluid mechanics, in the theory of Heisenberg ferromagnets and magnons and in condensed
matter theory, see e.g.\ the bibliography of~\cite{cojeansqu} and the references therein. The mountain pass solutions
of the semi-linear equation $-\Delta u+V u =|u|^{p-1}u$, corresponding to ground states for the Schr\"odinger equation
\begin{equation}
   \label{eq.schrr}
 \begin{cases}
 \iu\phi_t+\Delta\phi+|\phi|^{p-1}\phi=0 & \text{in $(0,\infty)\times \R^3$},\\
 \phi(0,x)=\phi_0(x) & \text{in $\R^3$},
 \end{cases}
\end{equation}
have been extensively studied in the last decades, both analytically \cite{willem} and numerically. On the numerical side, the typical
tool is the so-called mountain pass algorithm, originally implemented by Choi and McKenna \cite{mpa1} (see also \cite{calsqu9,calsqu11} for a different approach).
This works nicely under the assumption that the functional associated with the problem
is of class $C^1$ on a Hilbert space and it satisfies suitable geometrical assumptions. Now,
we observe that the functional ${\mathcal E}:X\to\bar\R$ naturally but only formally associated with
\eqref{gs1} is
\begin{equation}
  \label{fction}
  u\mapsto \mathcal{E}(u) :=
  \frac{1}{2}\int_{\R^N} (1+2 u^2)|\nabla u|^2 \intd x
  + \frac{1}{2}\int_{\R^N} V u^2 \intd x
  - \frac{1}{p+1}\int_{\R^N} |u|^{p+1} \intd x.
\end{equation}
In fact, taking $X$ as $H^1(\R^N)$ with $N\geq 3$, then the functional \eqref{fction} is not even well defined, as it might be the case that it
assumes the value $+\infty-(+\infty)$ in the range
$(N+2)/(N-2)<p<(3N+2)/(N-2)$. 
In two dimensions, with $X = H^1(\R^2)$, it is well defined from $X$ to $\R\cup\{+\infty\}$ but it is merely
lower semi-continuous. If, instead, $X$ stands for the set of $u\in H^1(\R^N)$ such that $u^2\in H^1(\R^N)$,
then it follows that \eqref{fction} is well defined from $X$ to $\R$, for every choice $1<p<(3N+2)/(N-2)$. From the physical viewpoint
defining $X$ in this way makes it a natural choice for the energy space. On the other hand, for initial data in this space $X$, currently there is
no well-posedness result for problem~\eqref{eq.schr1} (see the discussion in \cite{cojeansqu}). Furthermore,
$X$ is not even a vector space, although $(X,d)$ can be regarded as a complete metric space endowed with
distance the $d(u, v)=\|u-v\|_{H^1} + \|\nabla u^2- \nabla v^2\|_{L^2}$ and it turns out that \eqref{fction} is continuous
over $(X,d)$ suggesting that a possible approach to the study of
problem \eqref{gs1} could be to exploit the (metric) critical point theory
developed in~\cite{corvdegmar}. Nevertheless, this continuity property
is not enough to  establish the convergence of a traditional
Mountain Pass Algorithm.  Actually, it is not even clear
what a satisfying gradient for $\mathcal{E}$ is.
Let us emphasize that we are interested in the convergence of the
algorithm in infinite dimensional spaces which
ensures the convergence of the discretized problem at a rate which
does not blow-up 
when the mesh used for approximation becomes finer. Hence, in    
conclusion, neither the mountain pass algorithm can be directly 
applied for the numerical computation of some solution of \eqref{gs1}
nor, to our knowledge, the current literature (see e.g.~\cite{Lewis-Pang11}
and the references therein)
contains suitable generalization of the mountain pass algorithm to the case of non-smooth functionals, except the case of
locally Lipschitz functional, which unfortunately are incompatible with the regularity available for our framework. On the other hand
in \cite{cojean,cojeansqu}, in order to find the existence and qualitative properties of the solutions to \eqref{gs1}, a
change of variable procedure was performed to relate the solutions $u\in X$ to \eqref{gs1} with the solutions $v\in H^1(\R^N)$ to
an associated semi-linear problem
\begin{eqnarray}
  \label{exis1}
  -\Delta v= f(x, v),\qquad
  f(x, v) := \frac{|r(v)|^{p-1}r(v)- V(x)\, r(v)}{\sqrt{1+2r^2(v)}},
\end{eqnarray}
where the function $r:\R\to\R$ is the unique solution to the Cauchy problem
\begin{equation}
  \label{changevariable-intro}
  r'(s)=\frac{1}{\sqrt{1+2r^2(s)}},\qquad r(0)=0.
\end{equation}
More precisely, $u$ is a smooth solution (say $C^2$) to \eqref{gs1} if
and only if $v=r^{-1}(u)$ is a smooth solution to 
\eqref{exis1}, that is a critical point of the 
$C^1$-functional ${\mathcal T}: H^1(\R^N) \to \R$ defined by
\begin{equation}
  \label{eq:defT}
  v\mapsto \mathcal{T}(v) := \frac{1}{2} \int_{\R^N} |\nabla v|^2\intd x
  - \frac{1}{p+1}\int_{\R^N} |r(v)|^{p+1}\intd x
  + \frac{1}{2}\int_{\R^N} V(x)\, |r(v)|^2\intd x.
\end{equation}
In addition, as we shall see, the mountain pass values and the least
energy values of these functionals 
correspond through the function $r$. Now, by applying the mountain
pass algorithm to ${\mathcal T}$, we can 
find a mountain pass solution $v$ of \eqref{exis1}. Then $u=r(v)$ will
be a mountain pass solution 
of the original problem with a reasonable control on numerical
errors.

We mention that, at least in the case of large constant potentials $V$
and for a general class of quasi-linear problems,
there are some  uniqueness results for the solutions, see
e.g.~\cite{gladiali}.

The article is organized as follows.  In section~\ref{sec2}, we will
establish the equivalence between ground state solutions for
$\mathcal{E}$ and those for $\mathcal{T}$.  Section~\ref{sec:conv-MP}
will recall the mountain pass algorithm and discuss assumptions under
which its convergence is guaranteed for our problem. In section~\ref{num},
we will present our numerical investigations. We finish by a
conclusion giving some conjectures and outlining future work.

\medskip\noindent %
\textit{Notation \& terminology.} We will denote $\Ran f$ the image of
a function $f : A \to B$ i.e., $\Ran f = \{ f(x) : x \in A\}$.
The notation $\adh A$ stands for the topological closure of the
set~$A$.  The norm in the Lebesgue spaces $L^p(\Omega)$ will be written
$\abs{\cdot}_{L^p(\Omega)}$ ($\Omega$ will be dropped if clear from
the context).  We will call a (nonlinear) \emph{projector} any
function that is idempotent i.e., any function $f$ such that
$f(x) = x$ for all $x \in \Ran f$.

\section{Equivalence between Mountain Pass values}
\label{sec2}

We noticed in the introduction that, thanks to the change of unknown
$u = r(v)$, solutions $v \in H^1(\R^N) \cap C^2(\R^N)$
to~\eqref{exis1} correspond to solutions $u \in X\cap
C^2(\R^N)$ to~\eqref{gs1}
where $X=\{u\in H^1(\R^N):\, u^2\in H^1(\R^N)\}$ (see e.g.\
\cite{cojean,cojeansqu}).
In this section,
we want to show that, in addition, if $v$ is at the Mountain Pass
level, then $u$ is at the Mountain Pass level too
in a suitable functional setting, see formula \eqref{leveldes}. Hence, numerically
computing a Mountain Pass solution $v$ up to a certain error, yields a
Mountain Pass solution $u=r\circ v$ to the original
problem~\eqref{gs1} up to a certain error (involving also the error
due to the numerical calculation of the solution $r$ to the Cauchy
problem in \eqref{changevariable-intro}).  In order to prove this,
notice that ${\mathcal T}: H^1(\R^N) \to \R$ defined
by~\eqref{eq:defT} reads
\begin{align*}
  {\mathcal T}(v)
  &=\frac{1}{2} \int_{\R^N} \abs{\nabla v}^2 \intd x
  - \int_{\R^N} F(x, v) \intd x
\end{align*}
where we have set $F(x, t) := \int_0^t f(x, s) \intd s$ with $f$
defined by~\eqref{exis1}.  Notice first that, if
$u=r(v)$ where $v\in H^1(\R^N)\cap C^2(\R^N)$, then $u \in X\cap C^2(\R^N)$.
Furthermore, it follows that
\begin{equation}
  \label{energybal}
  {\mathcal E}(u)= {\mathcal T}(v),
\end{equation}
where $\mathcal{E}$ is the action defined by~\eqref{fction}.
Indeed, we have
\begin{align*}
  {\mathcal E}\bigl(r(v)\bigr)
  &= \frac{1}{2}\int_{\R^N}
  \bigl(1 + 2r^2(v)\bigr) (r')^{2}(v) \abs{\nabla v}^2 \intd x
  -\frac{1}{p+1}\int_{\R^N} \abs{r(v)}^{p+1} \intd x
  +\frac{1}{2}\int_{\R^N} V(x) \abs{r(v)}^2 \intd x\\
  % &= \frac{1}{2}\int_{\R^N} \frac{1}{1+2r^2(v)} \abs{\nabla v}^2 \intd x
  % + \int_{\R^N}\frac{1}{1+2r^2(v)} r(v)^2 \abs{\nabla v}^2 \intd x
  % - \frac{1}{p+1}\int_{\R^N} \abs{r(v)}^{p+1} \intd x\\
  % &\hspace{20em}+\frac{1}{2}\int_{\R^N} V(x) \, \abs{r(v)}^2 \intd x\\
  &= \frac{1}{2}\int_{\R^N} \abs{\nabla v}^2 \intd x
  - \frac{1}{p+1}\int_{\R^N} \abs{r(v)}^{p+1} \intd x
  + \frac{1}{2}\int_{\R^N} V(x)\, \abs{r(v)}^2 \intd x\\
  &= {\mathcal T}(v),
\end{align*}
thanks to the Cauchy problem~\eqref{changevariable-intro}.

\begin{proposition}
  \label{correspondence MP}
  Let $\hat v\in H^1(\R^N)\cap C^2(\R^N)$ be a Mountain Pass
  solution to problem~\eqref{exis1}, that is
  \begin{equation}
    \label{eq:MP-T}
    {\mathcal T}(\hat v)=
    \mathop{\inf\vphantom{\sup}}_{\gamma\in\Gamma_{\mathcal T}\,}
    \sup_{t\in [0,1]}{\mathcal T}(\gamma(t)),\qquad
    \Gamma_{\mathcal T}=\big\{\gamma\in C([0,1],H^1(\R^N)):
    \gamma(0)=0,\ {\mathcal T}(\gamma(1))<0\big\}.
  \end{equation}
  Then $\hat u=r (\hat v)\in X\cap C^2(\R^N)$ is a Mountain Pass solution
  to problem~\eqref{gs1}, that is
  \begin{equation}
    \label{leveldes}
    {\mathcal E}(\hat u)=
    \mathop{\inf\vphantom{\sup}}_{\eta\in\Gamma_{\mathcal E}\,}
    \sup_{t\in [0,1]}{\mathcal E}(\eta(t)),\qquad
    \Gamma_{\mathcal E}=\big\{\eta\in C([0,1],X):
    \gamma(0)=0,\ {\mathcal E}(\gamma(1))<0\big\}.
  \end{equation}
  Furthermore, $\hat u$ is also a least energy solution  to~\eqref{gs1}
  for the energy ${\mathcal E}$
  (i.e., achieving the infimum of $\mathcal{E}$ on non-trivial
  solutions to~\eqref{gs1}).
\end{proposition}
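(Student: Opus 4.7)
The plan is to lift the Mountain Pass structure from $H^1(\R^N)$ to $(X,d)$ through the nonlinear composition map $R \colon H^1(\R^N) \to X$, $R(v) := r\circ v$, show that $R$ is a homeomorphism with inverse $R^{-1}(u) = r^{-1}\circ u$, and then transfer the variational characterization using the identity $\mathcal E\circ R = \mathcal T$ established just before the statement, together with $R(0) = 0$.

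First I would check that $R$ and $R^{-1}$ are well defined and continuous. Since $\abs{r'}\le 1$, one has $\abs{r(v)}\le \abs{v}$ and $\abs{\nabla r(v)} = \abs{r'(v)\nabla v}\le \abs{\nabla v}$; moreover $\nabla r^2(v) = (2r(v)/\sqrt{1+2r^2(v)})\,\nabla v$ with a coefficient bounded by $\sqrt 2$, so $r(v)\in X$ for every $v\in H^1(\R^N)$. Conversely, for $u\in X$, the formula $(r^{-1})'(u) = \sqrt{1+2u^2}$ gives the pointwise identity $\abs{\nabla r^{-1}(u)}^2 = (1+2u^2)\abs{\nabla u}^2 \le \abs{\nabla u}^2 + \abs{\nabla u^2}^2$, which is precisely controlled by the two summands in $d$; while $r^{-1}(u)\in L^2$ follows from the asymptotics $r^{-1}(s) = O(s^2)$ combined with $u\in L^4$ (ensured by $u^2\in H^1\subset L^2$). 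Continuity of $R$ and of $R^{-1}$ then follows from the Lipschitz character of $r$ on any ball and the pointwise identity above, each combined with a dominated convergence argument.

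Second, since $R$ is a homeomorphism with $R(0)=0$ and $\mathcal E\circ R = \mathcal T$, the composition $\gamma \mapsto R\circ \gamma$ is a bijection between $\Gamma_{\mathcal T}$ and $\Gamma_{\mathcal E}$ that preserves the supremum of the energy along the path. Therefore
\[
  c_{\mathcal E} \;=\; \inf_{\eta\in\Gamma_{\mathcal E}} \sup_{t\in[0,1]} \mathcal E(\eta(t)) \;=\; \inf_{\gamma\in\Gamma_{\mathcal T}} \sup_{t\in[0,1]} \mathcal T(\gamma(t)) \;=\; c_{\mathcal T},
\]
and in particular $\mathcal E(\hat u) = \mathcal T(\hat v) = c_{\mathcal T} = c_{\mathcal E}$, which is exactly \eqref{leveldes}. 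To deduce the least-energy property of $\hat u$ it suffices to invoke the analogous property for the semilinear problem \eqref{exis1} (see \cite{cojean,cojeansqu}): the Mountain Pass level of $\mathcal T$ coincides with the infimum of $\mathcal T$ on nontrivial smooth solutions, via a standard rescaling path argument. Transferring back, any nontrivial smooth solution $u^*$ of \eqref{gs1} yields $v^* = r^{-1}(u^*)$, a nontrivial smooth solution of \eqref{exis1}, and the identity $\mathcal E(u^*) = \mathcal T(v^*)\ge c_{\mathcal T} = \mathcal E(\hat u)$ concludes.

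The main obstacle will be the continuity of $R^{-1}$: since $(r^{-1})'$ grows unboundedly at infinity and the space $(X,d)$ is not normed, one cannot rely on a Lipschitz bound and must instead exploit the metric $d$ in full, keeping careful track of the two separate $L^2$-terms controlling $\nabla u_n$ and $\nabla u_n^2$ in order to pass to the limit via dominated convergence. All other steps are then essentially bookkeeping on top of the identity $\mathcal E\circ R = \mathcal T$.
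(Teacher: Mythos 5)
Your proposal is correct and follows essentially the same route as the paper: the identity $\mathcal{E}(r(v))=\mathcal{T}(v)$ together with the bijection $\gamma\mapsto r\circ\gamma$ between $\Gamma_{\mathcal T}$ and $\Gamma_{\mathcal E}$ gives \eqref{leveldes}, and the least-energy claim is obtained by citing the Jeanjean--Tanaka result for the semilinear problem and transferring back through $r$ (the paper delegates this transfer to Step~II of Theorem~1.3 in \cite{cojean}). The only difference is that you spell out the continuity of $v\mapsto r\circ v$ and its inverse on $(X,d)$, which the paper dismisses as ``readily seen.''
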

\begin{proof}
It is readily seen that if $\hat v\in H^1(\R^N)\cap C^2(\R^N)$ is a solution to \eqref{exis1},
then $\hat u=r (\hat v)\in X\cap C^2(\R^N)$ is a solution to \eqref{gs1}.
Setting now $\tilde \Gamma_{\mathcal T}:=\{r\circ\gamma :
\gamma\in \Gamma_{\mathcal T}\}$,
on account of \eqref{energybal}, we have
\begin{equation*}
  {\mathcal E}(\hat u)={\mathcal T}(\hat v)
  = \mathop{\inf\vphantom{\sup}}_{\gamma\in\Gamma_{\mathcal T}\,}
  \sup_{t\in [0,1]}{\mathcal T}(\gamma(t))
  = \mathop{\inf\vphantom{\sup}}_{\tilde\gamma\in\tilde\Gamma_{\mathcal T}\,}
  \sup_{t\in [0,1]}{\mathcal E}(\tilde\gamma(t)).
\end{equation*}
On the other hand, we will show that $\tilde\Gamma_{\mathcal
  T}=\Gamma_{\mathcal E}$, yielding assertion~\eqref{leveldes}.  In
fact, let $\eta\in \Gamma_{\mathcal E}$ and consider
$\gamma:=r^{-1}\circ\eta$. Then, $\eta=r\circ\gamma$ with $\gamma\in
C\bigl([0,1],H^1(\R^N)\bigr)$, $\gamma(0)=0$ and ${\mathcal
  T}(\gamma(1))={\mathcal E}(r\circ\gamma(1))={\mathcal
  E}(\eta(1))<0$. Hence $\eta\in \tilde\Gamma_{\mathcal T}$. Vice
versa, if $\eta\in \tilde\Gamma_{\mathcal T}$, there exists $\gamma\in
\Gamma_{\mathcal T}$ such that $\eta=r\circ\gamma$. Then, it is
readily seen that $\eta\in C([0,1],X)$, $\eta(0)=r(\gamma(0))=r(0)=0$
and ${\mathcal E}(\eta(1))={\mathcal T}(\gamma(1))<0$.
By Theorem~0.2 of~\cite{Jeanjean-Tanaka}, $\hat v$ is a least energy
solution.  In turn, by Step~II in the proof of Theorem~1.3 of~\cite{cojean}
the last assertion follows.
\end{proof}

\begin{remark}
  As we will see in the next section, in our case $f(v)/v$ (where $f$
  is defined by~\eqref{exis1}) is increasing on $(0, +\infty)$ and
  therefore mountain pass solutions $\hat v$ can be characterized by
  \begin{equation*}
    \mathcal{T}(\hat v) =
    \inf_{v \ne 0} \sup_{t \ge 0} \mathcal{T}(tv)
  \end{equation*}
  instead of~\eqref{eq:MP-T}.  The numerical algorithm described below
  finds a local minimum $\tilde v$ of $v \mapsto \sup_{t \ge 0}
  \mathcal{T}(tv)$ but there is no absolute guarantee that $\tilde v$
  is a mountain pass solution.  Nonetheless, $\tilde v$ is a ``saddle
  point of mountain pass type''~\cite[Definition~1.2]{Lewis-Pang11}
  i.e., there exists an open neighborhood $V$ of $\tilde v$ such that
  $\tilde v$ lies in the closure of two path-connected components of
  $\{ v \in V : \mathcal{T}(v) < \mathcal{T}(\tilde v) \}$.  Since the
  map $v \mapsto r(v) : H^1(\R^N) \to X$ is an homeomorphism and
  $\mathcal{E}(u) = \mathcal{T}\bigl(r^{-1}(u)\bigr)$, the same
  characterization holds for $\tilde u := r(\tilde v)$.  In
  conclusion, the above discussion can be thought as an extension of
  the correspondence of proposition~\ref{correspondence MP} to saddle
  point solutions.
\end{remark}

\section{Mountain pass algorithms
  --- convergence up to a subsequence}
\label{sec:conv-MP}

Let $\H$ be a Hilbert space with
%inner product $\langle \cdot\vert\cdot\rangle$ and 
norm
$\norm{\cdot}$, $E$ a closed subspace of $\H$,
% with orthogonal projections $P_{E}$ and $P_{E^\perp}$, 
$\mathcal{T}:\H\to\R$
a $\C^1$-functional and $P$ a peak selection for $\mathcal{T}$
relative to $E$, i.e., a function
\begin{equation*}
  P: \H\setminus E\to \H: u\mapsto P(u)
\end{equation*}
such that, for any $u\in \H\setminus E$,   $P(u)$ is a local
maximum point of
$\mathcal{T}$ on $\R^+ u \oplus E = \{ tu + e : t \ge 0$ and $e \in E\}$.
The Mountain Pass Algorithm (MPA) uses $P$ to perform a constrained
steepest descent search in order to find critical points of $\mathcal{T}$.
The version we use in this paper is a slightly modified version of the
MPA introduced by
Y.~Li and
J.~Zhou~\cite{zhou1} which in turn is based on the pioneer work of
Y.~S.~Choi and P.~J.~McKenna~\cite{mpa1}.  Let us first recall the
version of Y.~Li and J.~Zhou.

\begin{algoInt}[MPA~\cite{zhou1}]
  \label{mpa}
  \begin{enumerate}
  \item Choose an initial guess $u_0\in\Ran (P)$, a tolerance
    $\varepsilon >0$ and let $n\gets 0$;
  \item if $\norm{\nabla \mathcal{T}(u_n)}\leq \varepsilon $ then  stop; \\
    otherwise,  compute
    \begin{equation}
      \label{eq:step-Zhou}
      u_{n+1}=P\left(\frac{u_n - s_n \nabla \mathcal{T}(u_n)}{
          \norm{u_n - s_n \nabla \mathcal{T}(u_n)}}\right),
    \end{equation}
    for some $s_n\in (0, +\infty)$ satisfying the Armijo's type
    stepsize condition
    \begin{equation*}
      \mathcal{T}(u_{n+1})-\mathcal{T}(u_n)
      < -\frac{1}{2}\norm{\nabla \mathcal{T}(u_n)}\norm{u_{n+1}-u_n};
    \end{equation*}
  \item let $n\gets n+1$ and go to step $2$.
  \end{enumerate}
\end{algoInt}

Conditions under which the sequence $(u_n)$ generated by the MPA
converges, up to a subsequence, to a critical point of $\mathcal{T}$
on $\Ran P$ have been studied~\cite{zhou2} in the
case $\dim E < \infty$.

In~\cite{tacheny}, N.~Tacheny and C.~Troestler proved that,
if an additional metric projection on the
cone $K$ of non-negative functions is performed at each step of the algorithm,
the MPA still converges (at least up 
to a subsequence) and that the limit of the generated sequence is
guaranteed to be a
non-negative solution.  In that paper, the authors studied
a variant of the above algorithm were the update at each step is
given by
\begin{equation}
  \label{eq:stepsize}
  u_{n+1} = P \bigl(P_K(u_n[s_n])\bigr)
  \quad\text{where }
  u_n[s] := u_n - s \frac{\nabla \mathcal{T}(u_n)}{
    \norm{\nabla \mathcal{T}(u_n)}},
\end{equation}
for some $s_n \in S(u_n) \subseteq (0,+\infty)$
with $P_K$ being the metric projector on the cone $K$.
Notice that, with this formulation, the projector $P$ only needs to be
defined on the cone $K$.
The set $S(u_n)$ of
admissible stepsizes is defined as follows: first let
\begin{equation*}
  S^{*}(u_n) :=
  \bigl\{ s>0 :\hspace{0.5em}
  u_n[s]\ne 0 \hspace{0.5em}\text{and}\hspace{0.5em}
  \mathcal{T}\bigl(PP_K(u_n[s])\bigr) - \mathcal{T}(u_n)
  < -\tfrac{1}{2}s \norm{\nabla \mathcal{T}(u_n)} \bigr\}
\end{equation*}
and then define $S(u_n):= S^{*}(u_n)\cap \left(\frac{1}{2}\sup
  S^{*}(u_n), +\infty \right)$.  Note that the right hand side of the
inequality to satisfy does not depend on~$P$.  Under the following 
assumptions on the  action functional 
$\mathcal{T}$ and the peak selection $P$:
\begin{enumerate}
\item[(P1)] $P : K\setminus\{0\} \to K\setminus\{0\}$ is well defined
  and continuous;
\item[(P2)] $\inf_{u \in \Ran(P) \cap K} \mathcal{T}(u) >-\infty$;
\item[(P3)] $0 \notin  \adh{\Ran P \cap K}$;
\item[(P4)] $\mathcal{T}(P_K(u)) \le \mathcal{T}(u) + o(\dist(u,K))$
  as $\dist(u,K) \to 0$;
\end{enumerate}
they prove that the algorithm generates a Palais-Smale sequence in~$K$. 
 Roughly, using
a numerical deformation lemma, they show that a step 
$s_n \in S(u_n)$ exists and that it can be
chosen in a ``locally uniform'' way.  The trick is to avoid $s_n$
being arbitrarily close to $0$ without being ``mandated'' by the
functional.  Let us remark that the definition of $S(u_n)$
is natural (but not the only possible one)
to force this stepsize not to be ``too small''.  Then, under
some additional    
compactness condition (e.g.\ the Palais-Smale condition or a
concentration compactness result), they establish the convergence up to a
subsequence.  If furthermore the solution is isolated in some sense,
the convergence of the whole sequence is proved.

Let us also mention that, recently, inspired from the
theoretical existence result of A.~Szulkin and T.~Weth~\cite{szulkin},
the convergence of the sequence generated by the mountain pass
algorithm has been  proved 
(in some cases, up to a subsequence) even when $\dim E = \infty$ and
$\Omega =\R^N$ (see~\cite{grumtroe}). 

Let us come back to our problem and verify that
assumptions~(P1)--(P4) hold for the functional $\mathcal{T}$
given by~\eqref{eq:defT} and the projection $P$ 
defined as follows: $P(u)$ is the unique maximum point of
$\mathcal{T}$ on the half line $\{ tu : t>0 \}$.

The autonomous case $-\Delta u - u \Delta u^2 = g(u)$
was studied by M.~Colin and
L.~Jeanjean~\cite{cojean} who proved the existence of a positive
solution under the assumptions
\begin{enumerate} 
\item[(g0)] $g$ is locally H\"older continuous on $[0, +\infty)$;
\item[(g1)] $\lim_{u\to 0} \frac{g(u)}{u} <0$;
\item[(g2)] $\lim_{u\to +\infty} \abs{g(u)}/u^{(3N+2)/(N-2)} =0$ when
  $N\ge 3$ or,\\
  for any $\alpha >0$, there exists $C_\alpha >0$ such that, for all
  $u \ge 0$,
  $\abs{g(u)}\le C_\alpha \e^{\alpha u^2}$ when $N=2$;
\item[(g3)] $\exists \psi >0$ such that $G(\psi)>0$ where $G(u) :=
  \int_{0}^u g(t)\intd t$ when $N\ge 2$ (or $G(\psi)=0$,
  $g(\psi)>0$ and $G(u)<0$ and $u\in (0,\psi)$ when
  $N=1$).
\end{enumerate}
In our case, this situation corresponds to $V\neq 0$ constant and
$g(u) = |u|^{p-1}u - Vu$ where assumptions (g0)--(g3) are
verified when $p \in
\bigl(3, \frac{3N+2}{N-2}\bigr)$.

Let us now work with the equivalent problem $-\Delta v = r'(v)
g(r(v))$ where $r$ is defined as in the introduction and  $g$ verifies
(g0)--(g3). Let us call
$f(v) := r'(v)g(r(v))$. Recalling that
$r(v) \sim v$ 
when $v \to 0$ and $r(v) \sim\sqrt{v}$ when $v \to
+\infty$~\cite{cojean}, (g1) implies that $f$ verifies  $\lim_{v\to
  0} \frac{f(v)}{v}<0$. Moreover, as a consequence of~(g2), $f$ is
subcritical with respect to the critical Sobolev exponent 
$\frac{N+2}{N-2}$.  Therefore $\mathcal{T}$ is well defined on
$H^1(\R^N)$ and $0$ is a local minimum of $\mathcal{T}$.  This
establishes (P2)--(P3).  Because of~(g3) there exists a $v^*$ such
that $\mathcal{T}(v^*) < 0$ but, in order to have (P1) and (P4),
we need to replace (g3) with the following stronger
assumptions (see~\cite{tacheny}):
\begin{enumerate}
\item[(g4)] $F(v)/v^2 \to +\infty$  when $v \to +\infty$
  where $F(v) := \int_0^v f(s) \intd s$;
\item[(g5)]  $v\mapsto f(v)/v$ is
  %decreasing on $(-\infty, 0)$ and
  increasing on $(0,+\infty)$.
\end{enumerate}

Let us remark that when the domain  is
bounded (as in numerical experiments), it is enough to require
$\lim_{u\to 0} \frac{f(u)}{u} \le 0$
instead of $\lim_{u\to 0} \frac{f(u)}{u} < 0$. So we can work with $V=0$ 
in this case.

For our problem~\eqref{exis1} with $V\neq 0$ constant, it is easy
to see that~(g4) is satisfied. 
Let us now show that property~(g5) holds
for $p \ge 3$.  First, let us  remark that the derivative of
$v \mapsto r^k(v) / \bigl(v \sqrt{1 + 2r^2(v)}\bigr)$ is a positive
function times
\begin{equation}
  \label{eq:(f/v)'}
  \bigl(k + 2(k-1) r^2\bigr) (r' \cdot v)
  - \bigl((1 + 2r^2) \cdot r \bigr) .
\end{equation}
For $k = 1$, this quantity becomes $r' v - (1 + 2r^2)r$ which is
negative because $r'(v) \cdot v \le r$ (see
e.g.~\cite[Lemma~2.2]{cojean}).  Thus the map $v
\mapsto - V r(v) / \bigl(v \sqrt{1 + 2r^2(v)}\bigr)$ is
increasing.
As a consequence, it remains to prove that $f(v)/v$ is increasing when
$V=0$.  For this, it is enough to show that \eqref{eq:(f/v)'} is
positive for $k = p$.  Using the inequality $r'(v) \cdot v \ge
\tfrac{1}{2} r$ (see e.g.~\cite[Lemma~2.2]{cojean}), one readily
proves the assertion.

The above arguments imply that the mountain pass algorithm applied
to~$\mathcal{T}$ generates a Palais-Smale sequence
when $3<p< \frac{3N+2}{N-2}$.  
Numerically, it is natural to ``approximate'' the entire space 
by large bounded  domains $\Omega_R$ that are symmetric around
$0$. In the numerical experiments, we 
will consider $\Omega_R =(-R/2,R/2)^N$.  On  
$\Omega_R$, the Palais-Smale  condition holds and consequently the MPA
converges up to a subsequence.  This approximation is
reasonable for two reasons. 
First, the solution $v(x)$ on $\R^N$ goes exponentially fast to
$0$ when $\abs{x} \to \infty$.   Second, if we
consider a family of solutions 
$(v_R)$ on $\Omega_R$  which is bounded and stays away from
$0$ and we extend $u_R$ by $0$ on $\R^N \setminus \Omega_R$, 
we will now sketch an argument showing
that $(v_R)$ converges up to a subsequence to a non-trivial solution on
$\R^N$ (see also~\cite{DingNi} where authors prove that, for some
semilinear elliptic equations $-\Delta v + V v = f(x,v)$, ground
state solutions on large domains weakly converge to a solution
on $\R^N$). 
The boundedness ensures that, taking if necessary a subsequence,
$v_R \wto v$.
At this point, it may well be that $v = 0$.
Nevertheless, 
E.~Lieb~\cite{lieb} proved that if  a 
family of functions  $(v_R)$ is bounded away from zero in $H^1(\R^N)$
 then there exists at least one family $(x_R)\subseteq\R^N$ such
that  $v_R(\cdot - x_R)\wto v^*\ne 0$ up to a subsequence.
To conclude that $v^*\ne 0$, it suffices for example to pick up $x_R$
so that $\int_{B(x_R, 1)} \abs{v_R}^2 \intd x \not\to 0$.
Intuitively, the role of $x_R$ is to bring back  
(some of) the mass that $v_R$ may loose at infinity.  
It is then classical to prove
that $v^*$ is a non-trivial solution on $\R^N$.
If moreover $(v_R)$ is a family of ground states, no mass can be lost
at infinity and so $v_R(\cdot - x_R) \to v^*$.
In this case, $v^*$ is a ground state of the problem on $\R^N$.
Of course,
if $(x_R)$ is bounded in $\R^N$ then $v_R(\cdot - \tilde{x}_R)$ weakly
converges to  a translation of $v^*$ for any bounded family
$(\tilde{x}_R)$. In particular, for $\tilde x_R = 0$,
$v_R$ weakly converges, up to a
subsequence, to a non-trivial solution~$v$.    By regularity, $v_R\to v$ in
$C^1_{\text{loc}}(\R^N)$ up to a subsequence.
  
As the moving plane method can be applied
to the autonomous case (see~\cite{PucciSerrin}),
the positive solutions $v_R$ are even w.r.t.\
each hyperplane $x_i = 0$ and are decreasing in each direction $x_i$
from $x_i=0$ to $x_i = \pm R$.
As $(v_R)$ is bounded away from $0$, the mass of $v_R$
is located around~$0$.  Thus $v_R$
converges up to a subsequence to a non-zero positive solution
of~\eqref{exis1} on
$\R^N$.  Moreover, it is expected that the ground state solutions are
unique, up to a 
translation, which would imply the convergence of the whole family
$(v_R)$.  This   uniqueness result has been proved for $V$
sufficiently large (see~\cite{gladiali}).  

The above reasoning can be applied to the sequence generated by the MPA.
In Section~\ref{num}, the numerical experiments will provide bounded
families $(v_R)$ of positive (approximate) solutions
on $\Omega_R$ which are bounded away from 
zero (when $V \ne 0$). Thus they converge to a solution on $\R^N$ as
$R \to \infty$.

\medbreak

For the non-autonomous case ($V$ non-constant),  M.~Colin and
L.~Jeanjean~\cite{cojean} proved the existence of a positive solution for the
equation $ -\Delta u-u\Delta u^2+ V(x) u = g(u)$  on $\R^N$ when
\begin{enumerate}
\item[(V1)] there exists $V_0>0$ such that $V(x)>V_0$ on $\R^N$;
\item[(V2)] $\lim_{\abs{x}\to +\infty} V(x) =: V_\infty \in \R$ and
  $V(x)\leq V_\infty$ on $\R^N$;
\item[(g1$'$)] $g(0)=0$ and $g$ is continuous;
\item[(g2$'$)] $\lim_{u\to 0}
  \frac{g(u)}{u} =0$;
\item[(g3$'$)] there exists $p<\infty$ when $N=1,2$
  or $p< (3N+2)/(N-2)$ if $N\geq 3$
  and  $C>0$ such that $\abs{g(u)}\leq C (1+  \abs{u}^p)$ for any $u\in\R$. 
\item[(g4$'$)] $\exists
  \mu > 4$ such that $0 < \mu G(u) \leq g(u)u$ for
  any $u>0$  where
  $G(u) := \int_{0}^u g(t)\intd t$.
\end{enumerate}
These assumptions are clearly satisfied for our model nonlinearity
$g(u) = \abs{u}^{p-1}u$ with $3 < p <
\frac{3N+2}{N-2}$.
As before, to prove that $\mathcal{T}$ possesses the properties
(P1)--(P4), we need to replace (g4$'$) with a slightly stronger
assumption, namely that $v\mapsto f(x,v)/v$ is
increasing on $(0,+\infty)$ for almost every $x \in \R^N$ where $f$ is
defined by~\eqref{exis1}.  Essentially repeating the arguments that we
developed for the autonomous case, one can show
that this assumption is statisfied
for our model problem.  As a consequence, the MPA generates a
Palais-Smale sequence.  Since the Palais-Smale condition holds
for the ground state level
(even on $\R^N$ when $V$ is non-constant), the MPA sequence converges up
to a subsequence to a solution of~\eqref{exis1}.

Concerning the approximation of $\R^N$ by large domains, we can argue
similarly to the autonomous case. Numerically (see
Section~\ref{num:non-constant V}),
$(v_R)$ is bounded away from zero and the peaks of
$v_R$ are located around local minimums of $V$. So, the entire mass of $v_R$ is
not going to infinity.  Thus, as in the autonomous case, $v_R\to v$ in
$C^1_{\text{loc}}$ with $v$ being a non-trivial solution on $\R^N$.

\section{Numerical experiments}
\label{num}

In this section, we 
compute ground state solutions to problem~\eqref{gs1}
using algorithm~\ref{mpa} with the update step~\eqref{eq:stepsize}
(instead of~\eqref{eq:step-Zhou}) on problem~\eqref{exis1}.
The algorithm relies at each step on the finite element method.  Let us
remark that approximations are saddle points of the functional but,
as the algorithm is a constrained steepest descent method, it is not
guaranteed that they are gound state solutions.  Never\-the\-less, no
non-trivial solutions with lower energy have been found numerically.

Let us now give more details on the computation of various
objects intervening in the procedure.  As we already
motivated above, the numerical algorithm will seek solutions $v$
to~\eqref{exis1} on a ``large'' domain $\Omega_R := (-R/2,R/2)^2$ with
zero Dirichlet boundary conditions instead of the whole space~$\R^2$.
Functions of $H^1_0(\Omega_R)$ will be approximated using
$P^1$-finite elements on a Delaunay triangulation of $\Omega_R$
generated by Triangle~\cite{Triangle}.  The matrix of the quadratic
form $(v_1, v_2) \mapsto \int_{\Omega_R} \nabla v_1 \nabla v_2$ is
readily evaluated on the finite elements basis.  A quadratic
integration formula on each triangle is used to compute
\begin{math}
  v \mapsto 
  - \frac{1}{p+1}\int_{\Omega_R} |r(v)|^{p+1}\intd x
  + \frac{1}{2}\int_{\Omega_R} V(x)\, |r(v)|^2\intd x  
\end{math}. %
The function $r$ is approximated using a
standard adaptive ODE solver.
The gradient $g := \nabla\mathcal{T}(v)$ is computed in the usual way:
the function $g \in H^1_0(\Omega_R)$ is the solution of the equation
$\forall \varphi \in H^1_0$, $(g | \varphi)_{H^1_0} = \mathrm{d}
\mathcal{T}(v)[\varphi]$ or, equivalently, $g$ is the solution of the
following linear equation
\begin{equation*}
  \begin{cases}
    -\Delta g = -\Delta v - f(x, v)
    &\text{in } \Omega_R,\\
    g = 0& \text{on } \partial\Omega_R,
  \end{cases}
\end{equation*}
where $f$ is defined in~\eqref{exis1}.  This equation is solved using
the finite element method.

Since in our case $E = \{0\}$, the projector $P(u)$ is easily computed
for any $u \ne 0$: first we pick a $t_1 > 0$ large enough so that
$\mathcal{T}(t_1u) \le 0$ and then we use Brent's method to obtain the
point at which $t \mapsto \mathcal{T}(tu)$ achieves its maximum in
$[0, t_1]$ (alternatively, one could seek $t$ such that
$\mathrm{d}\mathcal{T}(tu)[u] = 0$).
The stepsize $s_n$ of~\eqref{eq:stepsize} is determined as follows: we
use Brent's method to compute a value of $s$ minimizing
$[0,1] \to \R : s \mapsto \mathcal{T}\bigl( P(P_K(u_n[s])) \bigr)$.
This choice guarantees that no arbitrary small steps are taken unless
required by the functional geometry.

The starting function for the MPA is always $(x_1,x_2) \mapsto \bigl(0.25 -
(x_1/R)^2\bigr) \bigl(0.25 - (x_2/R)^2\bigr)$.
The program stops when the gradient
of the energy functional at the approximation has a norm less than
$10^{-2}$.  An simple adaptive mesh refinement is performed during the
MPA iterations in order to increase accuracy while keeping the cost
reasonable.  The approximate solution is then further improved using a
few iterations of Newton's method.

\subsection{Zero potential}

To start, we study the problem
\begin{equation}
  \label{exo1u}
  -\Delta u-u\Delta u^2 =|u|^{p-1}u
\end{equation}
in the open bounded domain $(-0.5, 0.5)^2$.  Running the mountain pass
algorithm with $p = 4$ and $R = 1$, we get the results presented in
Fig.~\ref{fig:square,V=0,p=4} and Table~\ref{tableValues,V=0,p=4}.
These experiments suggest that the solutions go to $0$ as $R \to
\infty$, that is when balls become larger.
% \FIXME{Establish $v_R \to 0$ and discuss the fact that it then is
%   assymp.\ equiv.\ to a rescaling of $-\Delta v = v^p$.}
This is not surprising.
Indeed, for $N\ge 2$, the classical Poh\v ozaev identity for the semi-linear
problem $-\Delta v= f(v)$ on $\R^N$ reads
\begin{equation}
  \label{poho}
  \int_{\R^N}\Bigl[\frac{N-2}{2}f(v)v - NF(v) \Bigr] \intd x=0
\end{equation}
where $F$ is the primitive of $f$ with $F(0)=0$. In our case, 
$f$ is given by eq.~\eqref{exis1} with $V = 0$ and thus
$F(v) = {\abs{r(v)}^{p+1}}/({p+1})$.
Hence for instance for $N=2$, the condition~\eqref{poho} becomes
$$
\int_{\R^2} \abs{r(v)}^{p+1}=0.
$$
Thus $r(v)=0$ or, equivalently, $v=0$.

We also repeat the experiments with the larger exponent $p = 6$ i.e.,
we consider the problem:
\begin{equation}
  \label{exo2u}
  -\Delta u-u\Delta u^2 =|u|^{5}u.
\end{equation}
The same conclusions can be drawn in this case.

\begin{figure}[h!t]
  \vspace*{-6ex}%
  \begin{tikzpicture}[x=0.25\linewidth, y=0.18\linewidth]
    \node at (0,0) {%
      \includegraphics[width=0.25\linewidth]{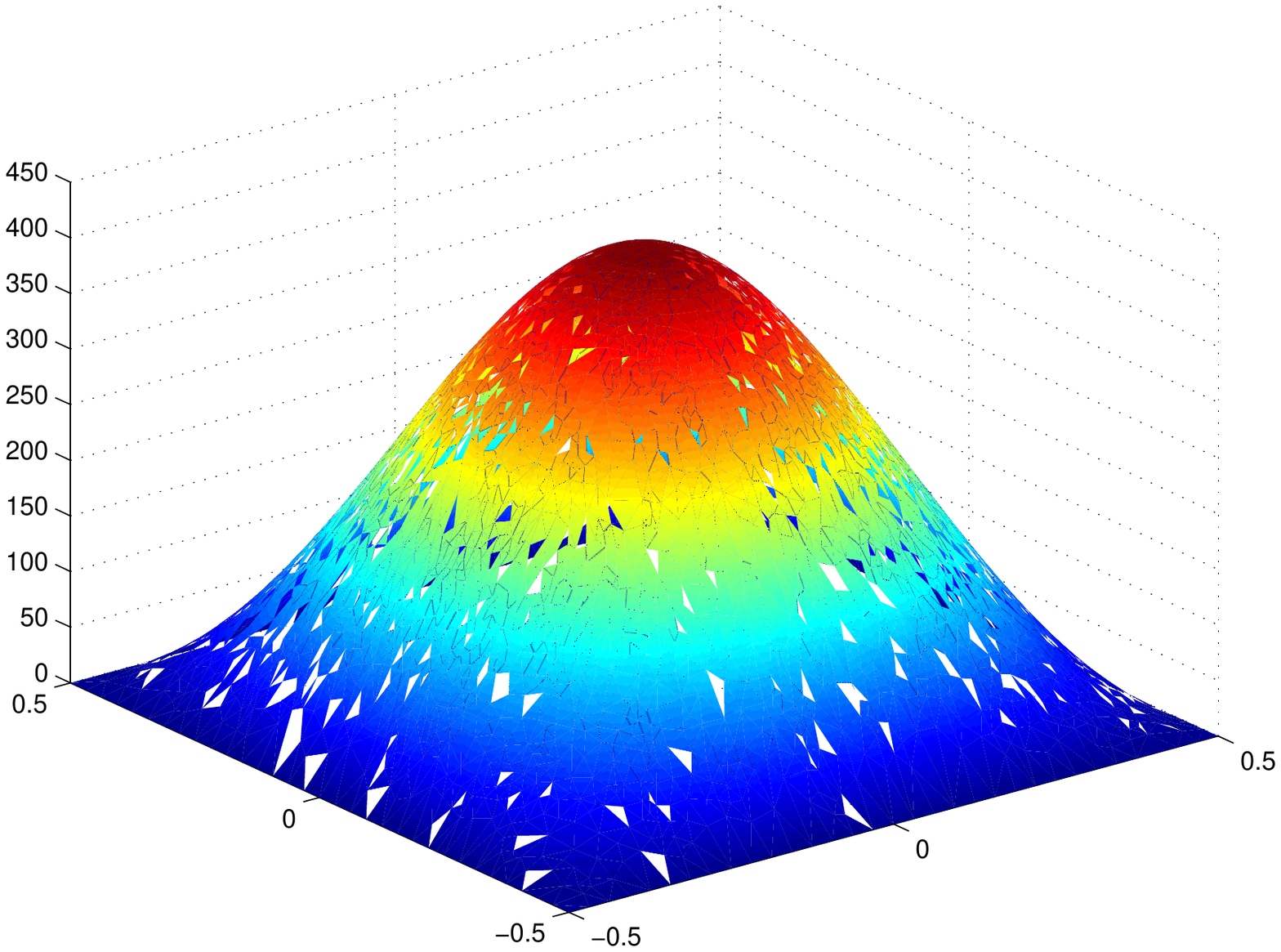}};
    \node at (0,-1) {%
      \includegraphics[width=0.25\linewidth]{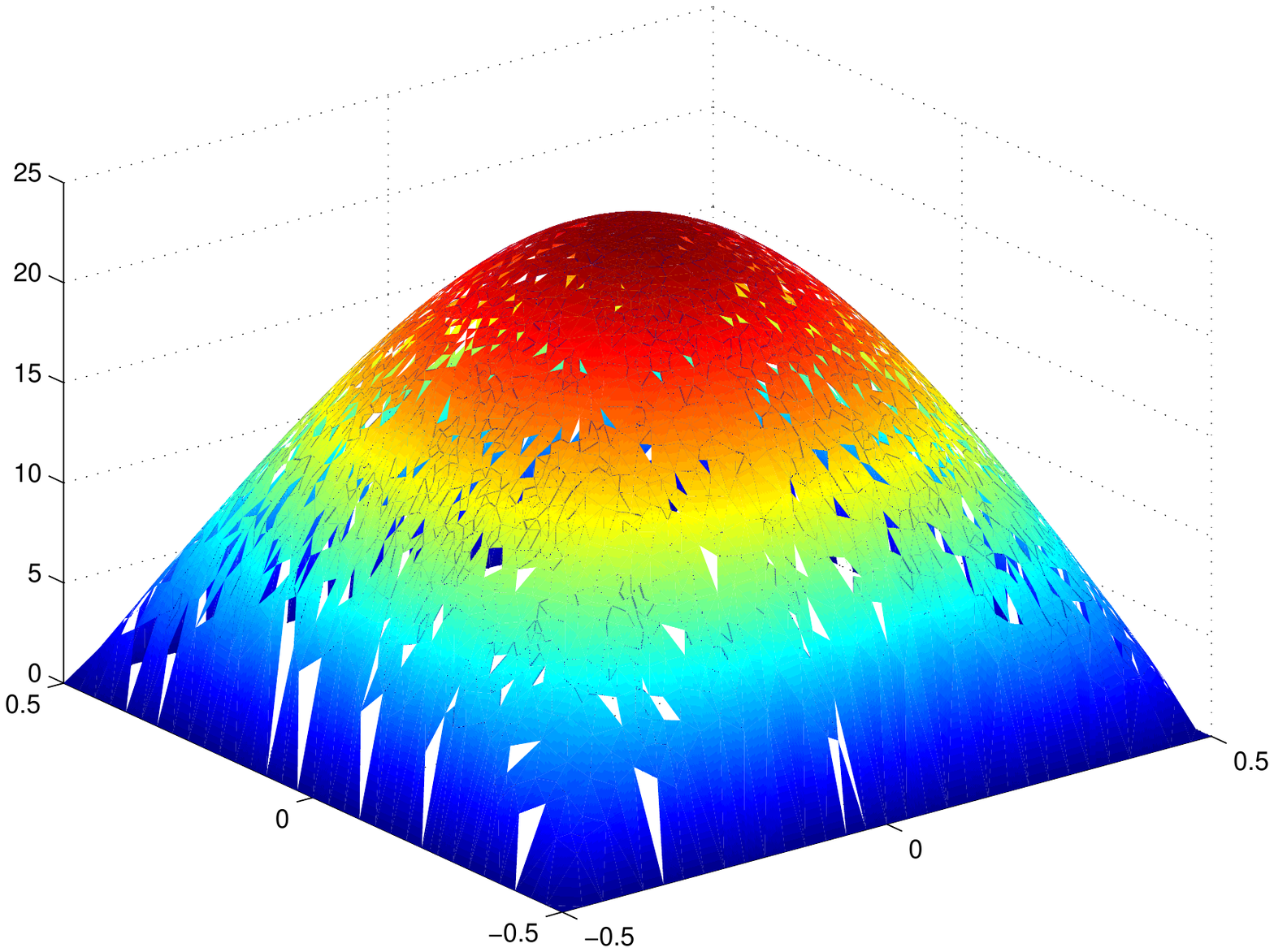}};
    \node at (0,0.5) {$R = 1$};
    \node[left] at (-0.45, 0) {$v$};
    \node[left] at (-0.45, -1) {\rotatebox{90}{$u = r(v)$}};

    \node at (1,0) {%
      \includegraphics[width=0.25\linewidth]{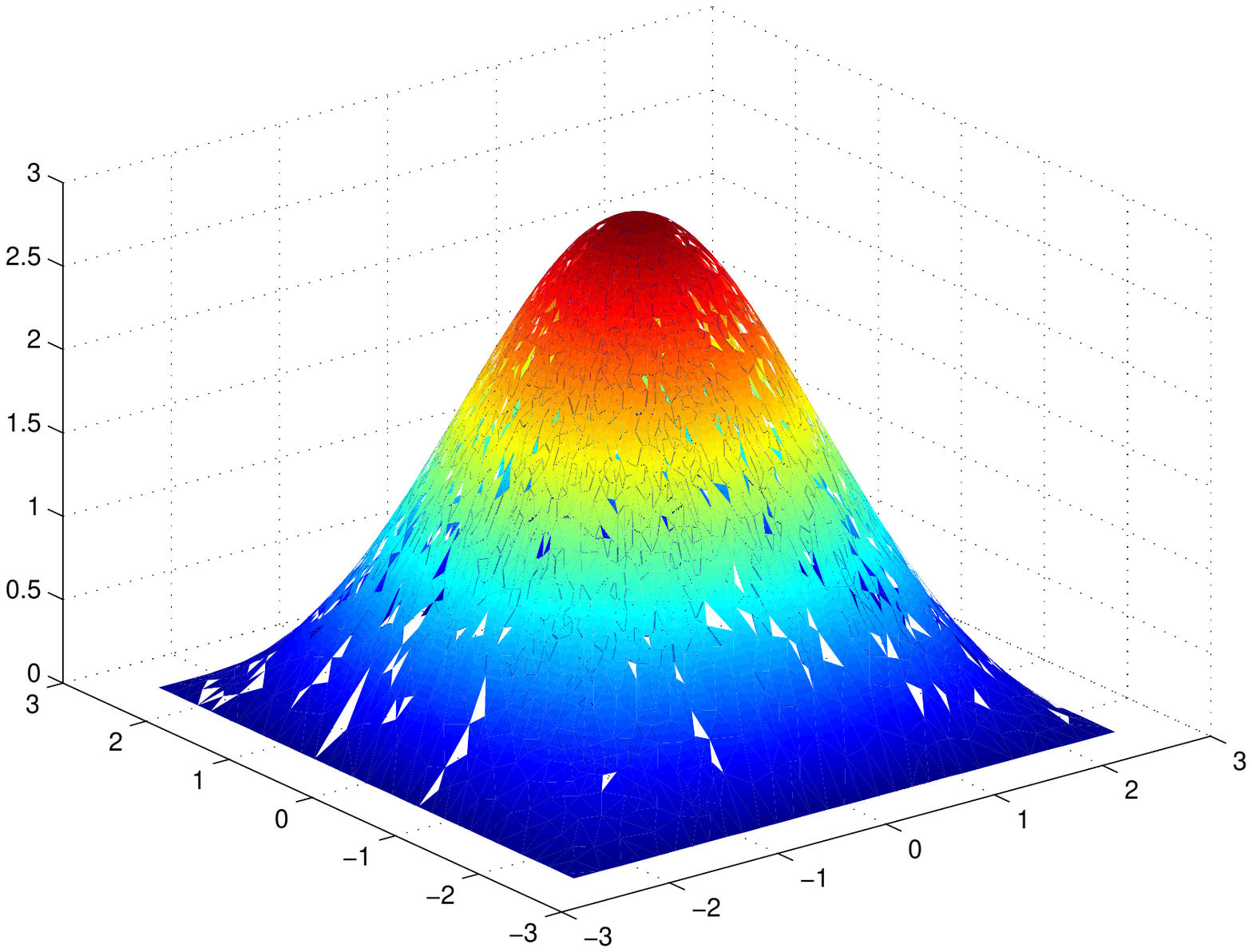}};
    \node at (1,-1) {%
      \includegraphics[width=0.25\linewidth]{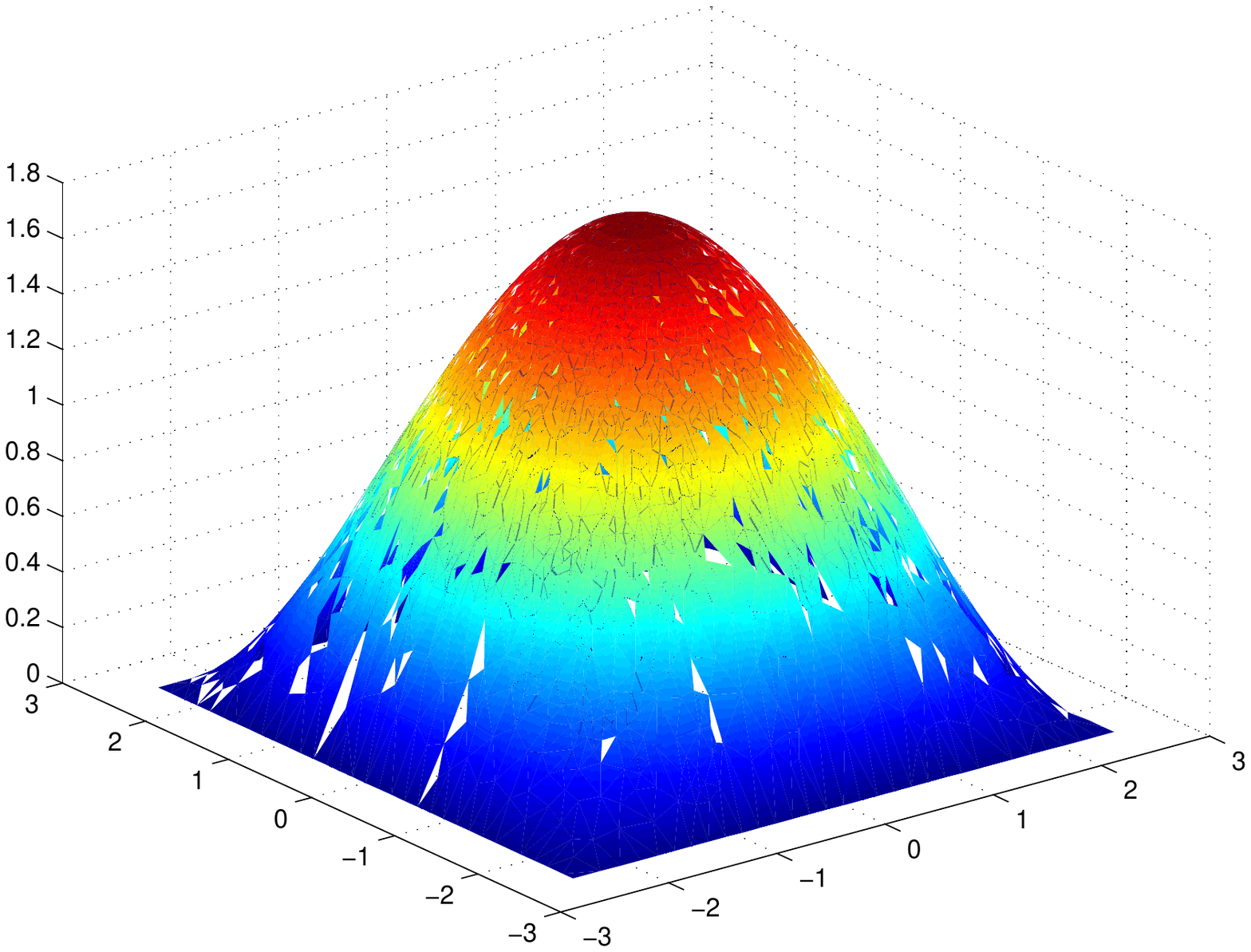}};
    \node at (1,0.5) {$R = 5$};

    \node at (2,0) {%
      \includegraphics[width=0.25\linewidth]{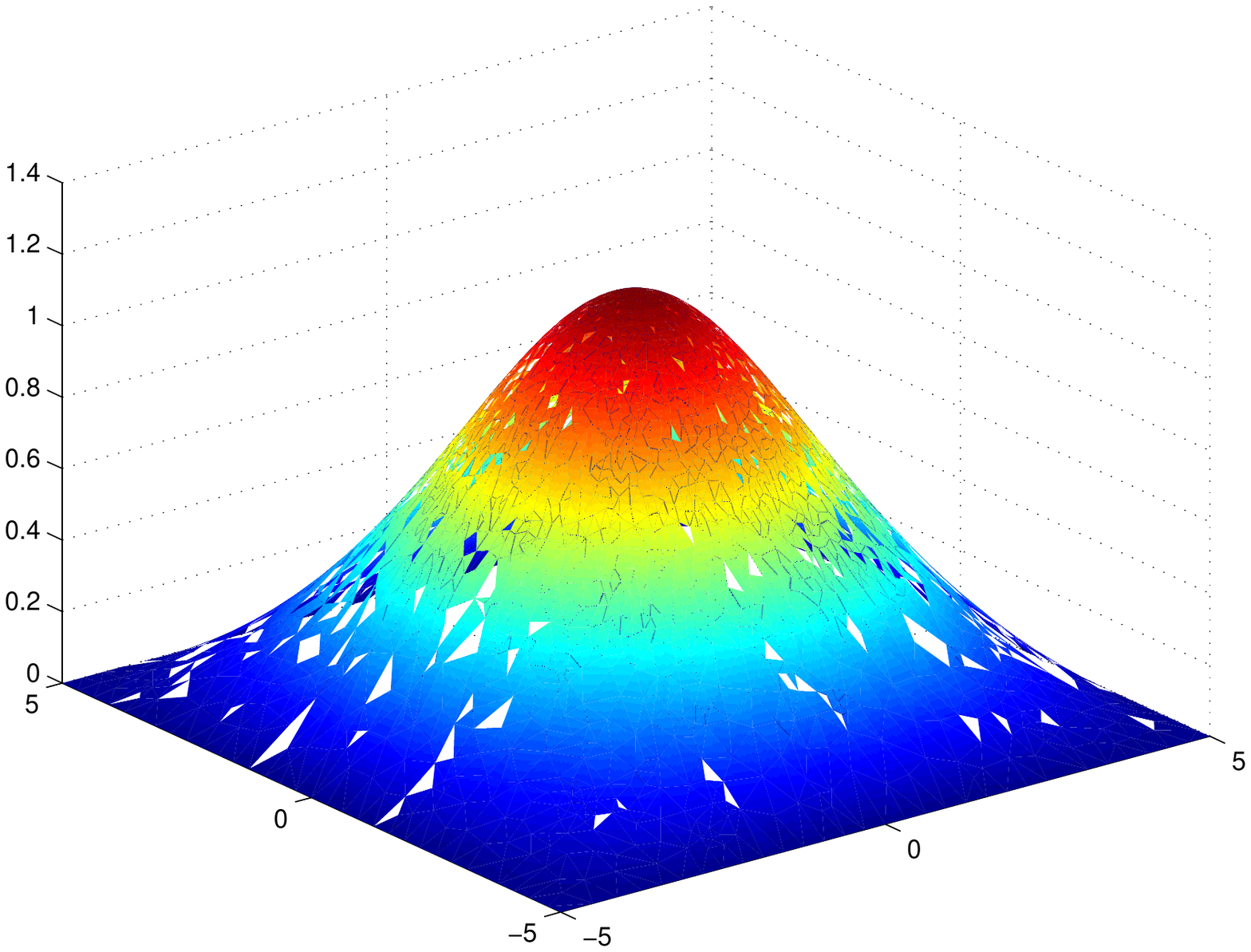}};
    \node at (2,-1) {%
      \includegraphics[width=0.25\linewidth]{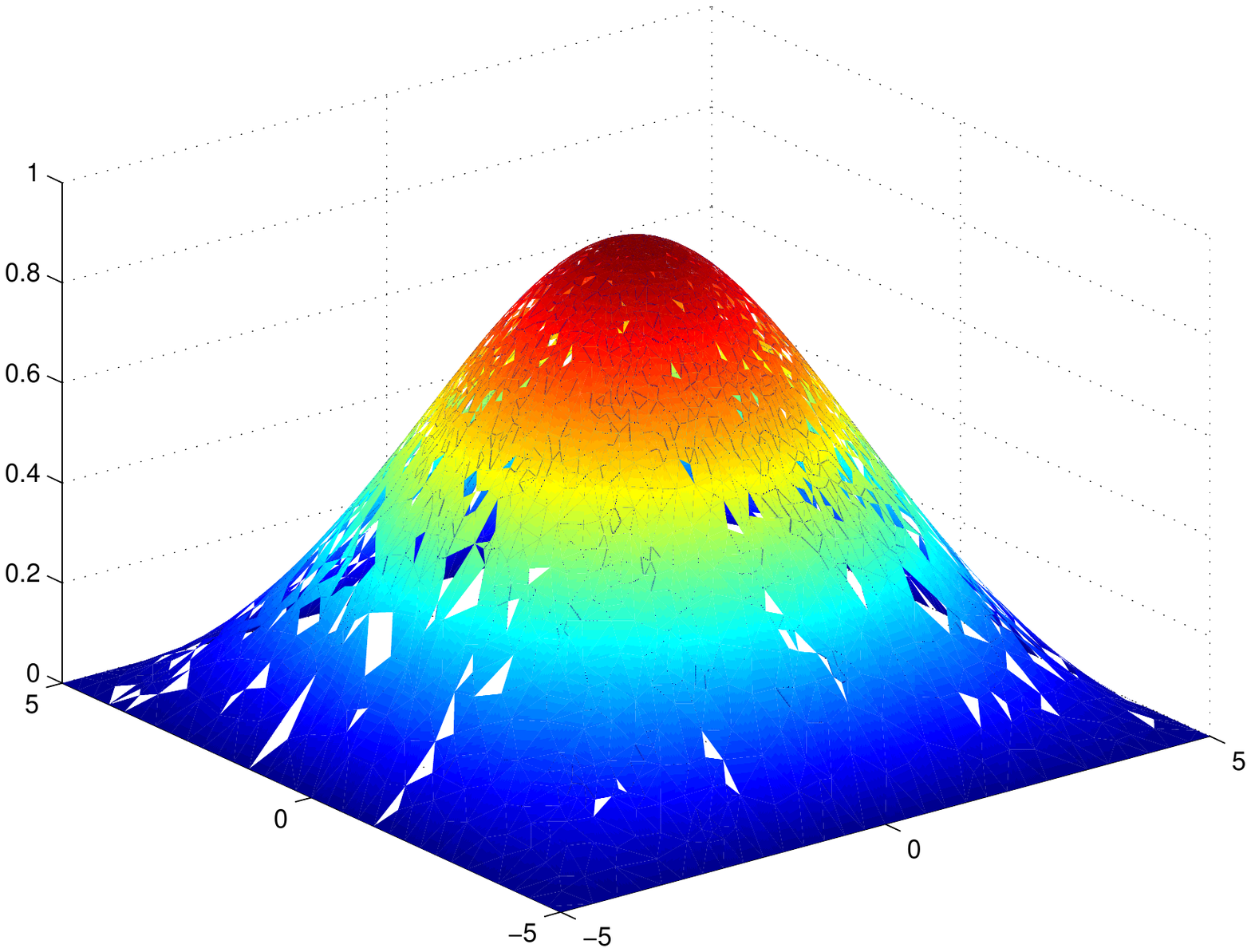}};
    \node at (2,0.5) {$R = 10$};

    \node at (3,0) {%
      \includegraphics[width=0.25\linewidth]{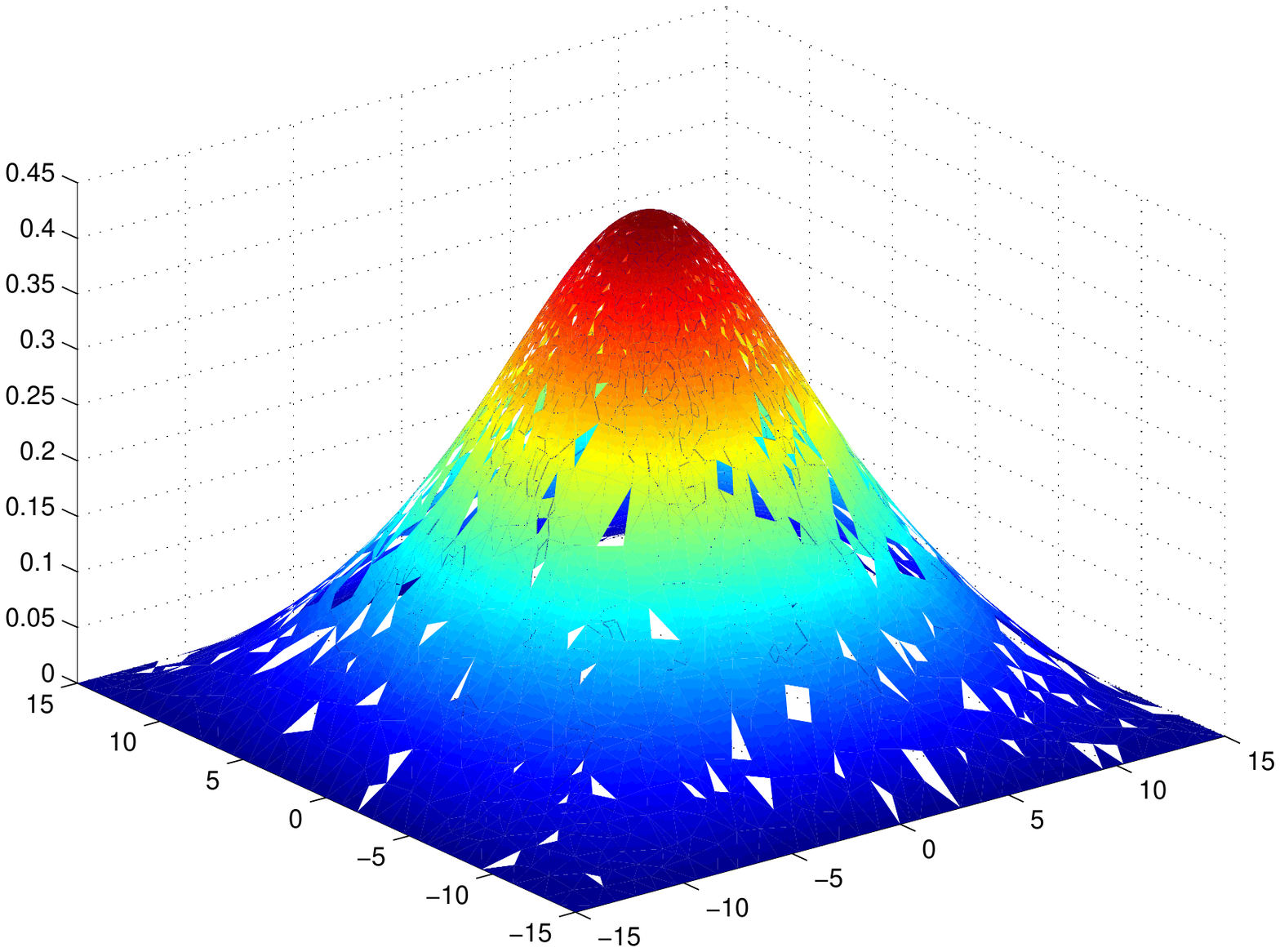}};
    \node at (3,-1) {%
      \includegraphics[width=0.25\linewidth]{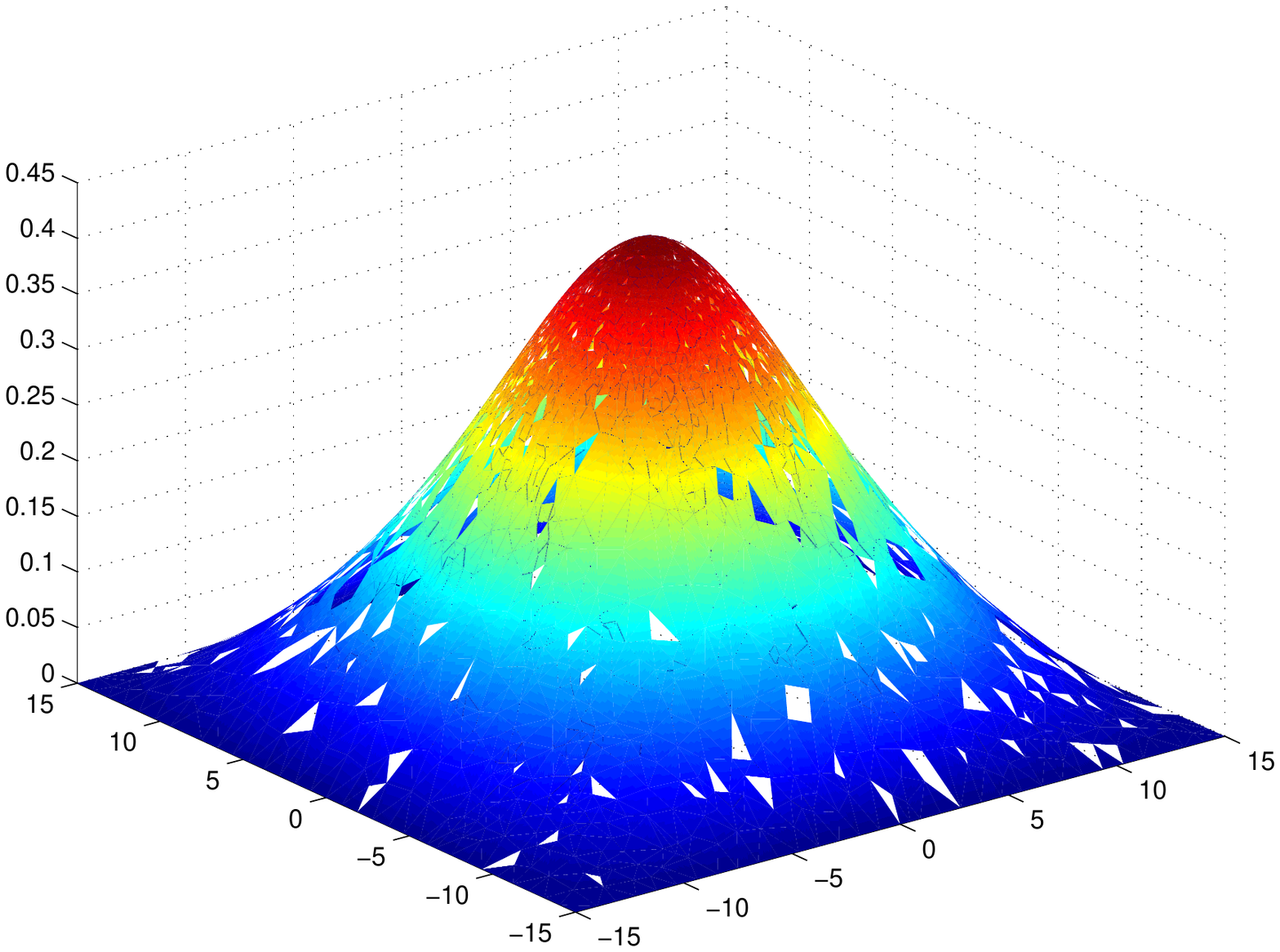}};
    \node at (3,0.5) {$R = 30$};
  \end{tikzpicture}

  \vspace*{-8ex}%
  \caption{Sol.\ $v$ to~\eqref{exis1} and $u = r(v)$ to~\eqref{gs1}
    on $\Omega_R$ for $V = 0$ and $p=4$.}
  \label{fig:square,V=0,p=4}
\end{figure}

\begin{table}[h!t]
  \begin{math}
    \begin{array}{r|cccc}
      & R = 1& R = 5& R = 10& R = 30\\
      \hline
      \max v&               415.8& 2.97&  1.17&  0.45\\
      \abs{\nabla v}_{L^2}&   875& 5.89&  2.19&  0.80\\
      \max u&                24.2& 1.77&  0.94&  0.42\\
      \mathcal T(v) =
      \mathcal E(u)&        78148&  6.75&  1.19&  0.18\\
      \norm{\nabla\mathcal{T}(v)}&
      8.2\cdot 10^{-8}& 1.4\cdot 10^{-11}& 1.6\cdot 10^{-10}& 9.6\cdot 10^{-10}
    \end{array}
  \end{math}

  \vspace{1ex}
  \caption{Characteristics of approximate solutions $v$
    to~\eqref{exis1} and $u = r(v)$ to~\eqref{gs1}
    on $\Omega_R$ for $V=0$ and $p = 4$.}
  \label{tableValues,V=0,p=4}
\end{table}

\begin{figure}[!h]
  \vspace*{-6ex}%
  \begin{tikzpicture}[x=0.25\linewidth, y=0.18\linewidth]
    \node at (0,0) {%
      \includegraphics[width=0.25\linewidth]{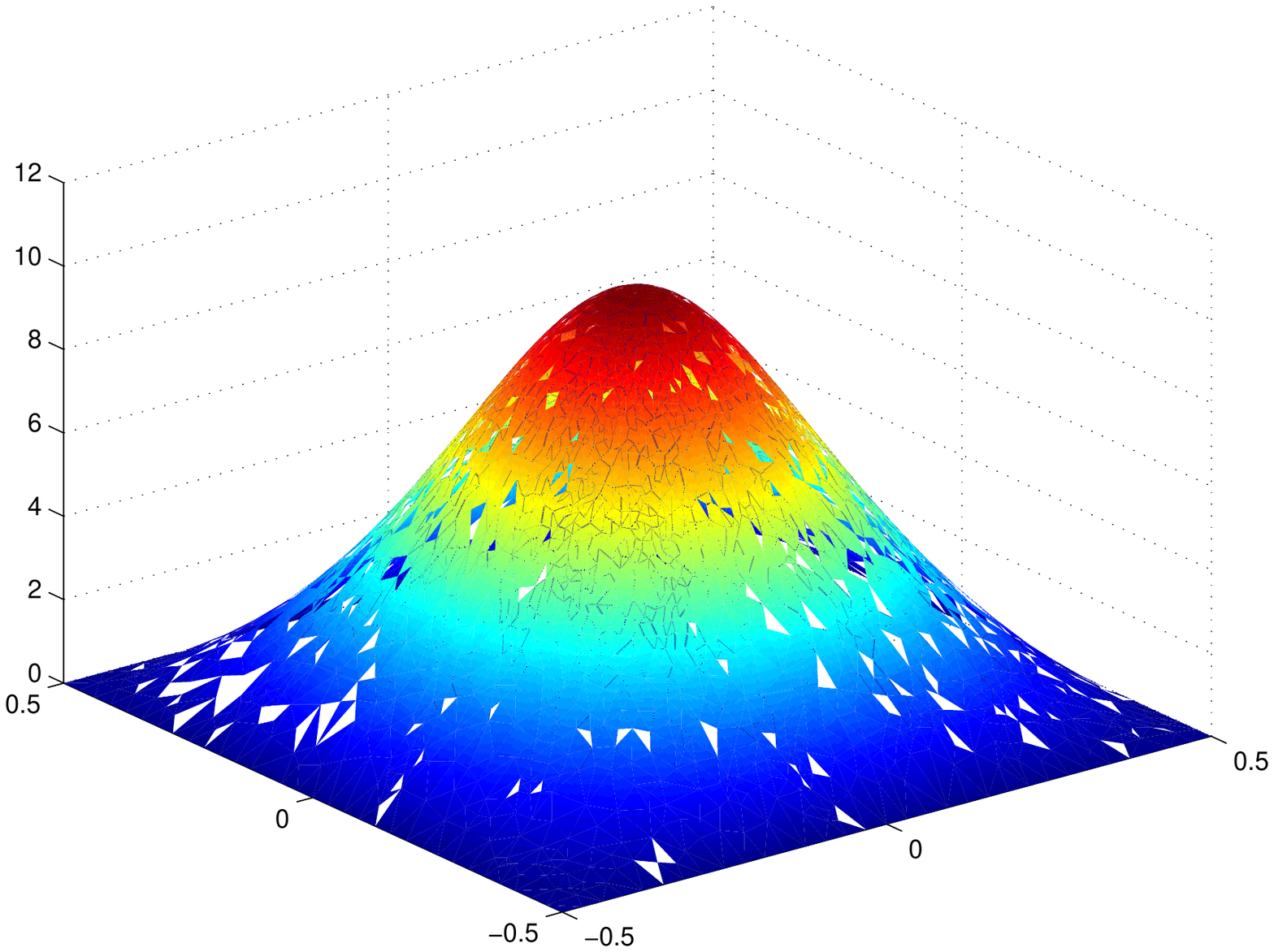}};
    \node at (0,-1) {%
      \includegraphics[width=0.25\linewidth]{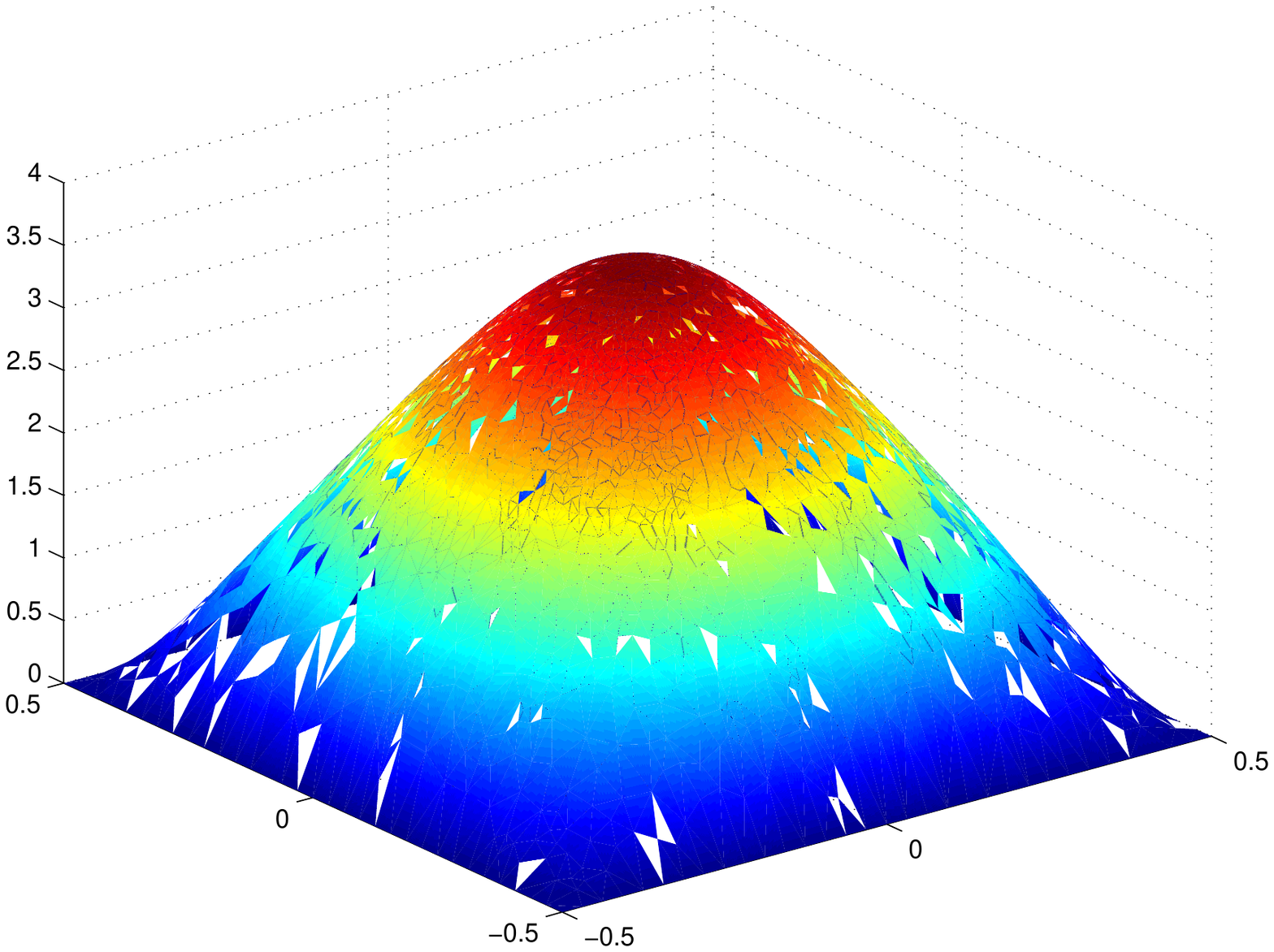}};
    \node at (0,0.5) {$R = 1$};
    \node[left] at (-0.45, 0) {$v$};
    \node[left] at (-0.45, -1) {\rotatebox{90}{$u = r(v)$}};

    \node at (1,0) {%
      \includegraphics[width=0.25\linewidth]{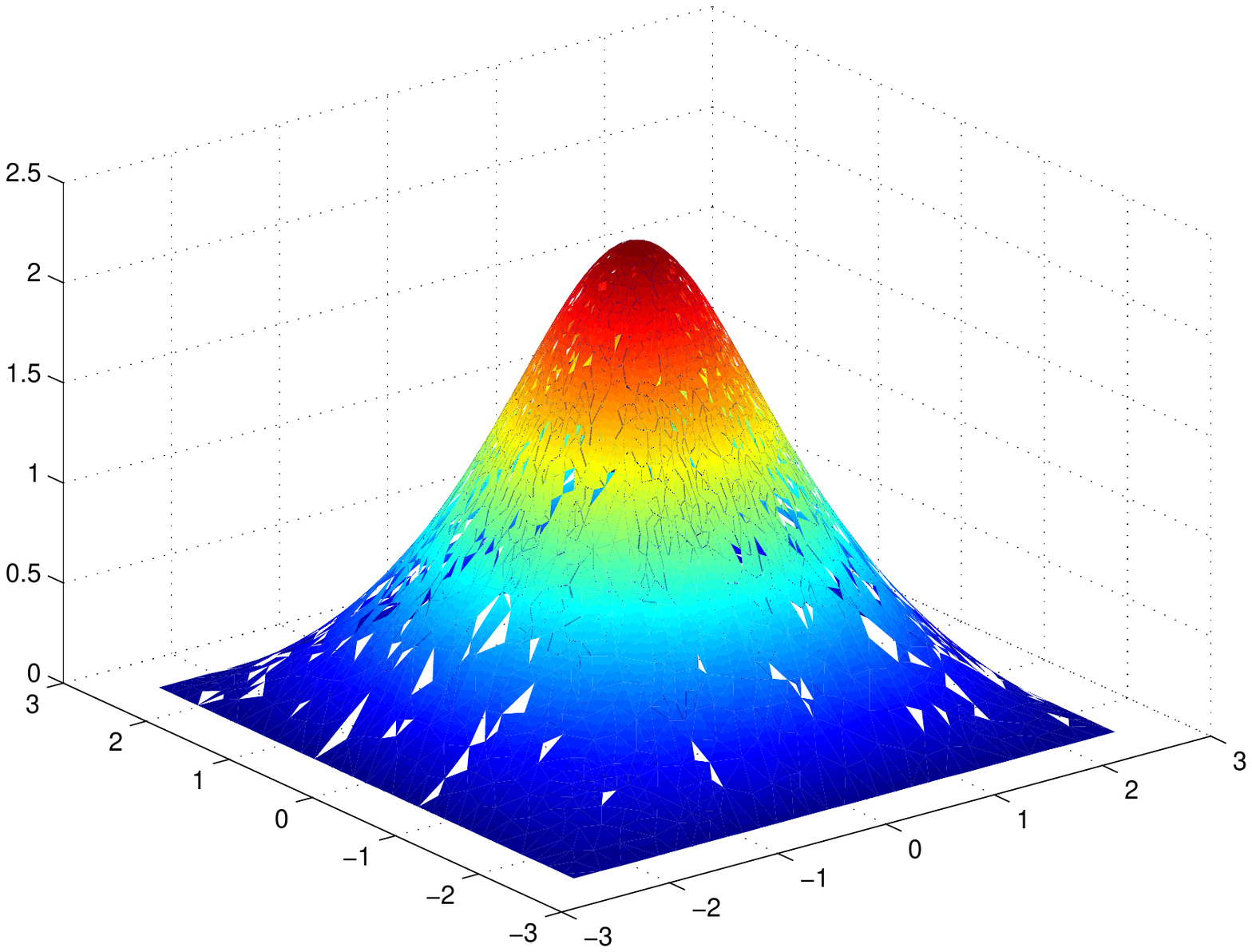}};
    \node at (1,-1) {%
      \includegraphics[width=0.25\linewidth]{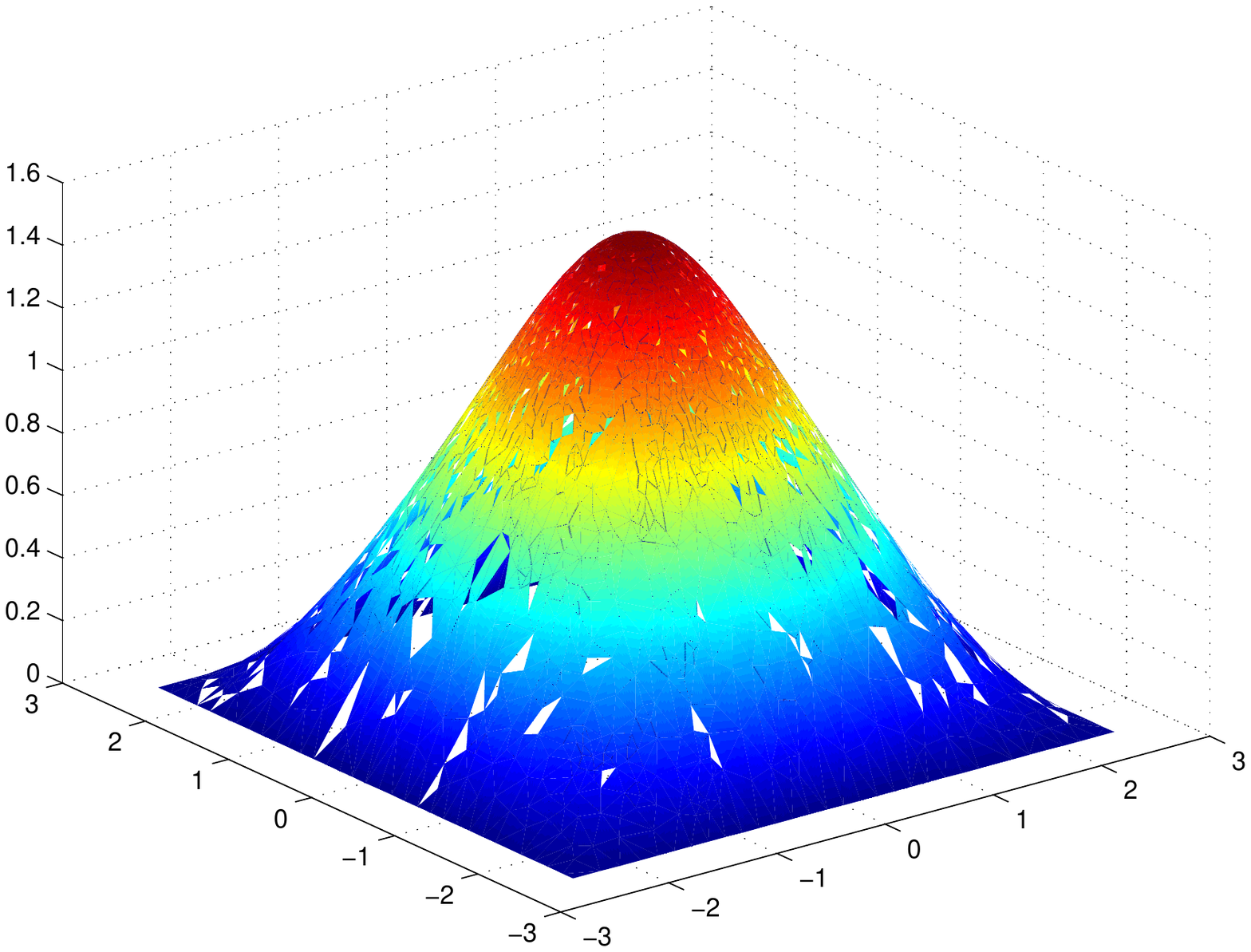}};
    \node at (1,0.5) {$R = 5$};

    \node at (2,0) {%
      \includegraphics[width=0.25\linewidth]{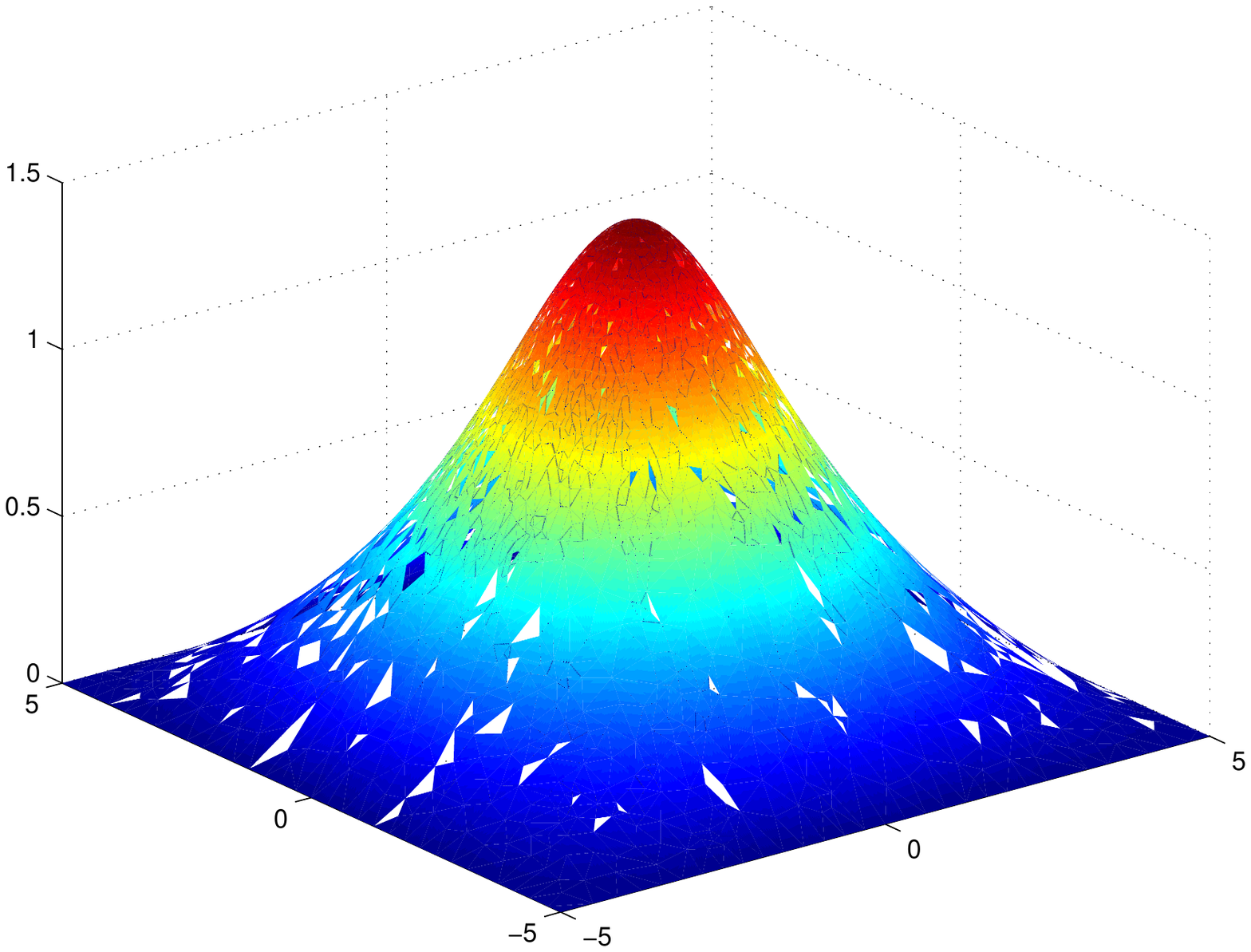}};
    \node at (2,-1) {%
      \includegraphics[width=0.25\linewidth]{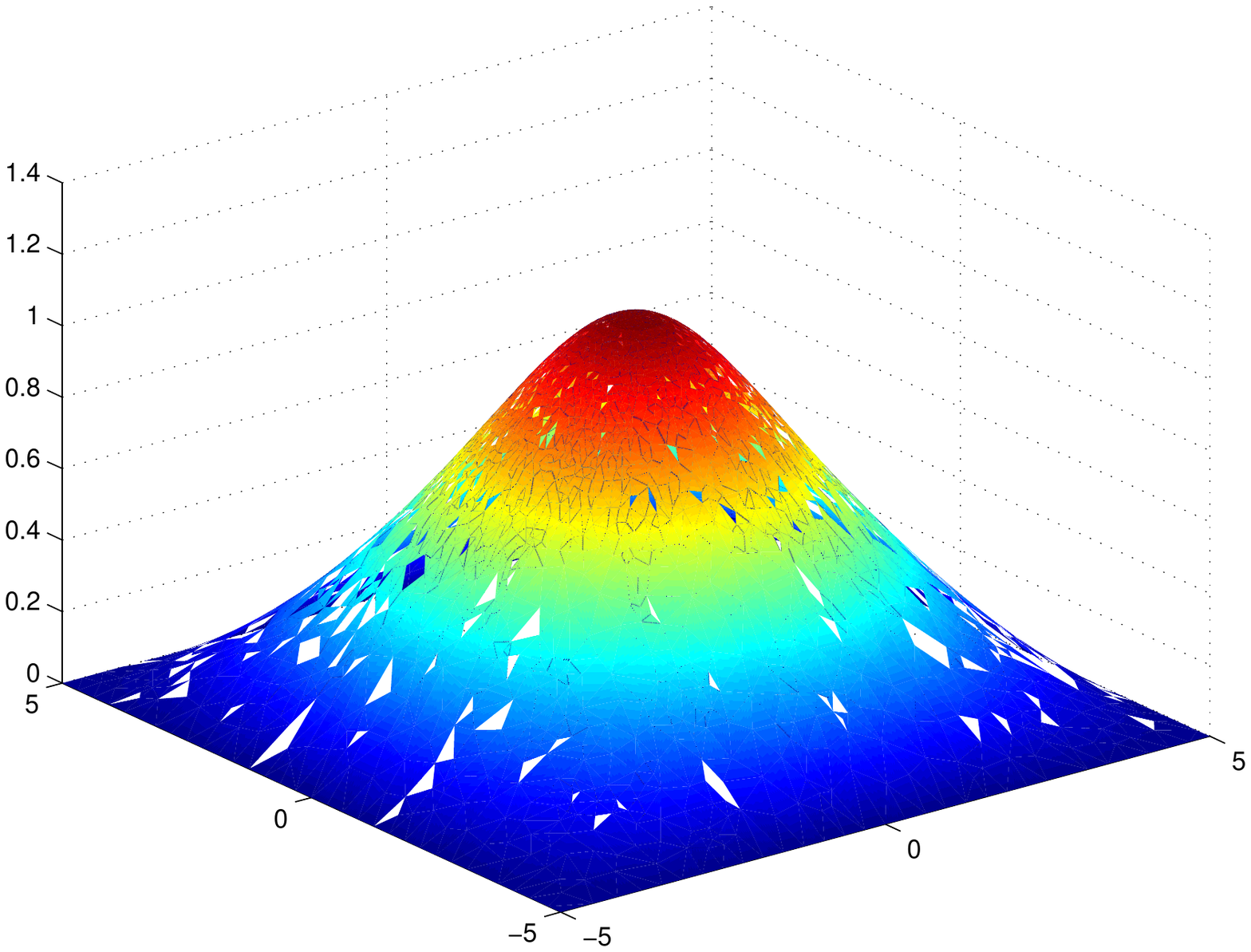}};
    \node at (2,0.5) {$R = 10$};

    \node at (3,0) {%
      \includegraphics[width=0.25\linewidth]{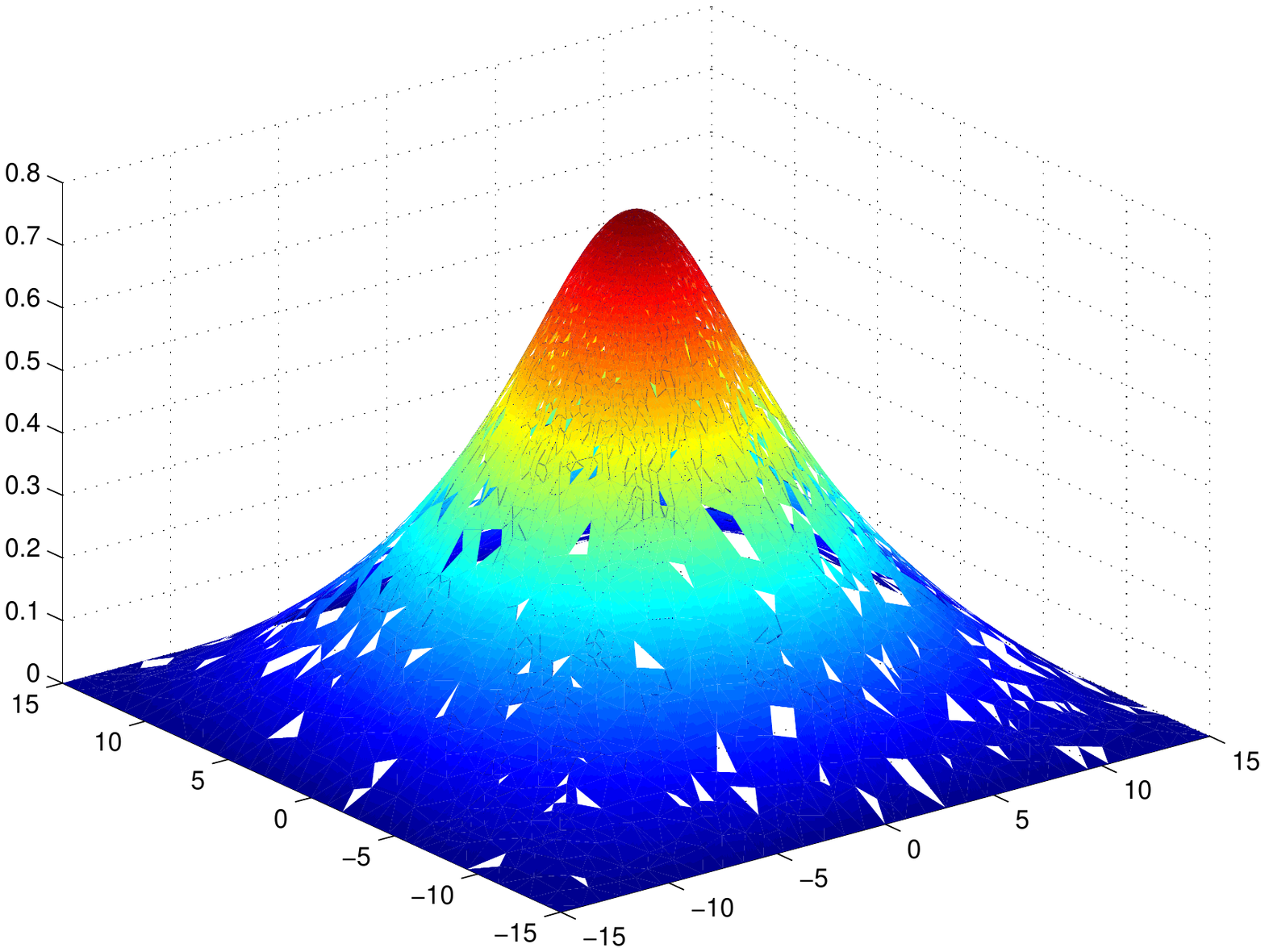}};
    \node at (3,-1) {%
      \includegraphics[width=0.25\linewidth]{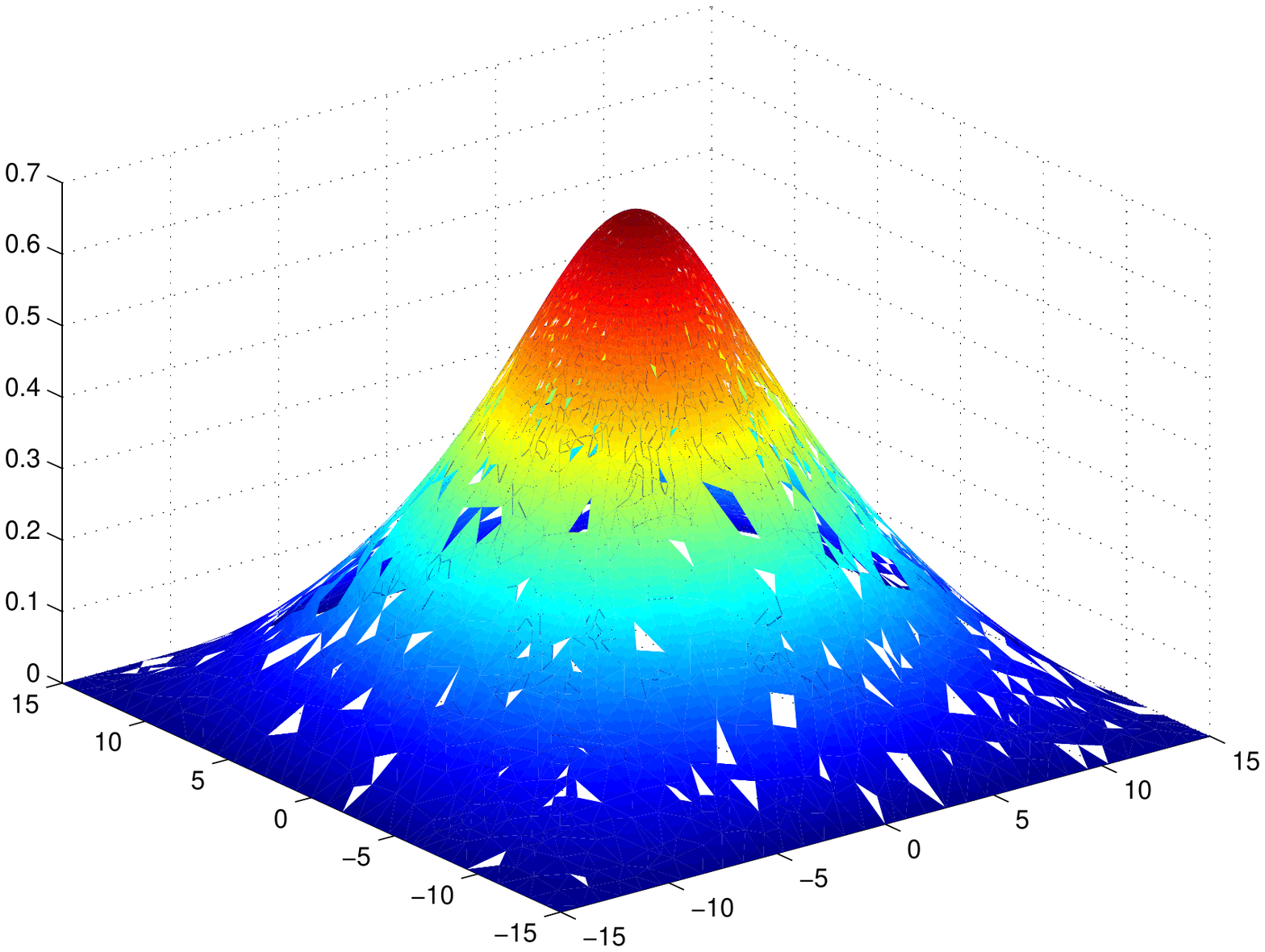}};
    \node at (3,0.5) {$R = 30$};
  \end{tikzpicture}

  \vspace*{-8ex}%
  \caption{Sol.\ $v$ to~\eqref{exis1} and $u = r(v)$ to~\eqref{gs1}
    on $\Omega_R$ for
    $V=0$ and $p=6$.}
  \label{fig:square,p=6}
\end{figure}

\begin{table}[!ht]
  \begin{math}
    \begin{array}{r|cccc}
      & R = 1& R = 5& R = 10& R = 30\\
      \hline
      \max v&               10.10&  2.34&  1.46&  0.80\\
      \abs{\nabla v}_{L^2}& 18.96&  4.15&  2.52&  1.32\\
      \max u&                3.59&  1.52&  1.11&  0.70\\
      \mathcal T(v) =
      \mathcal E(u)&        87.5&  5.00&  1.97&  0.58\\
      \norm{\nabla\mathcal{T}(v)}&
      2\cdot 10^{-8}& 4.2\cdot 10^{-11}& 3\cdot 10^{-8}& 2\cdot 10^{-9}
    \end{array}
  \end{math}

  \vspace{1ex}
  \caption{Characteristics of approximate solutions $v$
    to~\eqref{exis1} and $u = r(v)$ to~\eqref{gs1}
    on $\Omega_R$ for $V=0$ and $p = 6$.}
  \label{tableValues,V=0,p=6}
\end{table}

Then, for $p$ in the range $[4, 7]$, on 
Fig.~\ref{comparison,V=0}, we compare the energies and the
$L^\infty$-norms of the mountain pass
approximation given previously (our problem with $V =0$) with those of
the approximation  of
the problem $-\Delta u = \abs{u}^{p-1}u$. 
Equation~\eqref{exo1u} and the latter one are particular cases
of the following family.  Let $r_\delta : \R \to \R$ be the unique
solution of the Cauchy problem
\begin{equation*}
  r_\delta'(s) = \frac{1}{\sqrt{1+ \delta r^2(s)}},
  \qquad
  r_\delta(0) = 0.
\end{equation*}
% \FIXME{Note that $r_\delta(s) = \sqrt{\frac{2}{\delta}} \,
%   r(\sqrt{\frac{\delta}{2}} s)$.  Useful?}
If $v$ is a solution of~\eqref{exis1} with $r = r_\delta$, then $u =
r_\delta(v)$ is a solution to~\eqref{gs1} for $\delta = 2$ and to the
Lane-Emden equation for $\delta=0$.  On Figure~\ref{comparison,V=0},
we have in addition draw the curve for $\delta = 1$ to shed some light
on this homotopy.

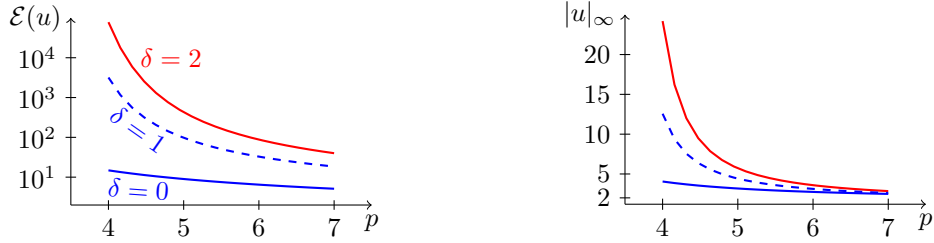
\begin{figure}[h]
  \centering
  \newcommand{\xmin}{3.5}
  \newcommand{\ymin}{0.3}
  \begin{tikzpicture}[y=3.5ex]
    \draw[->] (\xmin, \ymin) -- (7.5, \ymin) node[below]{$p$};
    \foreach \p in {4, 5, 6, 7} {%
      \draw (\p, \ymin + 0.12) -- (\p, \ymin-0.12) node[below]{$\p$};
    }
    \draw[->] (\xmin, \ymin) -- (\xmin, 5) node[left]{$\E(u)$};
    \foreach \y in {1, 2, 3, 4} {%
      \draw (\xmin + 0.07, \y) -- (\xmin - 0.07, \y) node[left]{$10^\y$};
    }
    \draw[thick,color=red] plot file{pe_V0_R1.dat};
    \node[right,color=red] at (4.3,4) {$\delta=2$};
    \draw[thick,color=blue,dashed] plot file{pe_V0_R1-delta1.dat};
    \node[color=blue, rotate=-35] at (4.4, 2.2) {$\delta=1$};
    \draw[thick,color=blue] plot file{pe_V0_R1-delta0.dat};
    \node[color=blue, rotate=-4] at (4.4, 0.7) {$\delta=0$};
  \end{tikzpicture}
  \hspace{6em}%
  \renewcommand{\ymin}{1.2}%
  \begin{tikzpicture}[y=0.7ex]
    \draw[->] (\xmin, \ymin) -- (7.5, \ymin) node[below]{$p$};
    \foreach \p in {4, 5, 6, 7} {%
      \draw (\p, \ymin + 0.7) -- (\p, \ymin - 0.7) node[below]{$\p$};
    }
    \draw[->] (\xmin, \ymin) -- (\xmin, 25) node[left]{$\abs{u}_\infty$};
    \foreach \y in {2, 5, 10,..., 23} {%
      \draw (\xmin + 0.07, \y) -- (\xmin - 0.07, \y) node[left]{$\y$};
    }
    \draw[thick,color=red] plot file{pn_V0_R1.dat};
    \draw[thick,color=blue,dashed] plot file{pn_V0_R1-delta1.dat};
    \draw[thick,color=blue] plot file{pn_V0_R1-delta0.dat};
  \end{tikzpicture}

  \vspace*{-2ex}
  \caption{Comparison between~\eqref{gs1} with $V=0$ and the problem
    $-\Delta u= \abs{u}^{p-1}u$ on $(-0.5,0.5)^2$.}
  \label{comparison,V=0}
\end{figure}

\subsection{Non-zero potentials}

In this section, we study the case $V =10$ and $p=4$ or $6$. We repeat
the experiments above, see Figures~\ref{fig:square,V=10,p=4}
and~\ref{fig:square,V=10,p=6} as well as
Tables~\ref{tableValues,V=10,p=4}
and~\ref{tableValues,V=10,p=6}.
As before, we also compare the energies and the $L^\infty$-norms of
the mountain pass approximation given previously (our problem with
$V=10$) with those of the approximation of the problem $-\Delta u +10u
= \abs{u}^{p-1}u$ for $p \in [4, 7]$, see Fig.~\ref{comparison,V=10}.
This time, when $R \to \infty$, the solutions no longer vanish but
seem to converge to a non-trivial positive solution on $\R^N$.
This is what is expected in view of the
argumentation in section~\ref{sec:conv-MP}.

We conclude this section by examining the behavior of the solutions as
the potential $V$ goes to~$0$.  Figure~\ref{V->0,p=4} depicts the
energy of the solution to~\eqref{gs1} obtained by the Mountain Pass
Algorithm on $\Omega_R$ with $R = 10$ for various values of~$V$.
It clearly suggests that $\mathcal{E}(u) \to 0$ as $V \to 0$,
indicating that the ground-state solutions bifurcate from $0$
as $V$ approaches the bottom of the spectrum.

\begin{figure}[h!t]
  \vspace*{-6ex}%
  \begin{tikzpicture}[x=0.25\linewidth, y=0.18\linewidth]
    \node at (0,0) {%
      \includegraphics[width=0.25\linewidth]{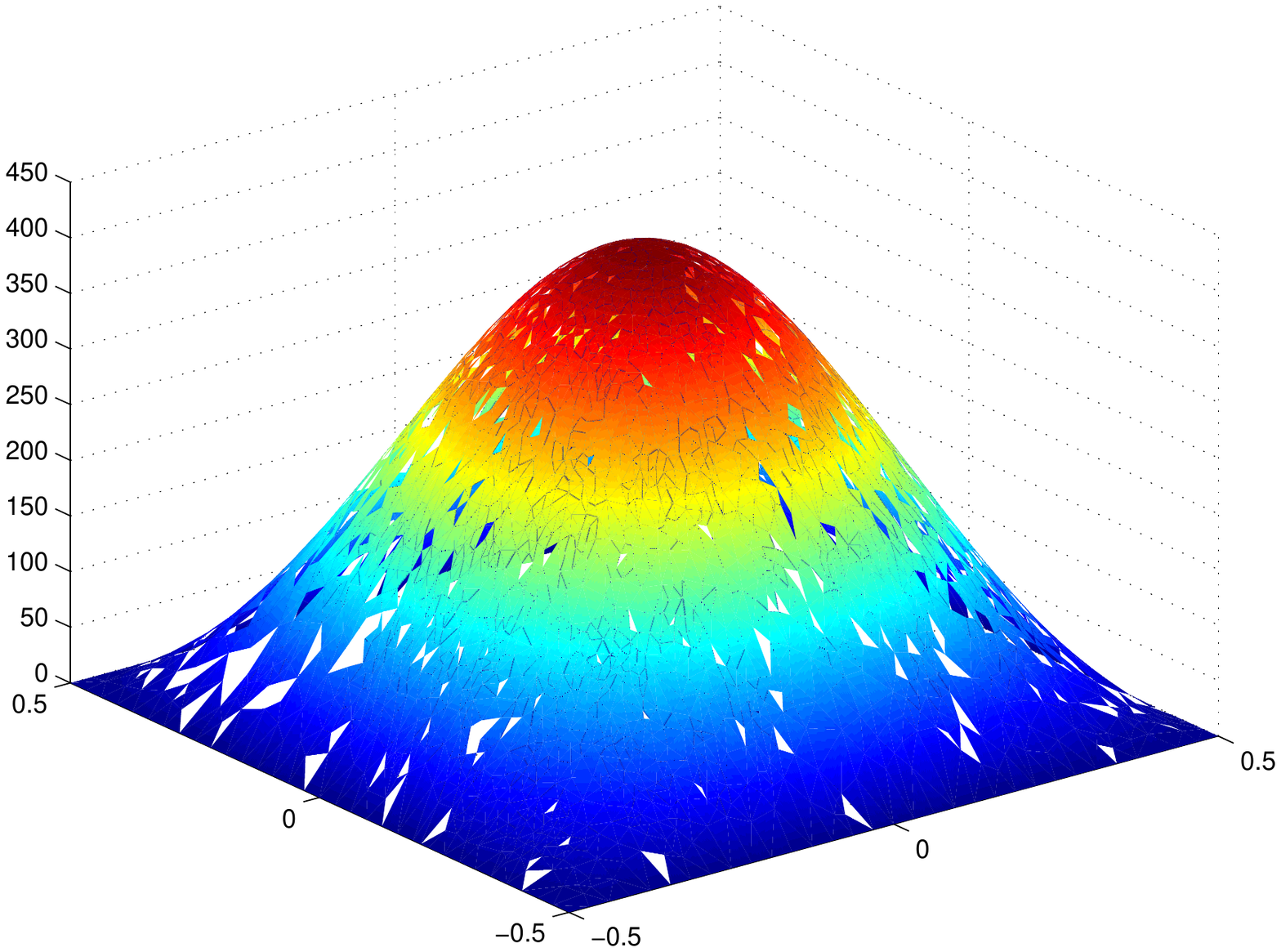}};
    \node at (0,-1) {%
      \includegraphics[width=0.25\linewidth]{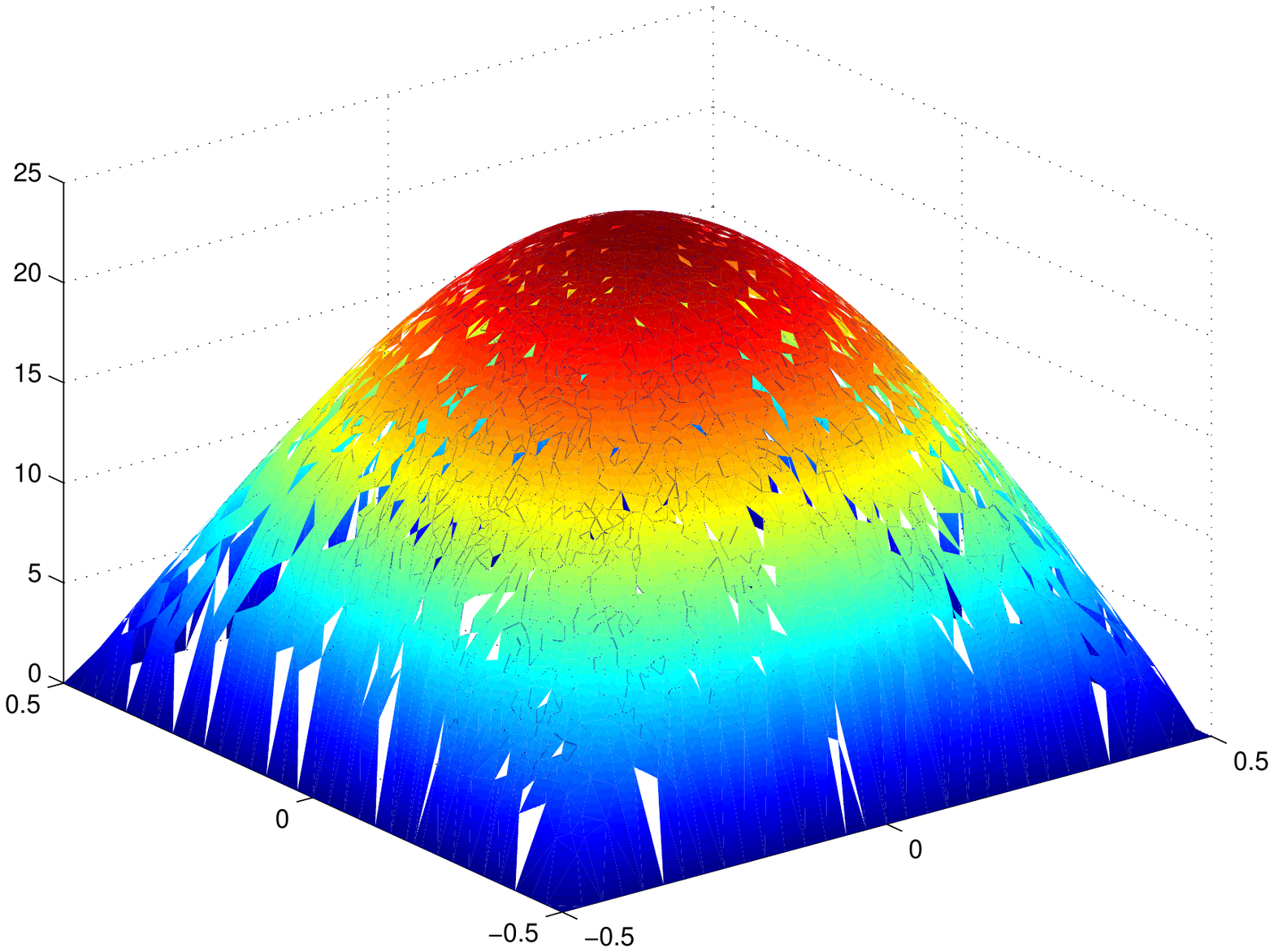}};
    \node at (0,0.5) {$R = 1$};
    \node[left] at (-0.45, 0) {$v$};
    \node[left] at (-0.45, -1) {\rotatebox{90}{$u = r(v)$}};

    \node at (1,0) {%
      \includegraphics[width=0.25\linewidth]{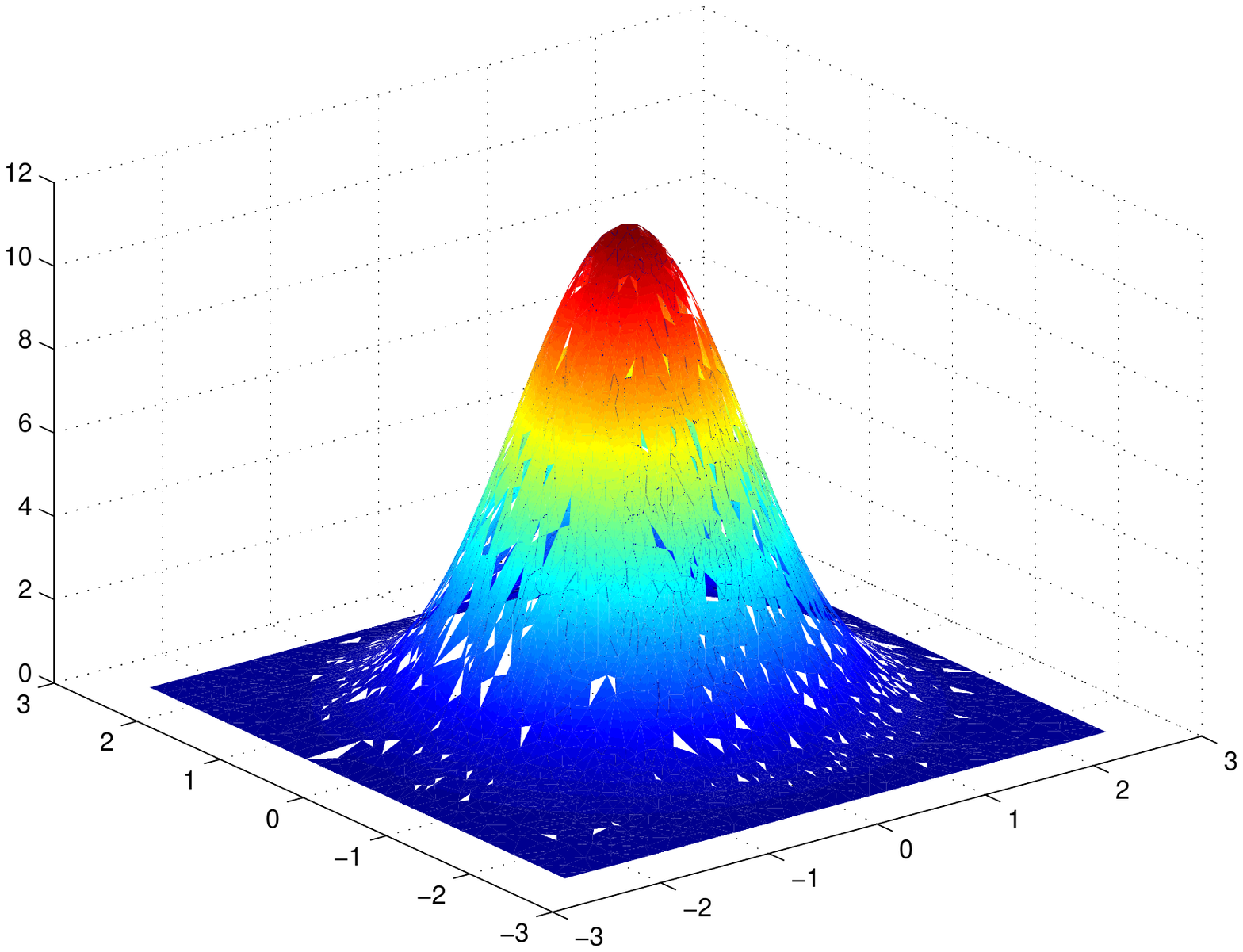}};
    \node at (1,-1) {%
      \includegraphics[width=0.25\linewidth]{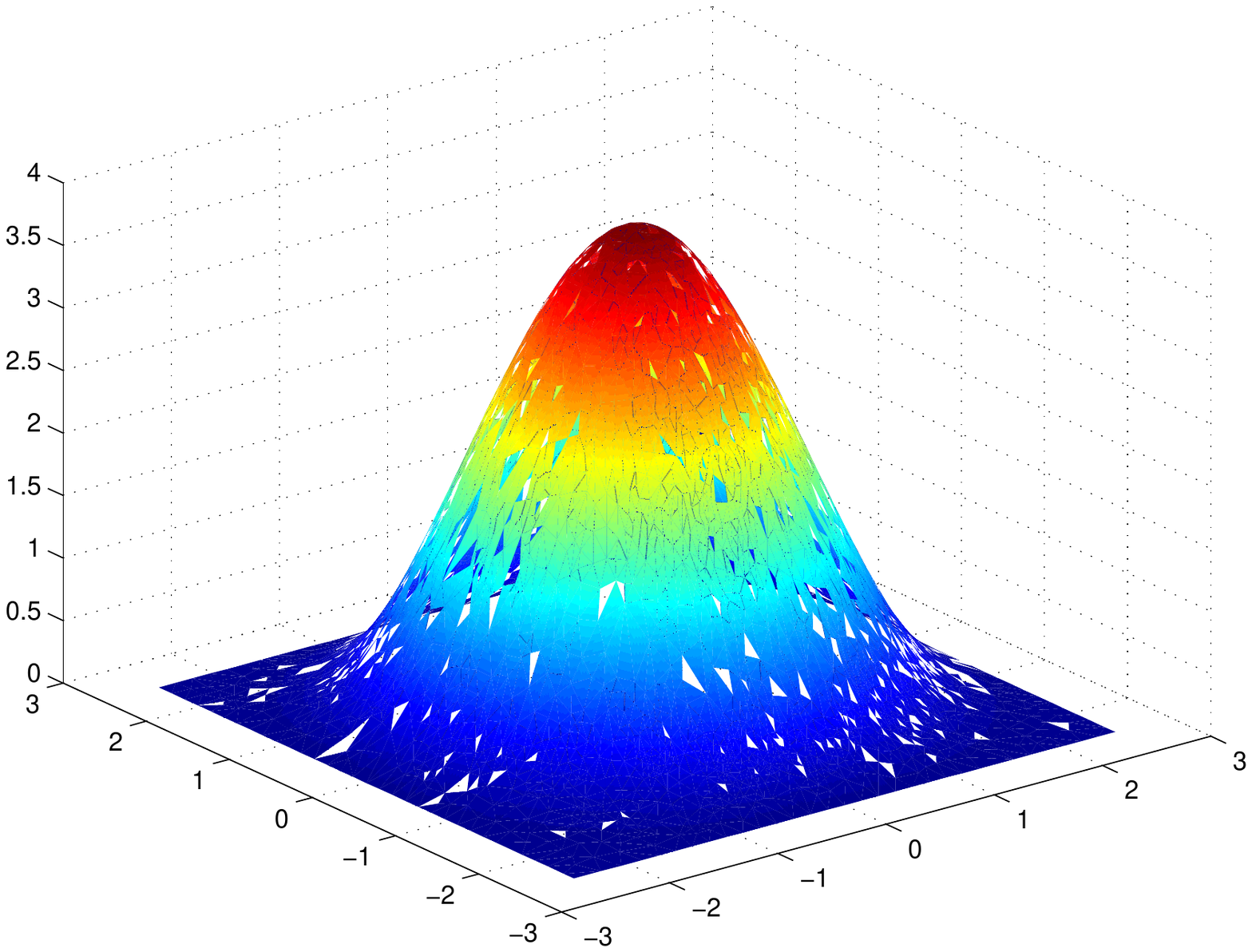}};
    \node at (1,0.5) {$R = 5$};

    \node at (2,0) {%
      \includegraphics[width=0.25\linewidth]{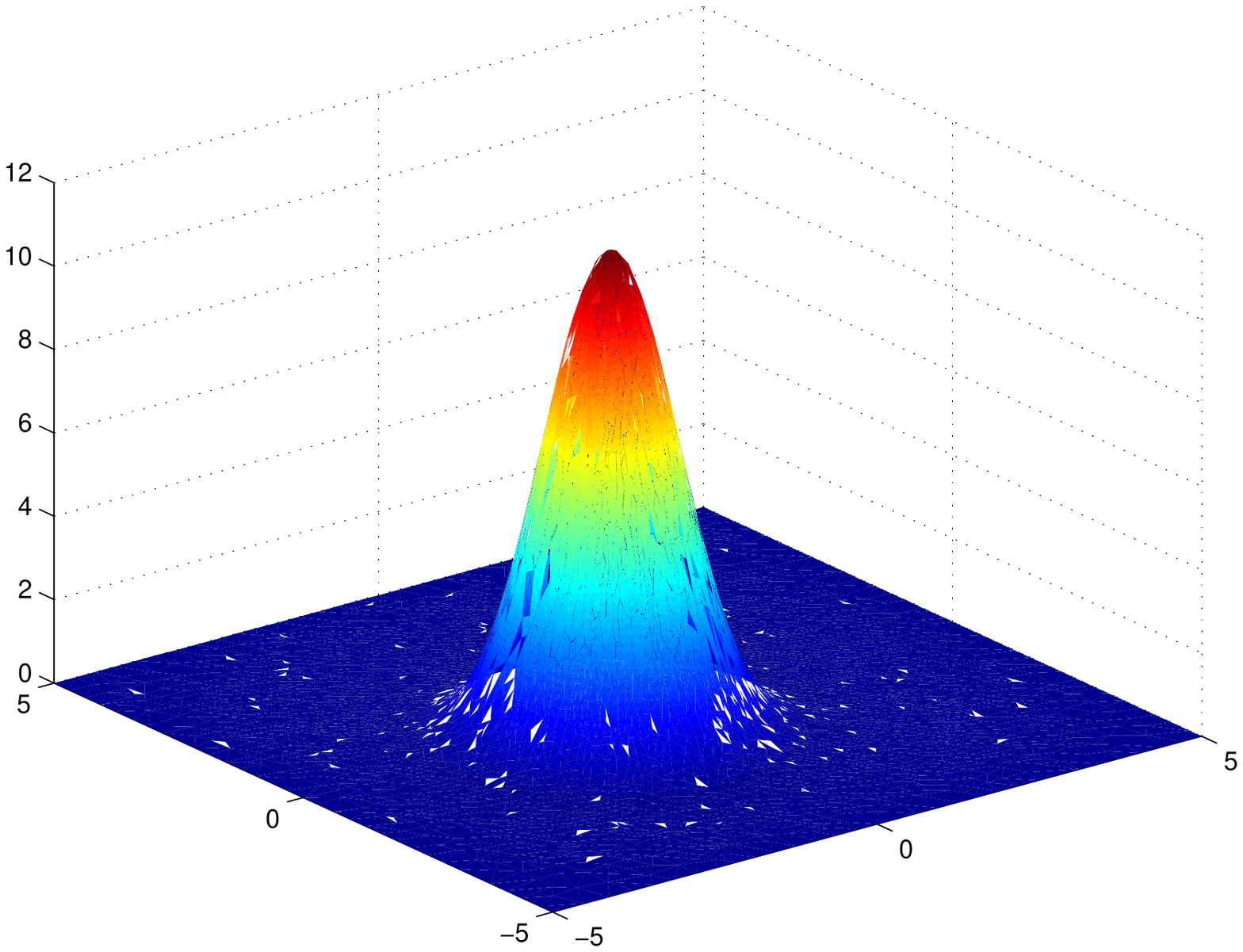}};
    \node at (2,-1) {%
      \includegraphics[width=0.25\linewidth]{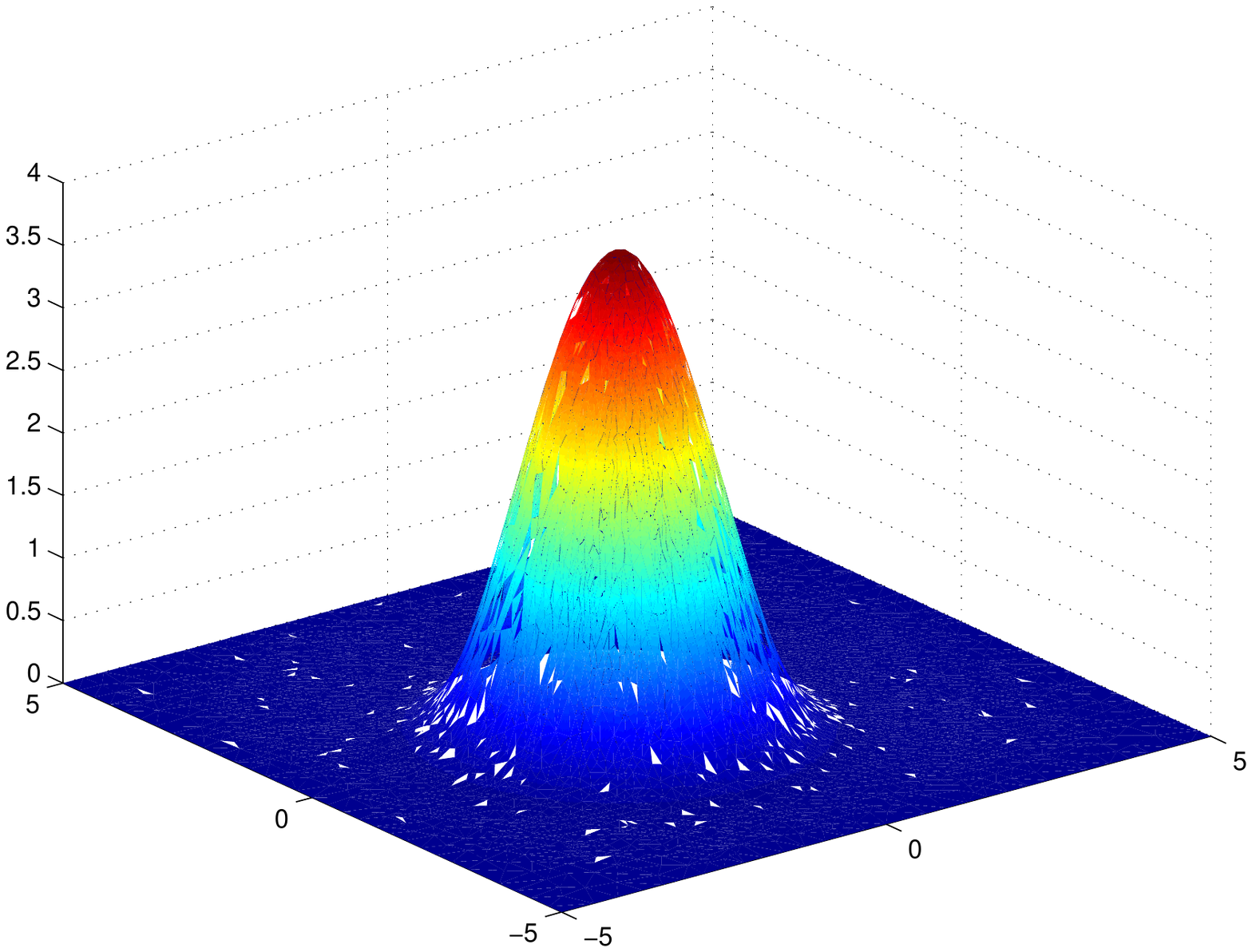}};
    \node at (2,0.5) {$R = 10$};

    \node at (3,0) {%
      \includegraphics[width=0.25\linewidth]{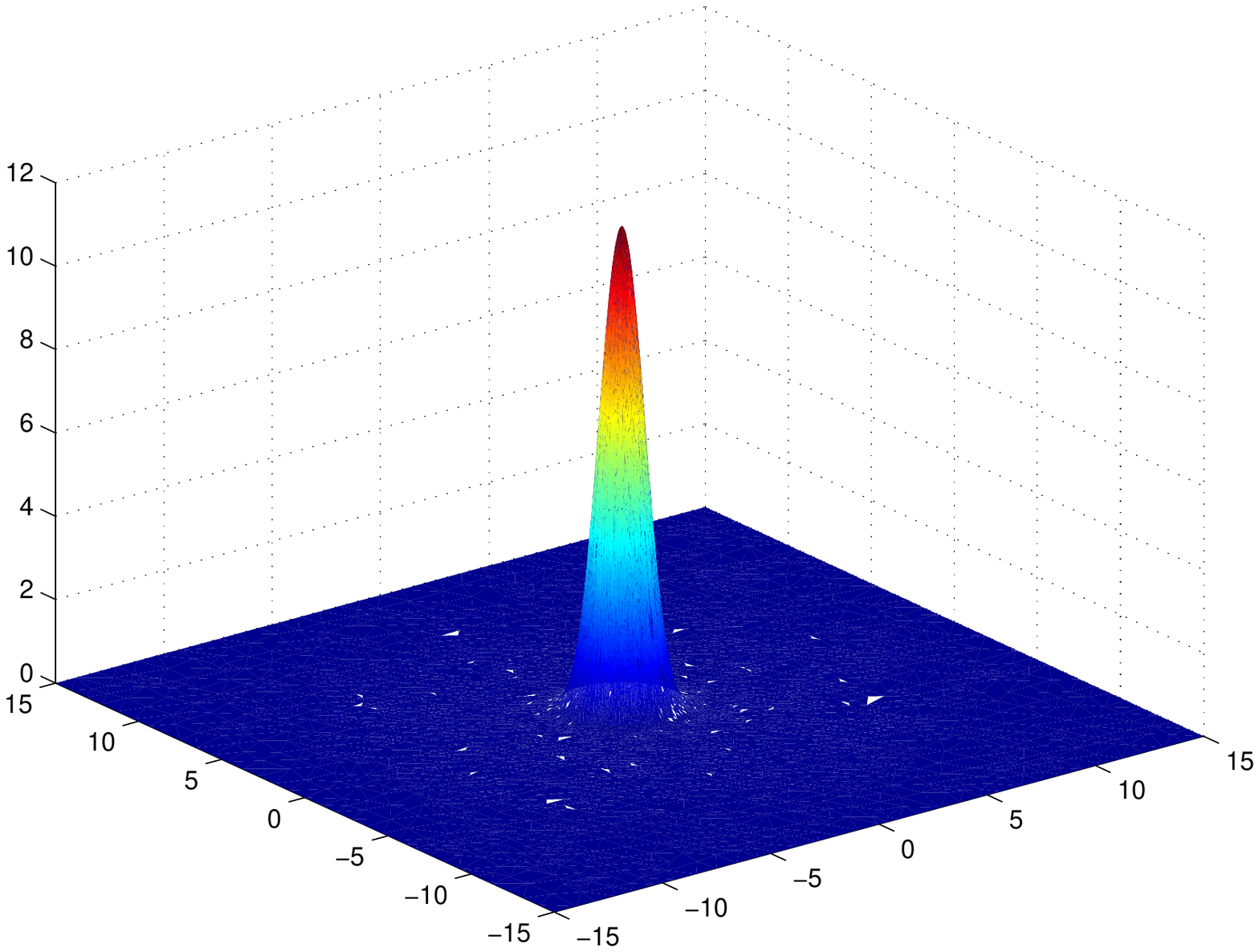}};
    \node at (3,-1) {%
      \includegraphics[width=0.25\linewidth]{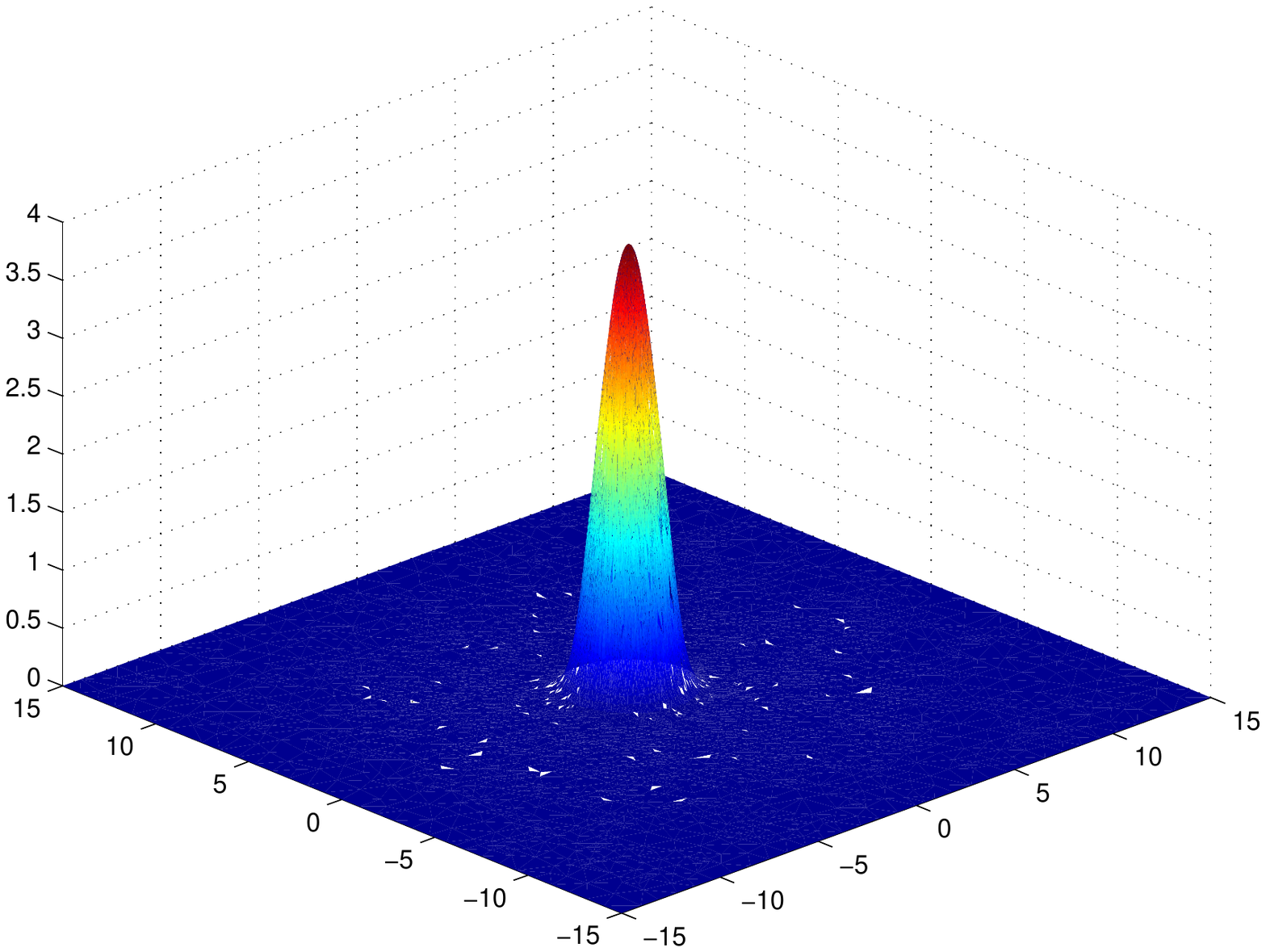}};
    \node at (3,0.5) {$R = 30$};
  \end{tikzpicture}

  \vspace*{-8ex}%
  \caption{Sol.\ $v$ to~\eqref{exis1} and $u = r(v)$ to~\eqref{gs1}
    on $\Omega_R$ for $V=10$ and $p=4$.}
  \label{fig:square,V=10,p=4}
\end{figure}

\begin{table}[ht]
  \begin{math}
    \begin{array}{r|cccc}
      & R = 1& R = 5& R = 10& R = 30\\
      \hline
      \max v&               417&   11.6&  11.6& 11.6\\
      \abs{\nabla v}_{L^2}& 877&   20.9&  20.8& 20.8\\
      \max u&               24.24& 3.87&  3.87& 3.87\\
      \mathcal T(v) =
      \mathcal E(u)&        79183&  217& 217& 216.7\\
      \norm{\nabla\mathcal{T}(v)}&
      2\cdot 10^{-8}& 4\cdot 10^{-10}&2\cdot 10^{-8}& 1.6\cdot 10^{-8}
    \end{array}
  \end{math}

  \vspace{1ex}
  \caption{Characteristics of approximate solutions $v$
    to~\eqref{exis1} and $u = r(v)$ to~\eqref{gs1} on $\Omega_R$
    for $V =10$ and $p = 4$.}
  \vspace{-1ex}
  \label{tableValues,V=10,p=4}
\end{table}

\begin{figure}[ht]
  \vspace*{-6ex}%
  \begin{tikzpicture}[x=0.25\linewidth, y=0.18\linewidth]
    \node at (0,0) {%
      \includegraphics[width=0.25\linewidth]{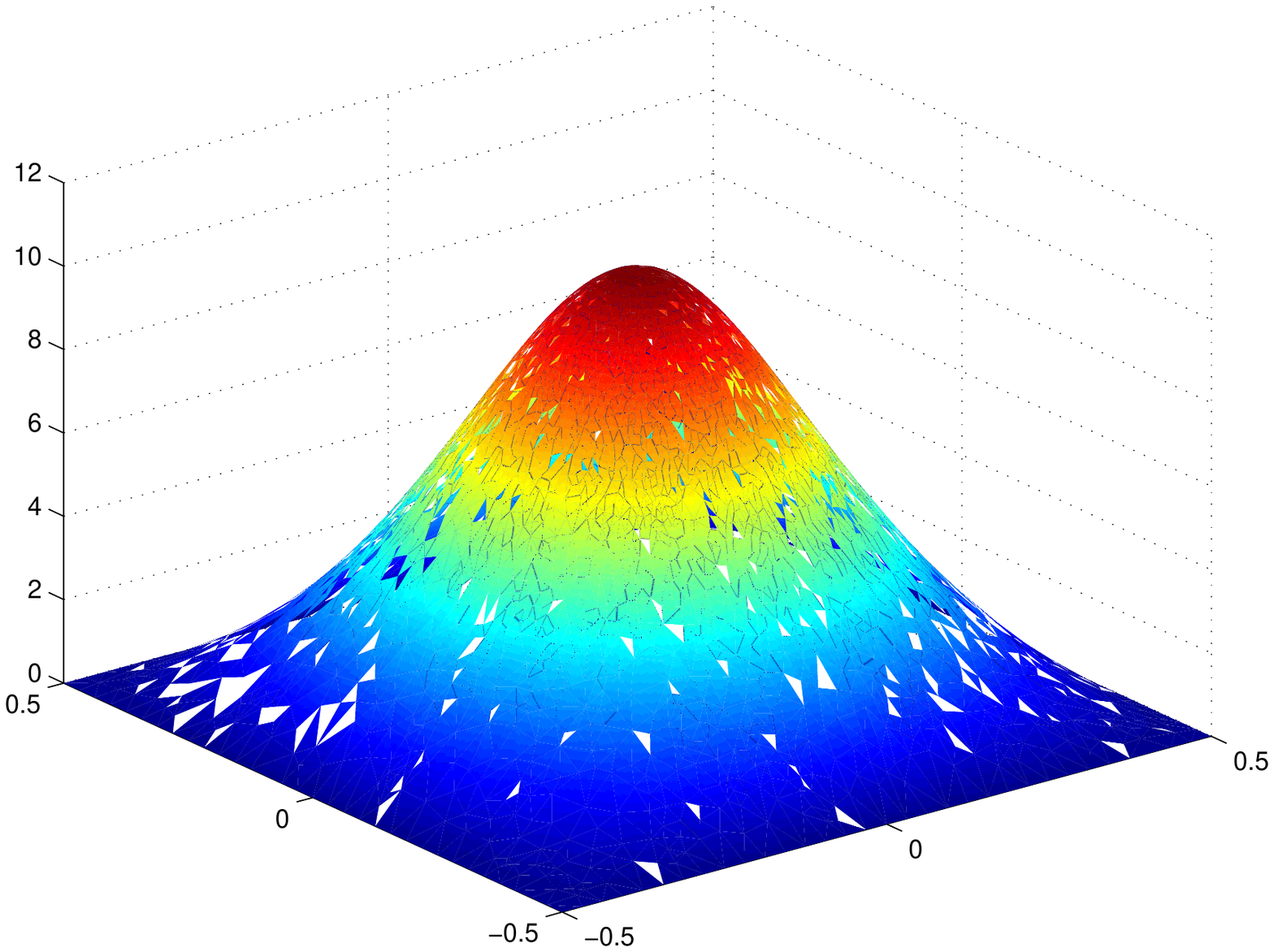}};
    \node at (0,-1) {%
      \includegraphics[width=0.25\linewidth]{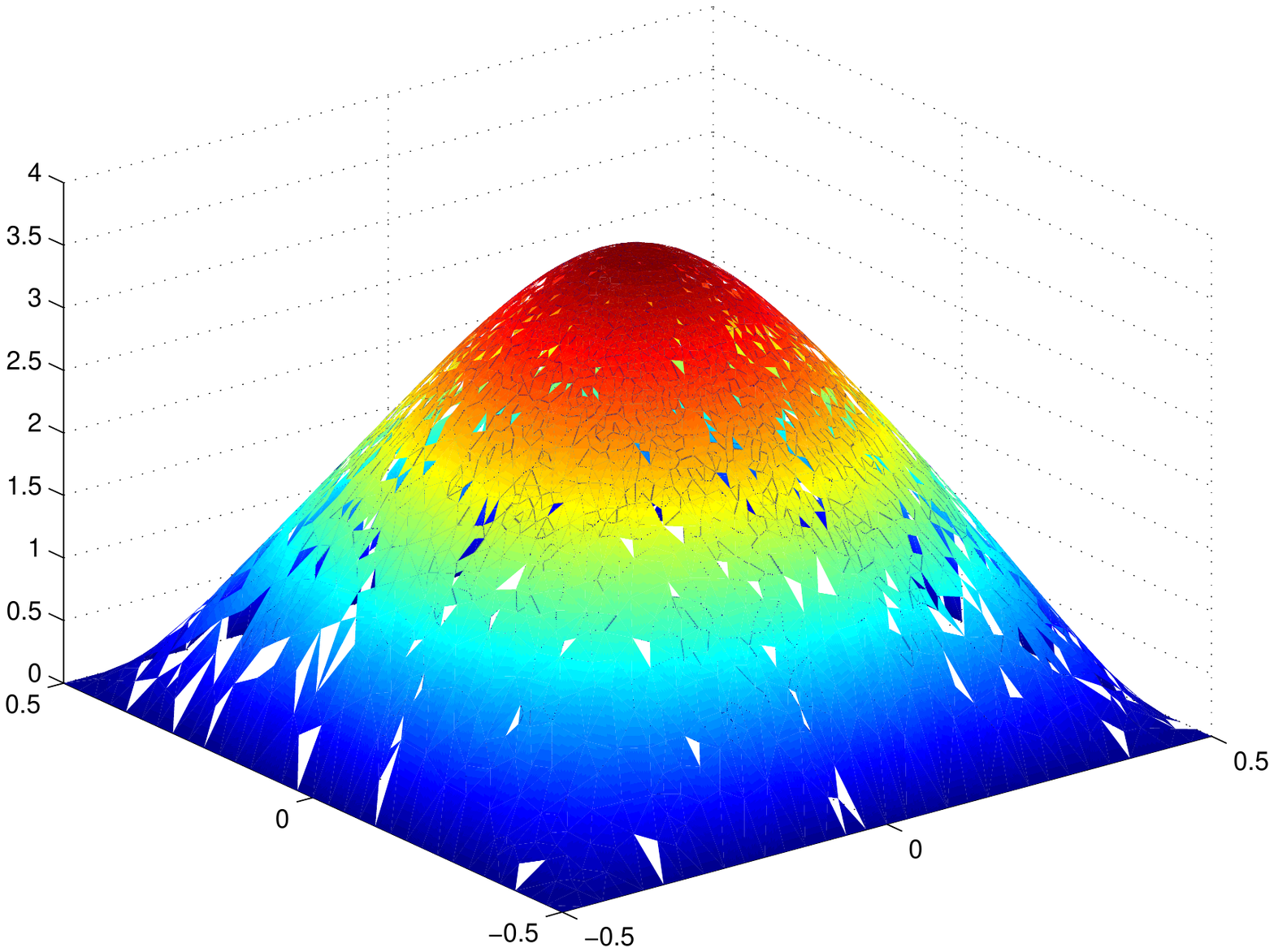}};
    \node at (0,0.5) {$R = 1$};
    \node[left] at (-0.45, 0) {$v$};
    \node[left] at (-0.45, -1) {\rotatebox{90}{$u = r(v)$}};

    \node at (1,0) {%
      \includegraphics[width=0.25\linewidth]{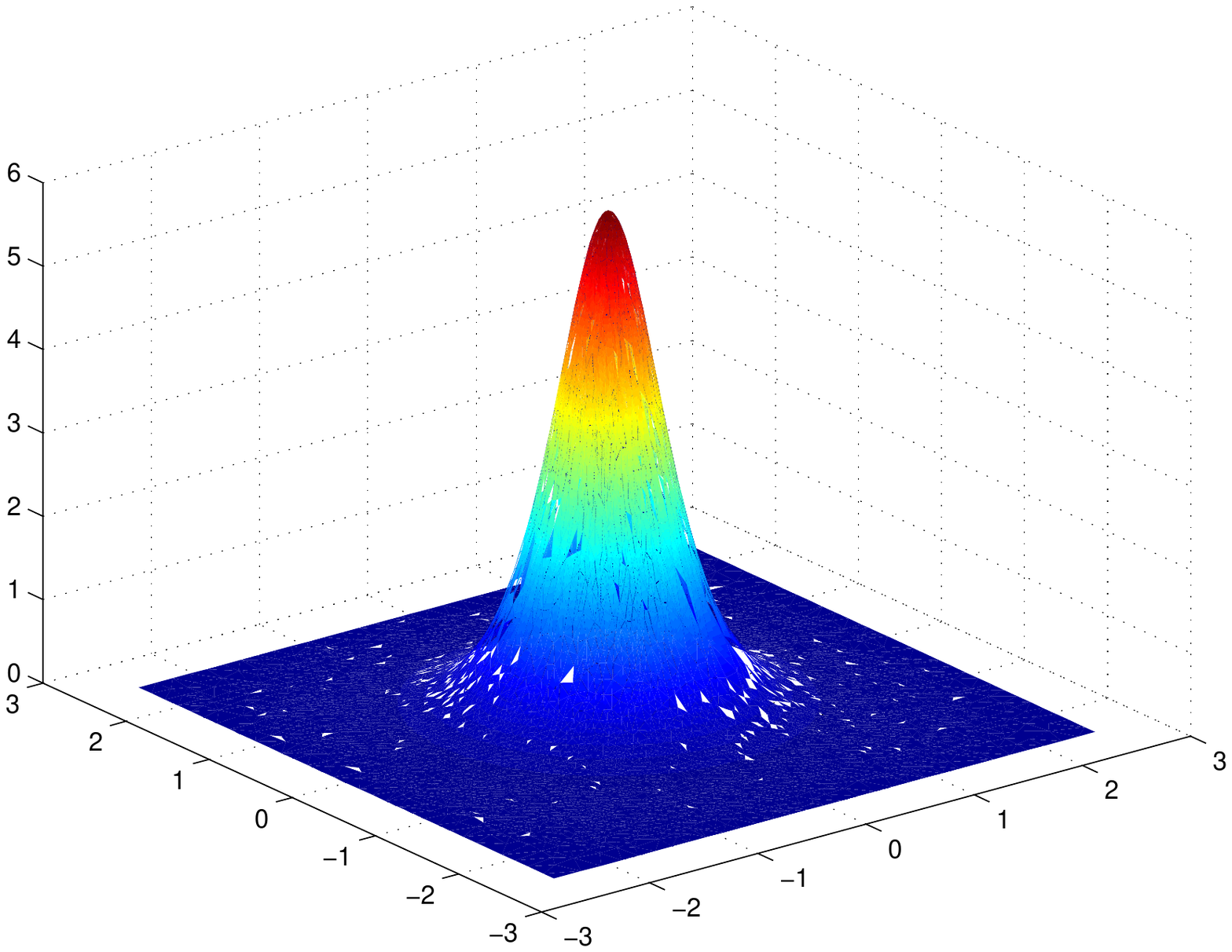}};
    \node at (1,-1) {%
      \includegraphics[width=0.25\linewidth]{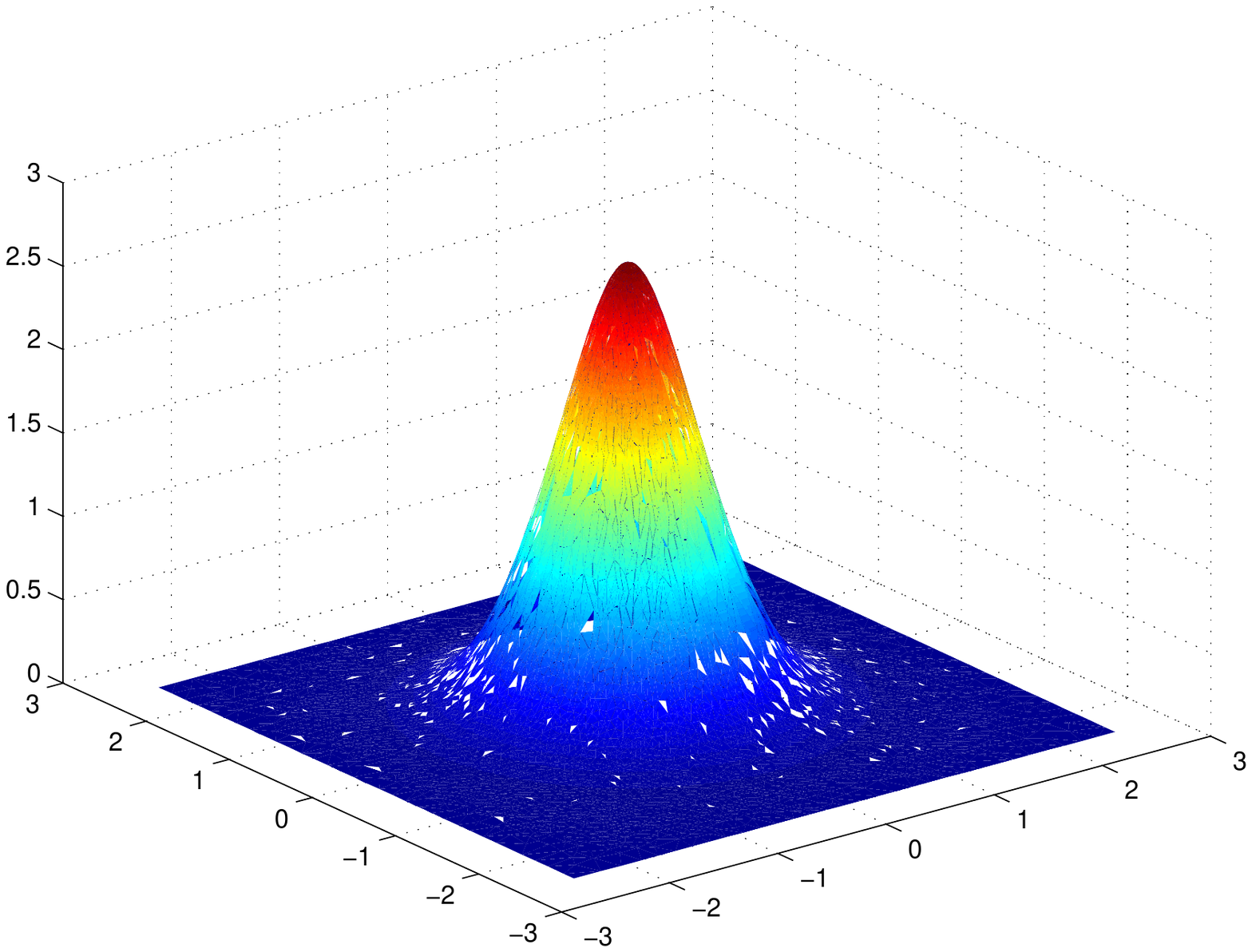}};
    \node at (1,0.5) {$R=5$};

    \node at (2,0) {%
      \includegraphics[width=0.25\linewidth]{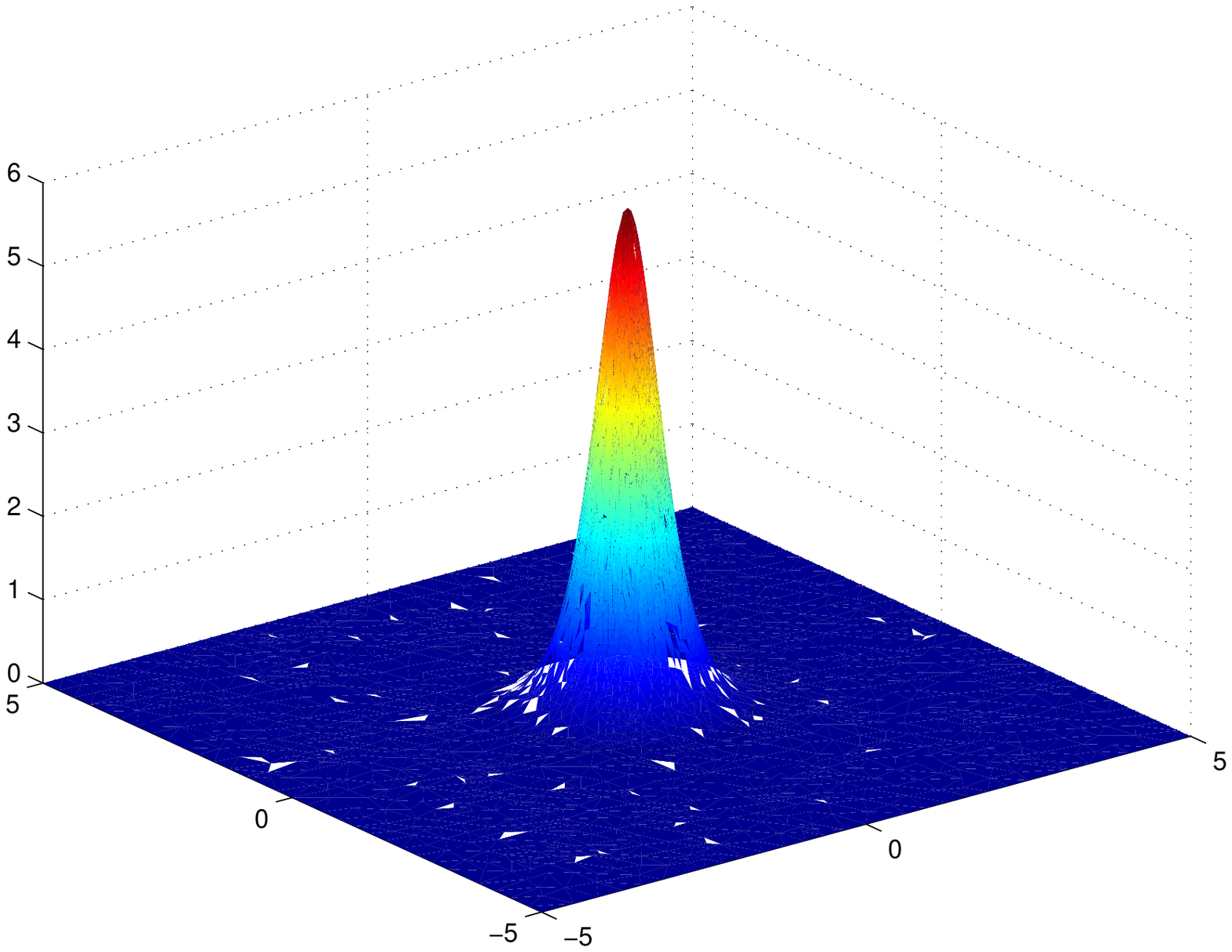}};
    \node at (2,-1) {%
      \includegraphics[width=0.25\linewidth]{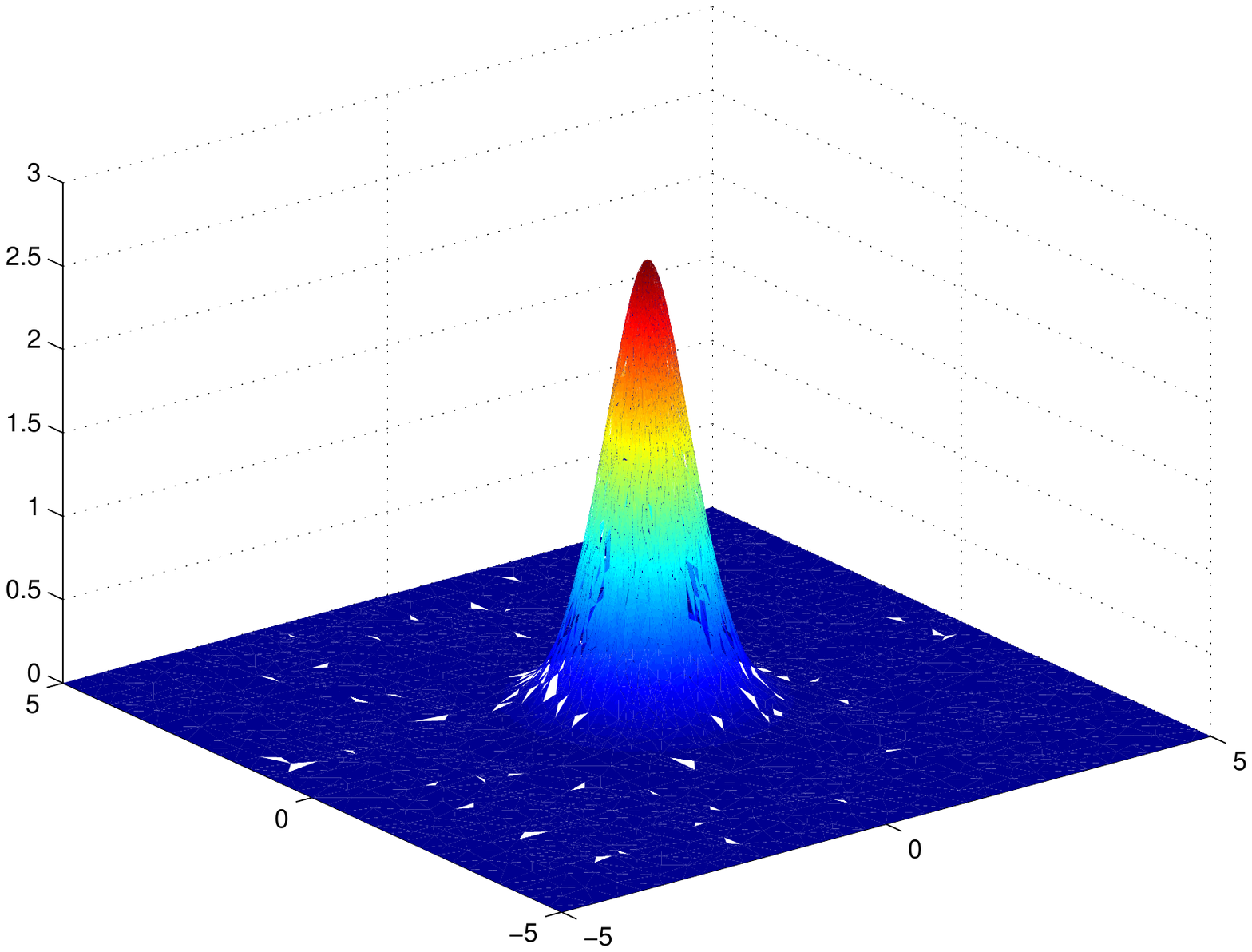}};
    \node at (2,0.5) {$R=10$};

    \node at (3,0) {%
      \includegraphics[width=0.25\linewidth]{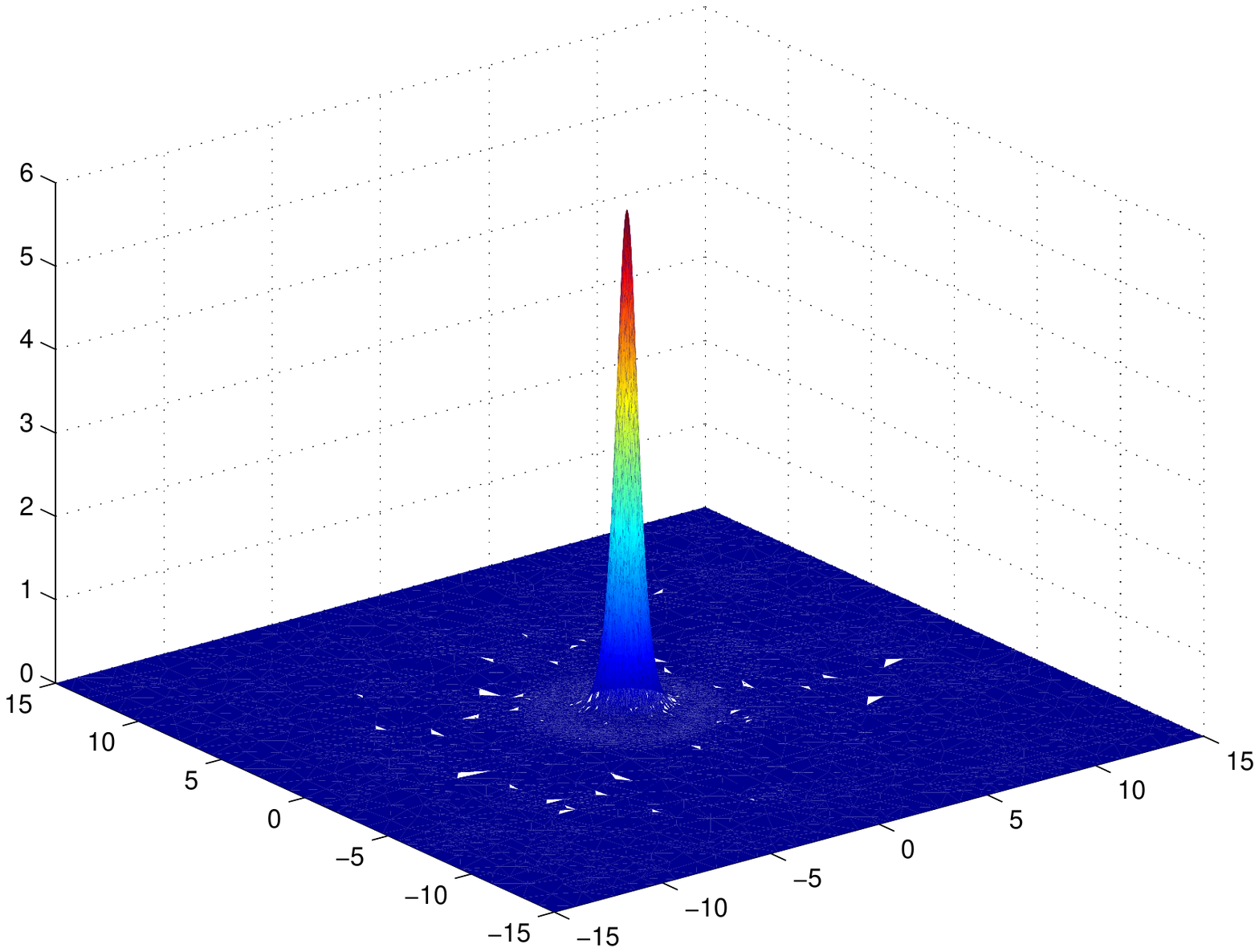}};
    \node at (3,-1) {%
      \includegraphics[width=0.25\linewidth]{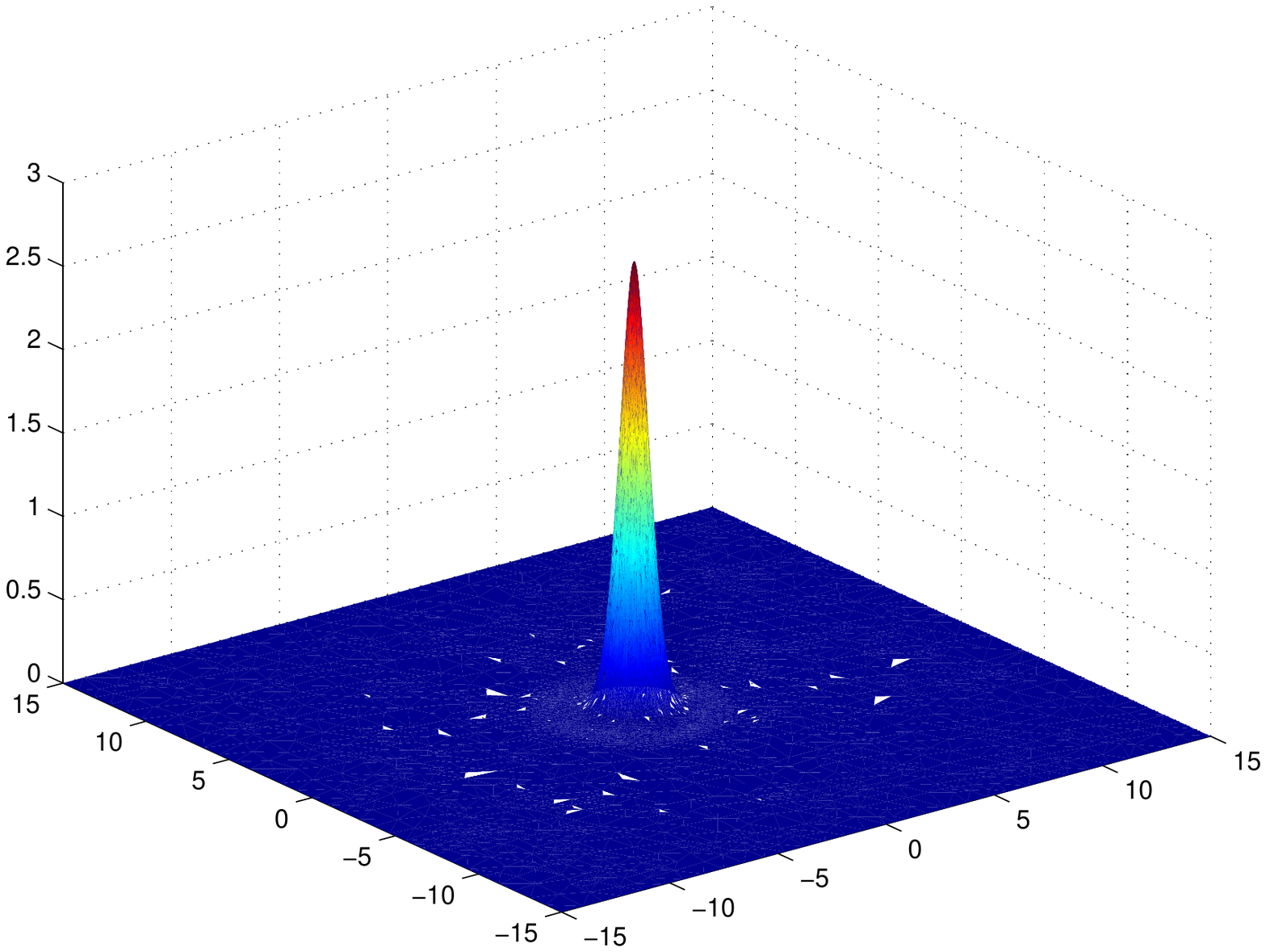}};
    \node at (3,0.5) {$R=30$};
  \end{tikzpicture}

  \vspace*{-8ex}%
  \caption{Sol.\ $v$ to~\eqref{exis1} and $u = r(v)$ to~\eqref{gs1}
    on $\Omega_R$ for $V = 10$ and $p=6$.}
  \label{fig:square,V=10,p=6}
\end{figure}

\begin{table}[ht]
  \begin{math}
    \begin{array}{r|cccc}
      & R = 1& R = 5& R = 10& R = 30\\
      \hline
      \max v&               10.57&  5.98&  5.98&  5.98\\
      \abs{\nabla v}_{L^2}& 19.56&  9.73&  9.73&  9.72\\
      \max u&               3.68&   2.68&  2.68&  2.68\\
      \mathcal T(v) =
      \mathcal E(u)&        105.1&  47.3&  47.3&  47.3\\
      \norm{\nabla\mathcal{T}(v)}&
      7\cdot 10^{-12}& 8.8\cdot 10^{-9}& 5.6\cdot 10^{-8}& 1.2\cdot 10^{-8}
    \end{array}
  \end{math}

  \vspace{1ex}
  \caption{Characteristics of approximate solutions $v$
    to~\eqref{exis1} and $u = r(v)$ to~\eqref{gs1} on $\Omega_R$
    for $V = 10$ and $p = 6$.}
  \vspace{-1ex}
  \label{tableValues,V=10,p=6}
\end{table}

\begin{figure}[ht]
  \centering
  \newcommand{\xmin}{3.5}
  \newcommand{\ymin}{0.3}
  \begin{tikzpicture}[y=3.5ex]
    \draw[->] (\xmin, \ymin) -- (7.5, \ymin) node[below]{$p$};
    \foreach \p in {4, 5, 6, 7} {%
      \draw (\p, \ymin + 0.12) -- (\p, \ymin-0.12) node[below]{$\p$};
    }
    \draw[->] (\xmin, \ymin) -- (\xmin, 5) node[left]{$\E(u)$};
    \foreach \y in {1, 2, 3, 4} {%
      \draw (\xmin + 0.07, \y) -- (\xmin - 0.07, \y) node[left]{$10^\y$};
    }
    \draw[thick,color=red] plot file{pe_V10_R1.dat};
    \node[right,color=red] at (4.3,4) {$\delta=2$};
    \draw[thick,color=blue,dashed] plot file{pe_V10_R1-delta1.dat};
    \node[color=blue, rotate=-30] at (4.4, 2.3) {$\delta=1$};
    \draw[thick,color=blue] plot file{pe_V10_R1-delta0.dat};
    \node[color=blue, rotate=-4] at (4.4, 0.9) {$\delta=0$};
  \end{tikzpicture}
  \hspace{6em}%
  \renewcommand{\ymin}{1.2}%
  \begin{tikzpicture}[y=0.7ex]
    \draw[->] (\xmin, \ymin) -- (7.5, \ymin) node[below]{$p$};
    \foreach \p in {4, 5, 6, 7} {%
      \draw (\p, \ymin + 0.7) -- (\p, \ymin - 0.7) node[below]{$\p$};
    }
    \draw[->] (\xmin, \ymin) -- (\xmin, 25) node[left]{$\abs{u}_\infty$};
    \foreach \y in {2, 5, 10,..., 23} {%
      \draw (\xmin + 0.07, \y) -- (\xmin - 0.07, \y) node[left]{$\y$};
    }
    \draw[thick,color=red] plot file{pn_V10_R1.dat};
    \draw[thick,color=blue,dashed] plot file{pn_V10_R1-delta1.dat};
    \draw[thick,color=blue] plot file{pn_V10_R1-delta0.dat};
  \end{tikzpicture}

  \vspace*{-2ex}
  \caption{Comparison between~\eqref{gs1} with $V=0$ and the problem
    $-\Delta u + 10u= \abs{u}^{p-1}u$ on $(-0.5,0.5)^2$.}
  \label{comparison,V=10}
\end{figure}

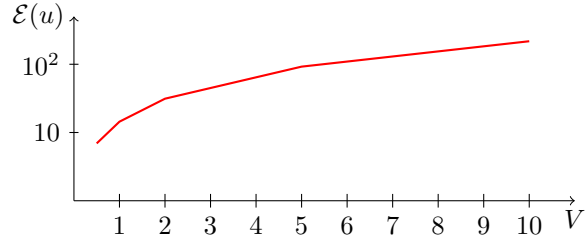
\begin{figure}[ht]
  \centering
  \begin{tikzpicture}[x=4ex, y=6ex]
    \draw[->] (0,0) -- (11, 0) node[below]{$V$};
    \draw[->] (0,0) -- (0, 2.7) node[left]{$\mathcal{E}(u)$};
    \foreach \x in {1,...,10}{
      \draw (\x, 0.1) -- (\x, -0.1) node[below]{$\x$};
    }
    \foreach \y/\l in {1/10,2/10^2}{
      \draw (0.1, \y) -- (-0.1,\y) node[left]{$\l$};
    }
    \draw[thick,color=red] plot file{bifurcation.dat};
  \end{tikzpicture}
  \caption{Energy of the solutions to~\eqref{gs1} on $\Omega_{10}$
    as $V \to 0$.}
  \label{V->0,p=4}
\end{figure}

\subsection{Exponential nonlinearity}

Since our numerical experiments are performed in two dimensions, it
is natural to wonder what shape the solutions are expected to have
when the nonlinearity has exponential growth.  
To that aim, we consider in this
section the equation
\begin{equation}
  \label{eq:exp}
  -\Delta u - u \Delta u^2 + V u = \exp(u^2) - 1. 
\end{equation}
The outcome of the numerical computations is presented on
Figures~\ref{fig:square,V=0,exp}--\ref{fig:square,V=10,exp}
and Tables~\ref{tableValues,V=0,exp}--\ref{tableValues,V=10,exp}.
Unsurprisingly, one may see that the qualitative behavior of the
solutions is similar to what could be observed before with power
nonlinearities.

\begin{figure}[ht]
  \vspace*{-6ex}%
  \begin{tikzpicture}[x=0.25\linewidth, y=0.18\linewidth]
    \node at (0,0) {%
      \includegraphics[width=0.25\linewidth]{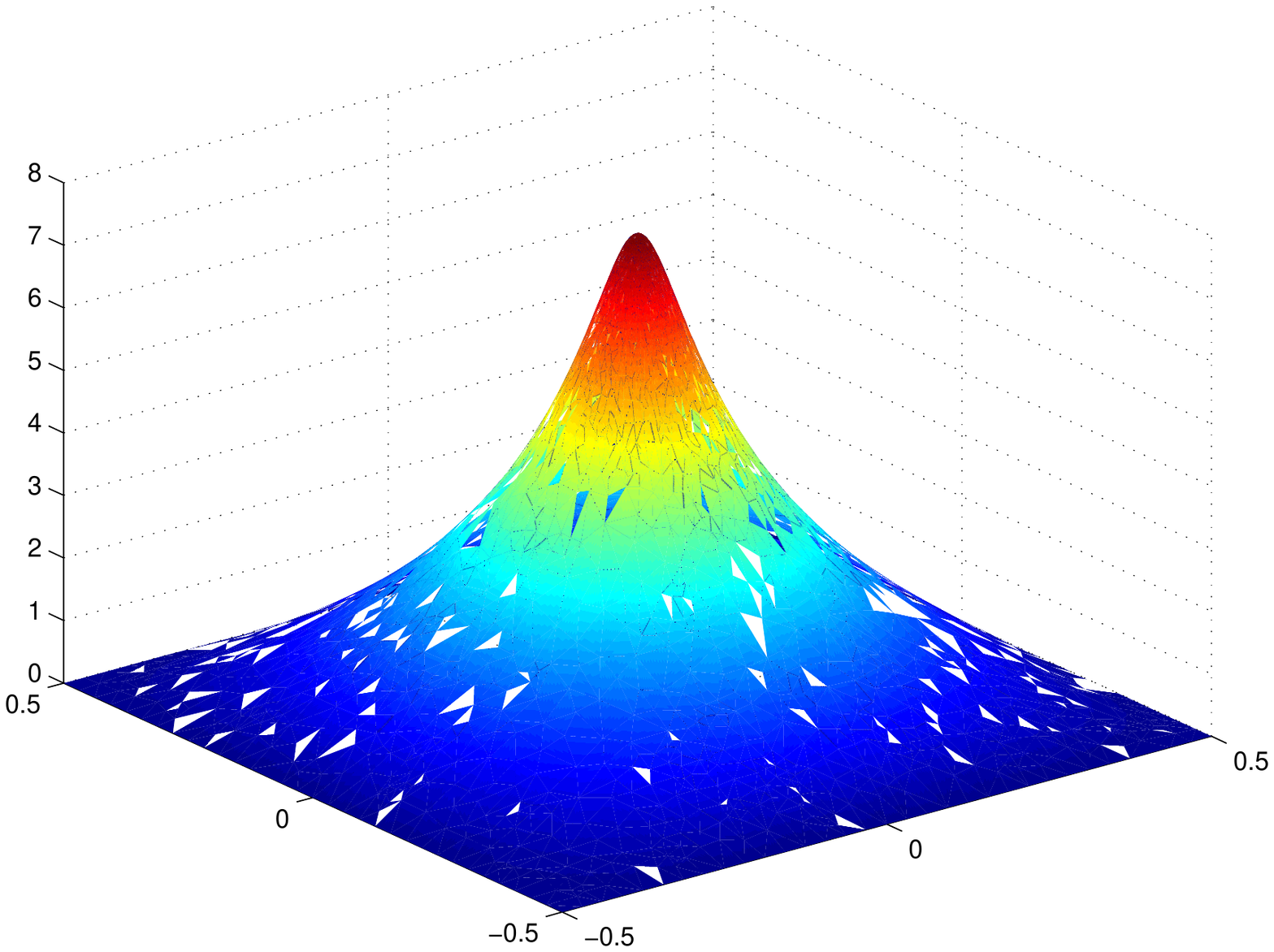}};
    \node at (0,-1) {%
      \includegraphics[width=0.25\linewidth]{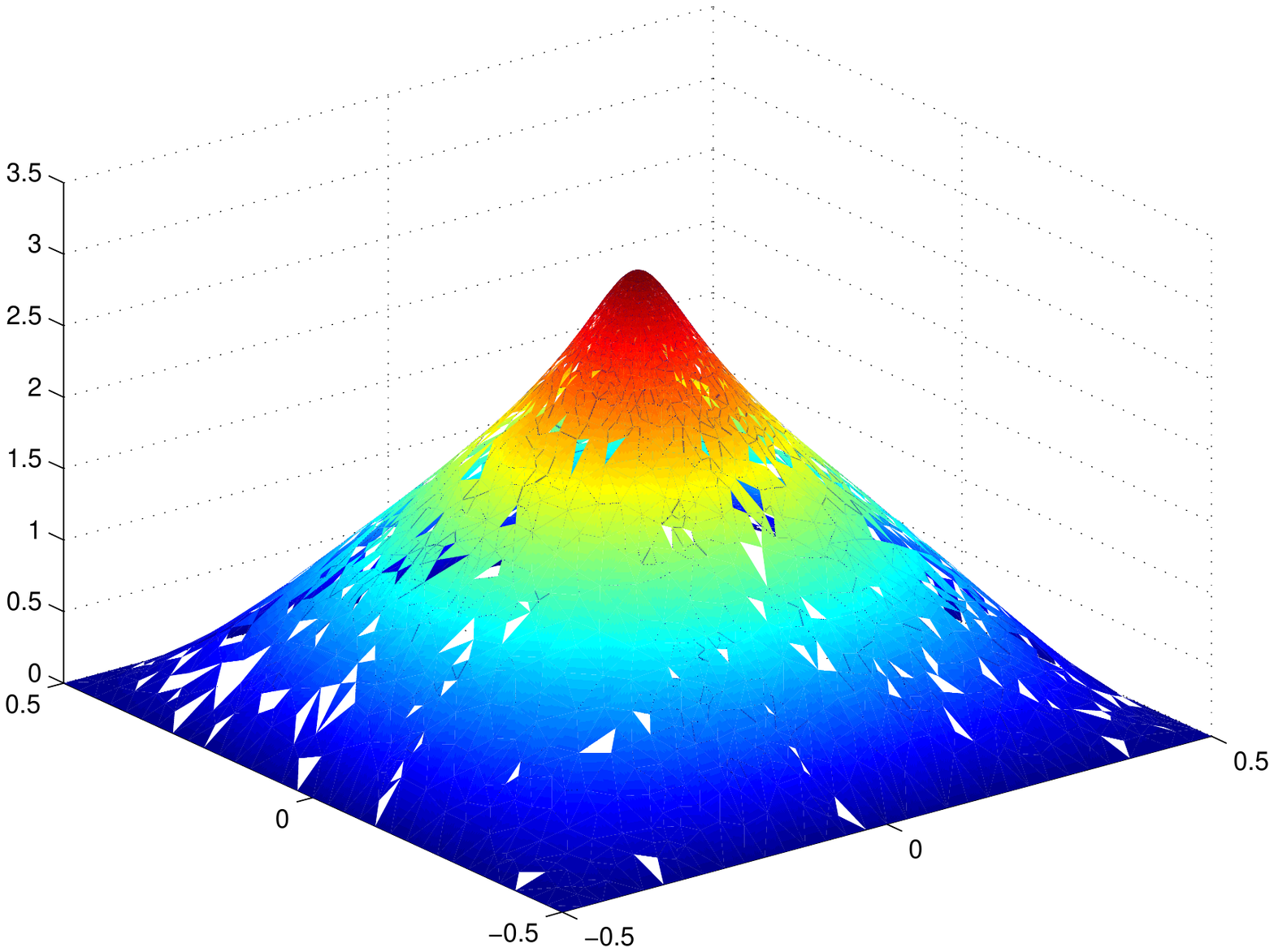}};
    \node at (0,0.5) {$R = 1$};
    \node[left] at (-0.45, 0) {$v$};
    \node[left] at (-0.45, -1) {\rotatebox{90}{$u = r(v)$}};

    \node at (1,0) {%
      \includegraphics[width=0.25\linewidth]{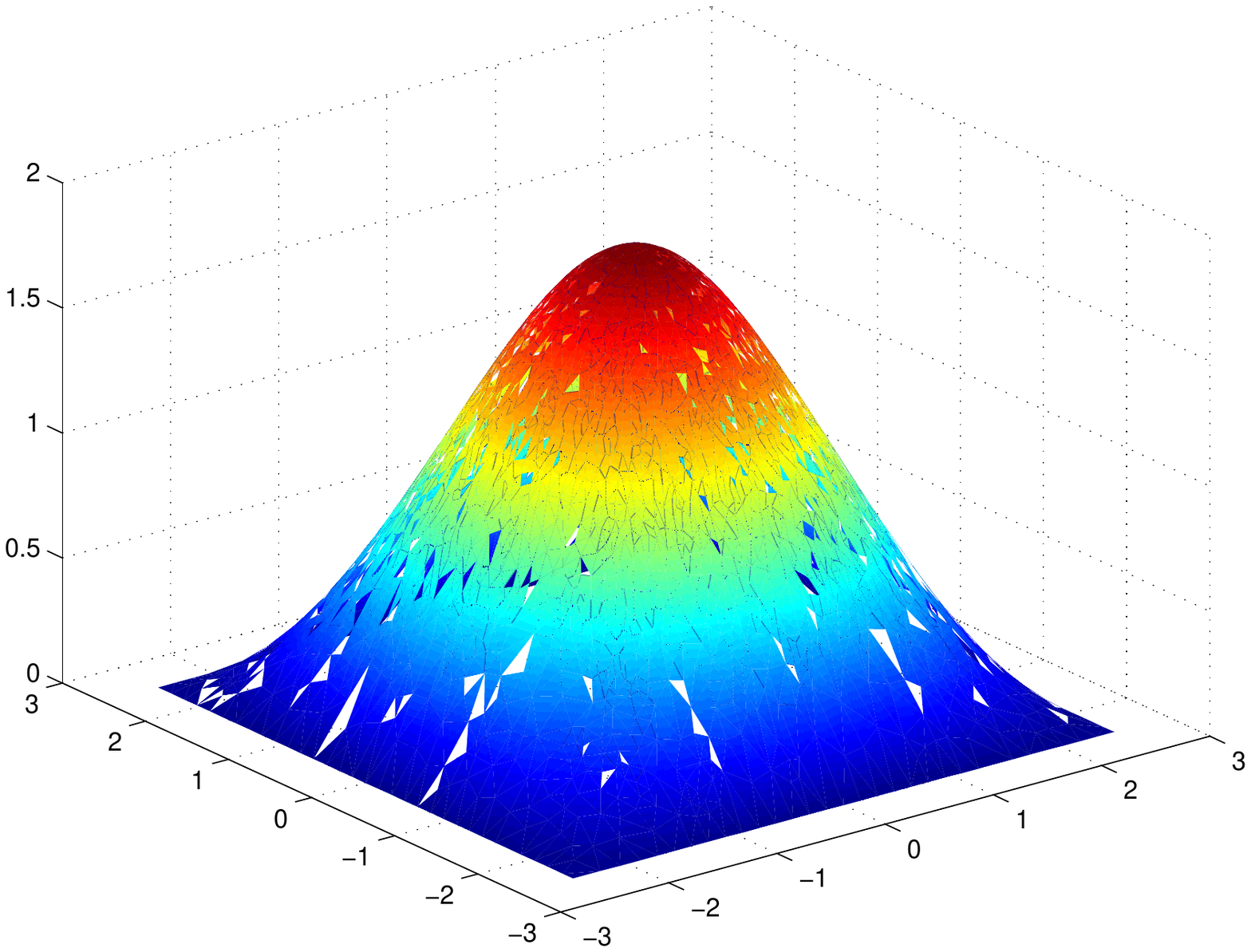}};
    \node at (1,-1) {%
      \includegraphics[width=0.25\linewidth]{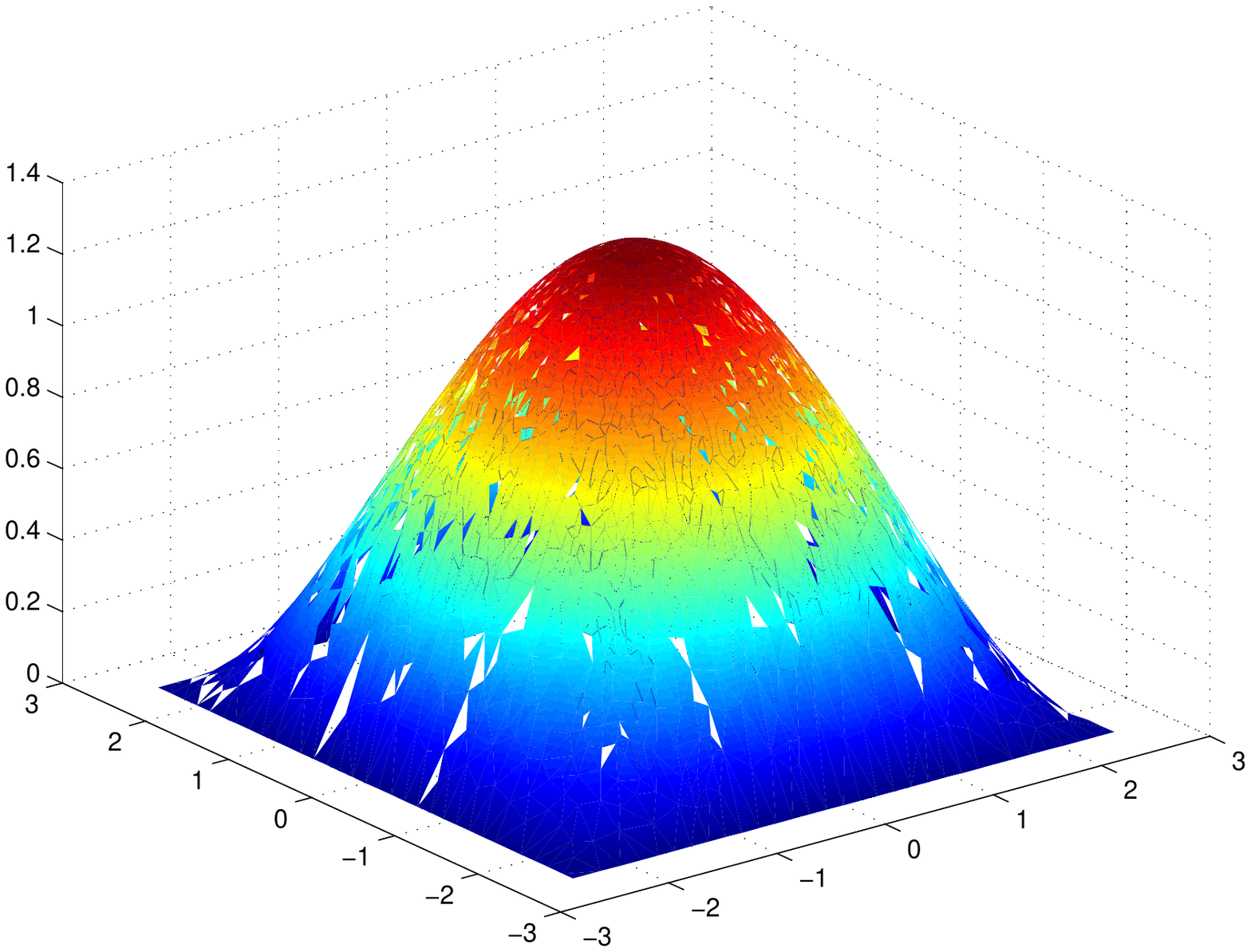}};
    \node at (1,0.5) {$R = 5$};

    \node at (2,0) {%
      \includegraphics[width=0.25\linewidth]{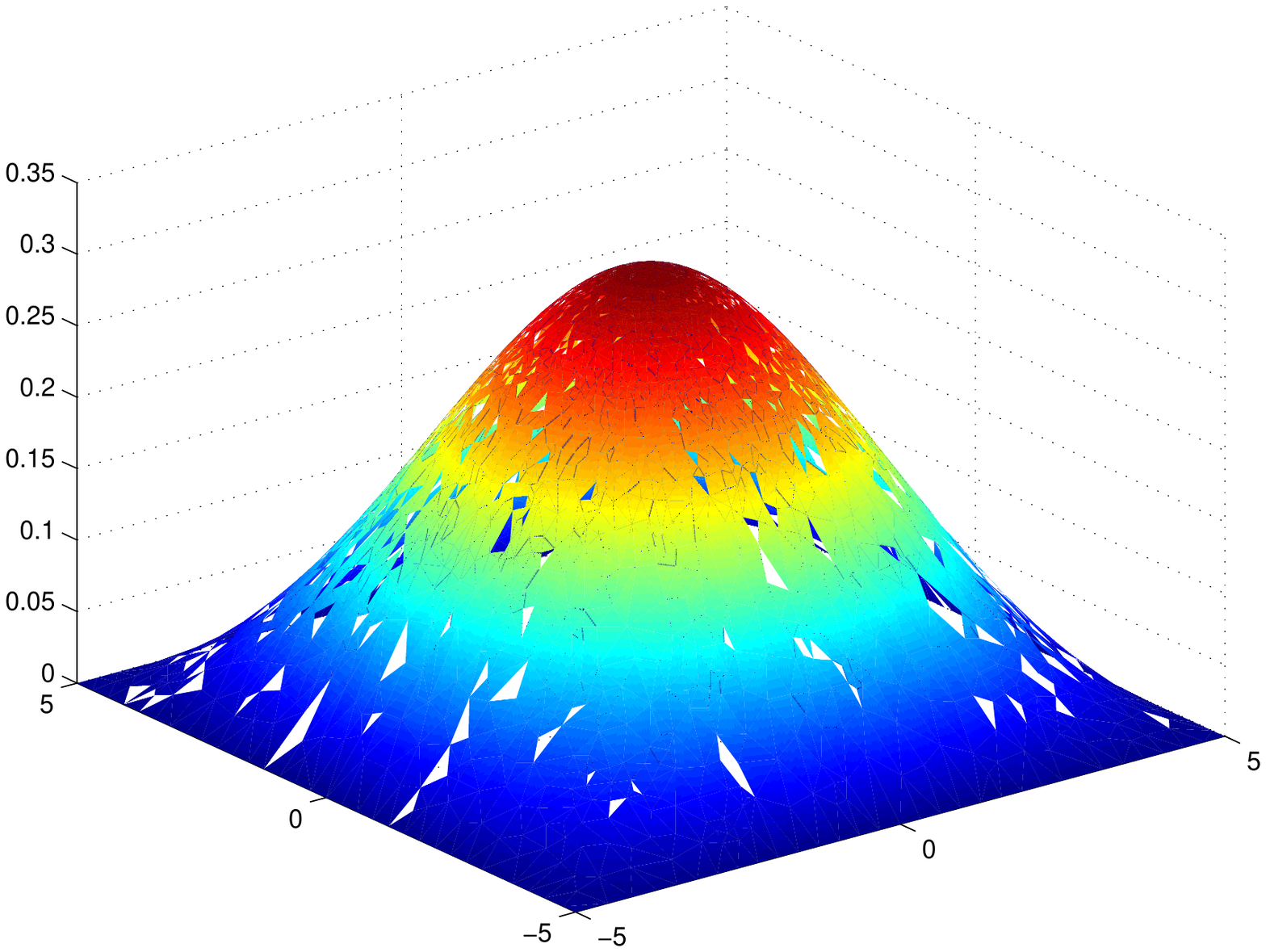}};
    \node at (2,-1) {%
      \includegraphics[width=0.25\linewidth]{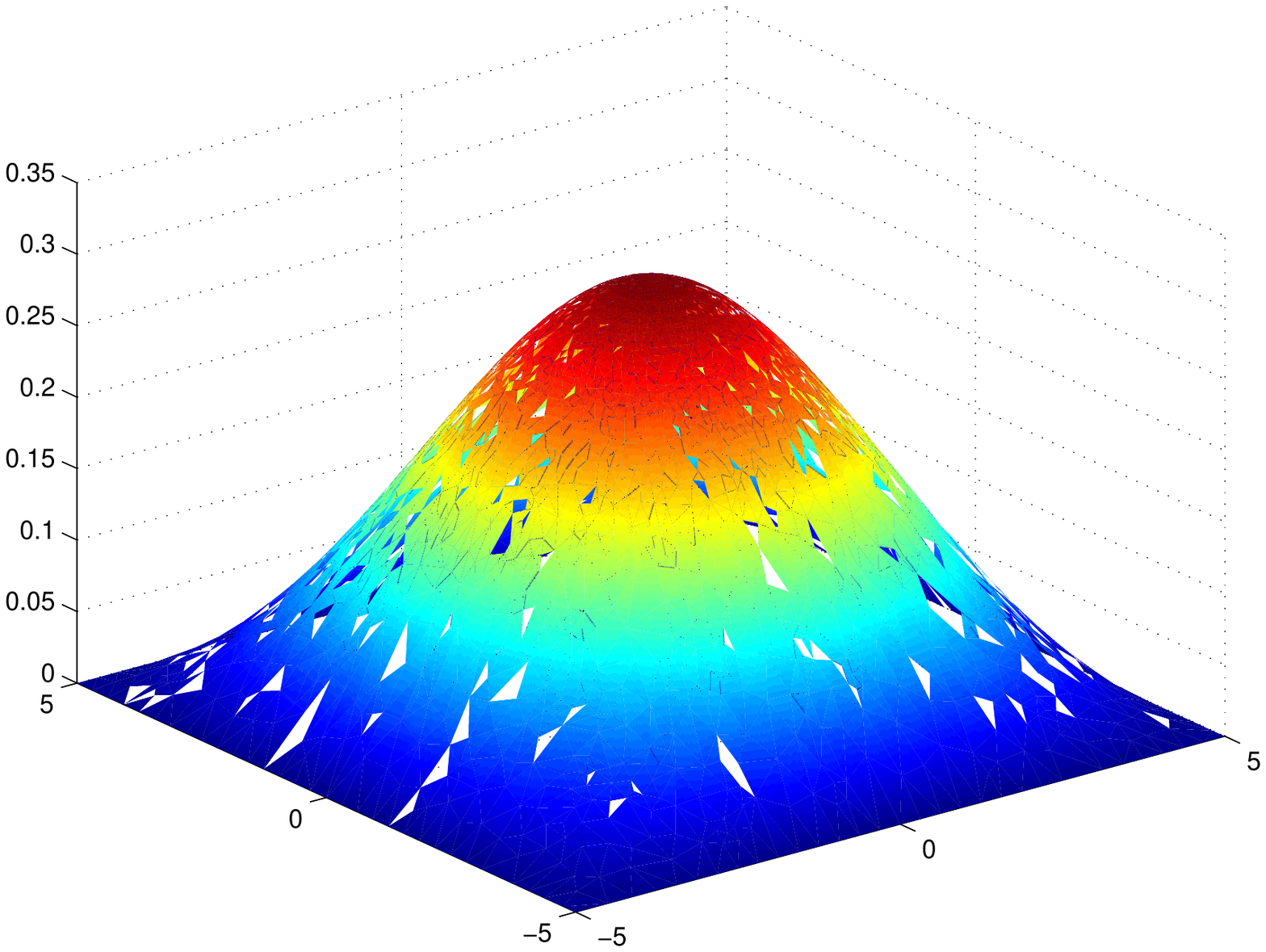}};
    \node at (2,0.5) {$R = 10$};

    \node at (3,0) {%
      \includegraphics[width=0.25\linewidth]{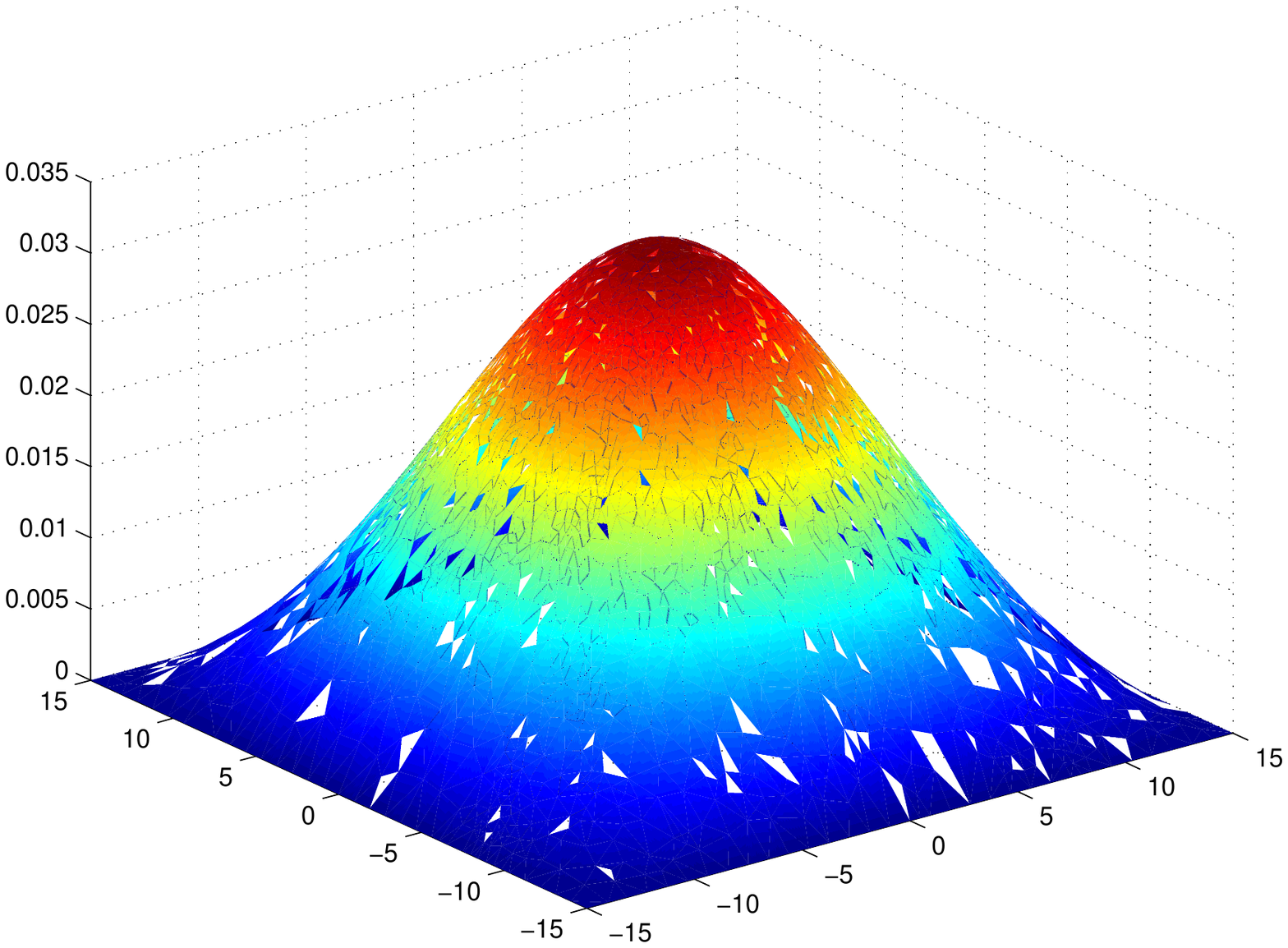}};
    \node at (3,-1) {%
      \includegraphics[width=0.25\linewidth]{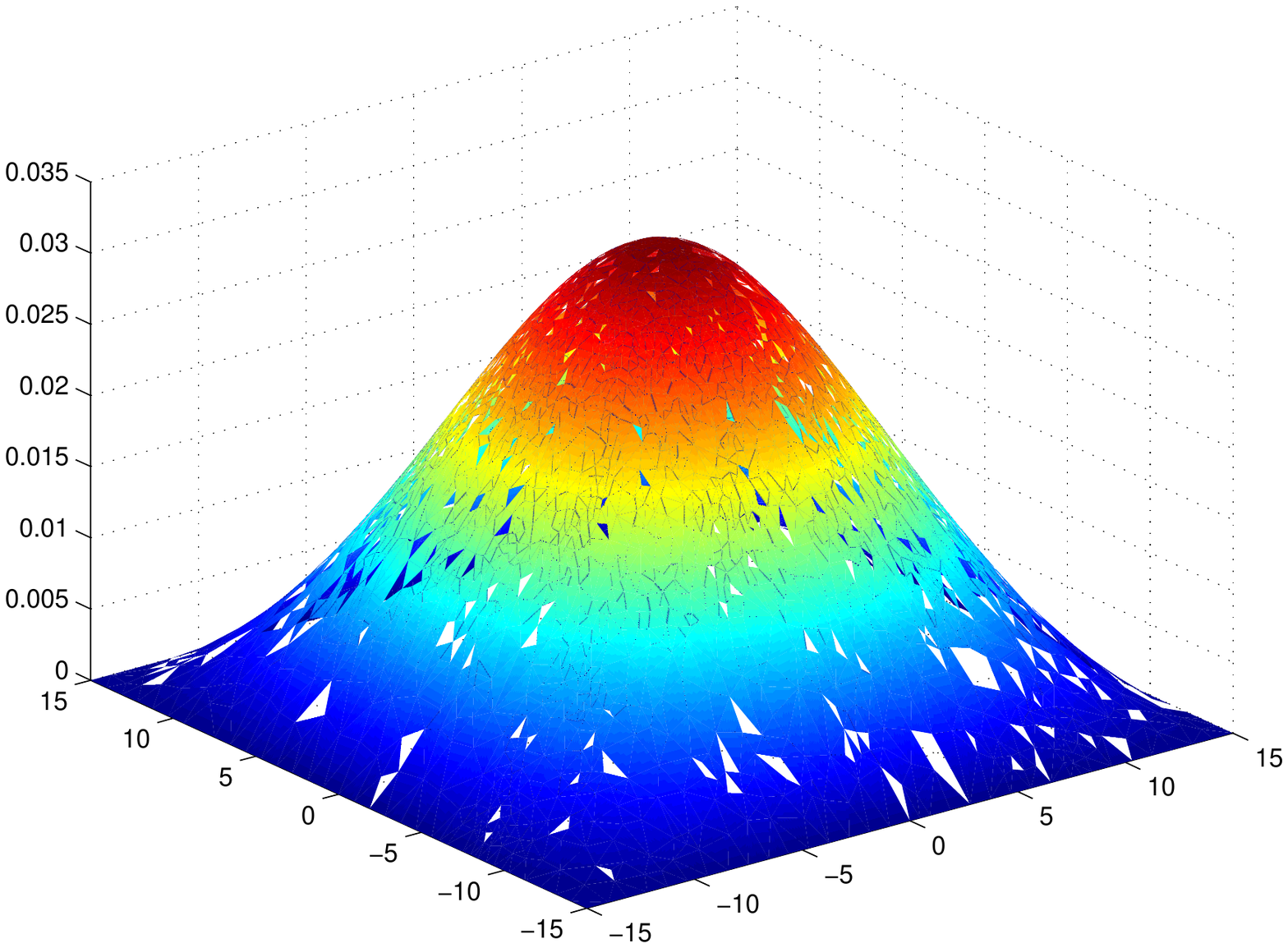}};
    \node at (3,0.5) {$R = 30$};
  \end{tikzpicture}

  \vspace*{-8ex}%
  \caption{Sol.\ to~\eqref{eq:exp} on $\Omega_R$ for $V=0$.}
  \label{fig:square,V=0,exp}
\end{figure}

\begin{table}[ht]
  \begin{math}
    \begin{array}{r|cccc}
      & R = 1& R = 5& R = 10& R = 30\\
      \hline
      \max v&                7.6&  1.85&  0.31&  0.03\\
      \abs{\nabla v}_{L^2}&   11&  3.76&  0.63&  0.065\\
      \max u&                3.1&  1.30&  0.30&  0.03\\
      \mathcal T(v) =
      \mathcal E(u)&        44.4&  2.02&  0.06&  0.0007\\
      \norm{\nabla\mathcal{T}(v)}&
      3\cdot 10^{-8}& 7\cdot 10^{-11}& 1.7\cdot 10^{-7}& 1.3\cdot 10^{-9}
    \end{array}
  \end{math}

  \vspace{1ex}
  \caption{Characteristics of approximate solutions $v$ and $u =
    r(v)$ to~\eqref{eq:exp} on $\Omega_R$ for $V =0$.}
  \vspace{-1ex}
  \label{tableValues,V=0,exp}
\end{table}

\begin{figure}[ht]
  \vspace*{-6ex}%
  \begin{tikzpicture}[x=0.25\linewidth, y=0.18\linewidth]
    \node at (0,0) {%
      \includegraphics[width=0.25\linewidth]{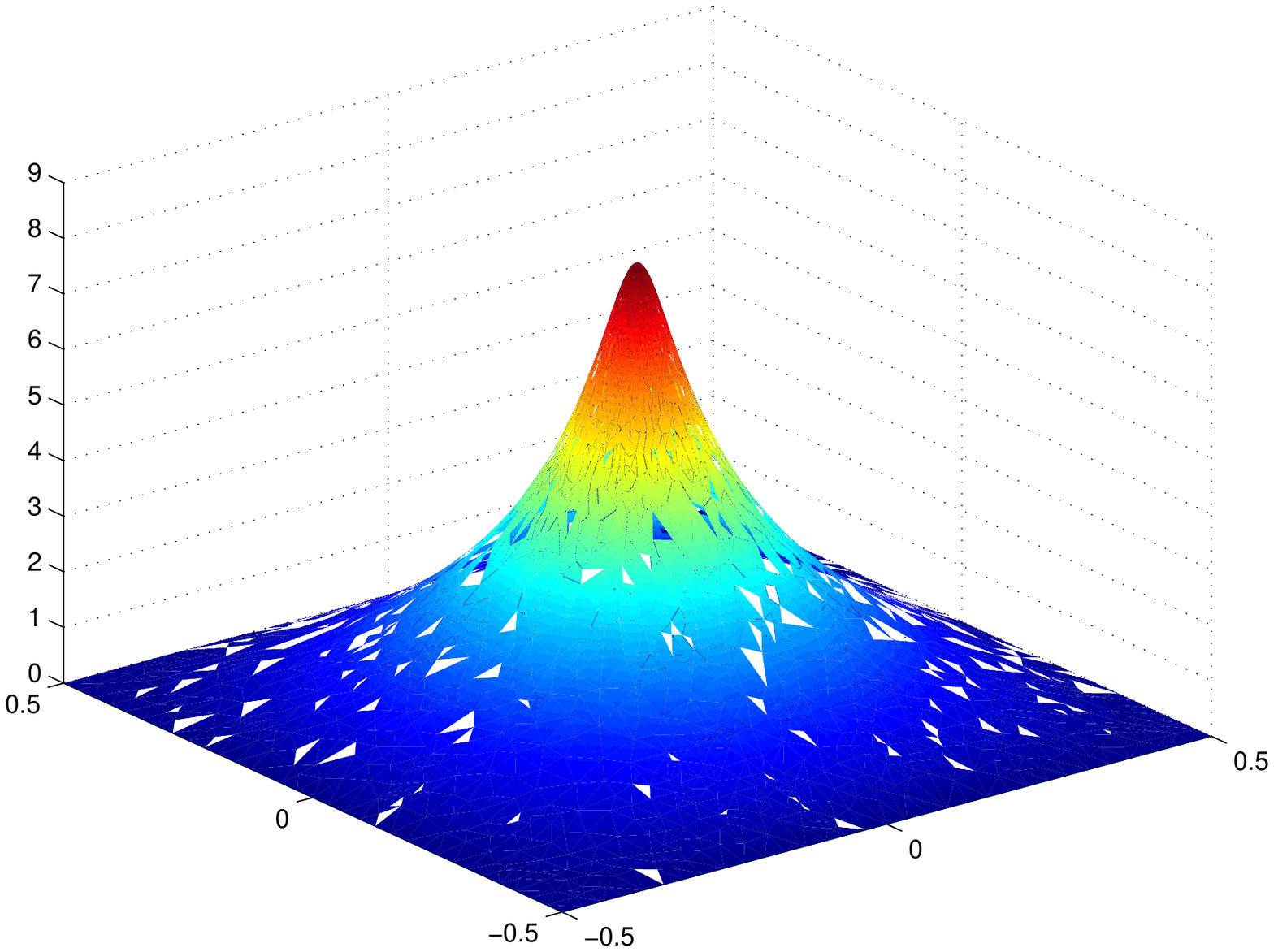}};
    \node at (0,-1) {%
      \includegraphics[width=0.25\linewidth]{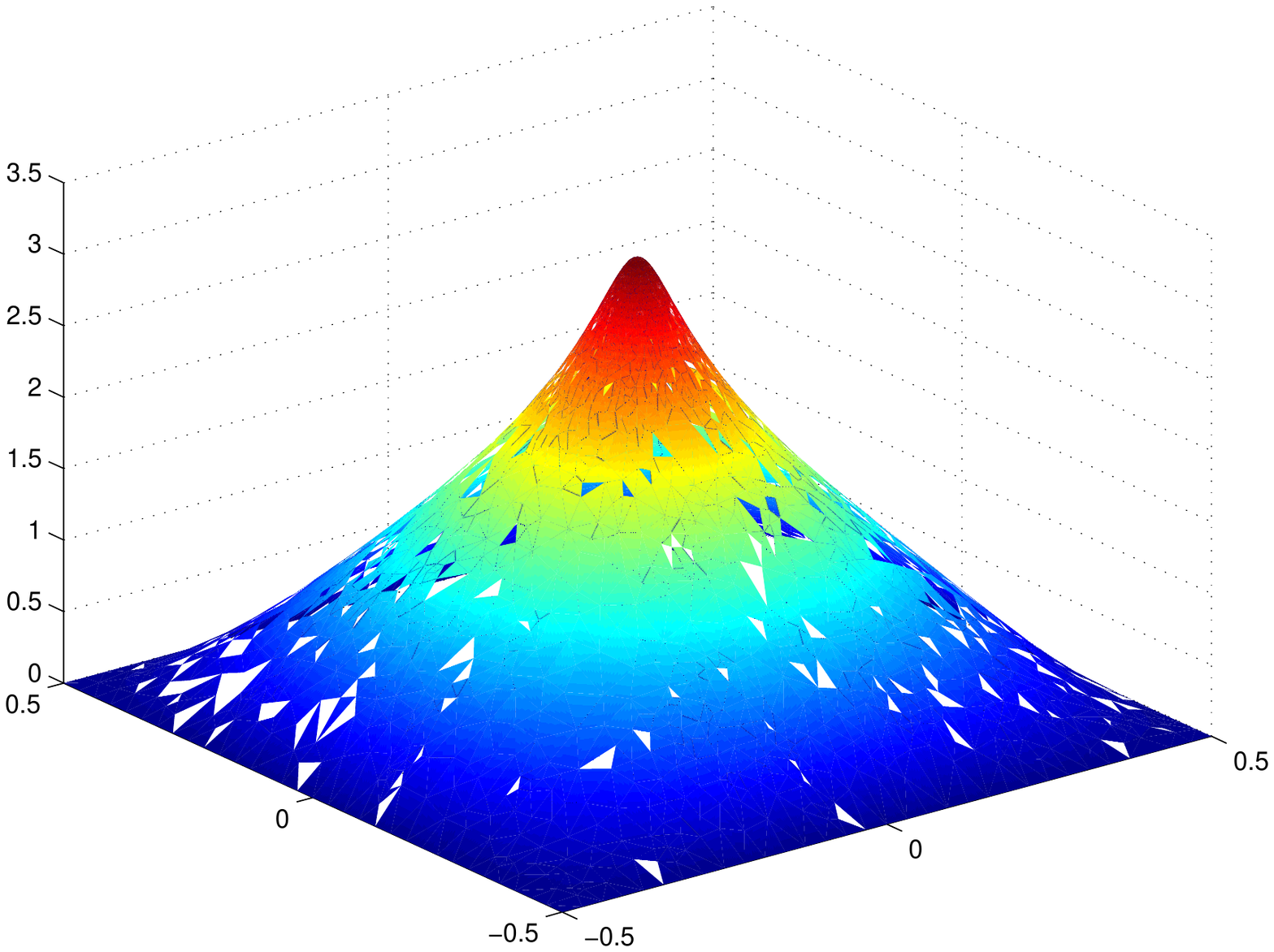}};
    \node at (0,0.5) {$R = 1$};
    \node[left] at (-0.45, 0) {$v$};
    \node[left] at (-0.45, -1) {\rotatebox{90}{$u = r(v)$}};

    \node at (1,0) {%
      \includegraphics[width=0.25\linewidth]{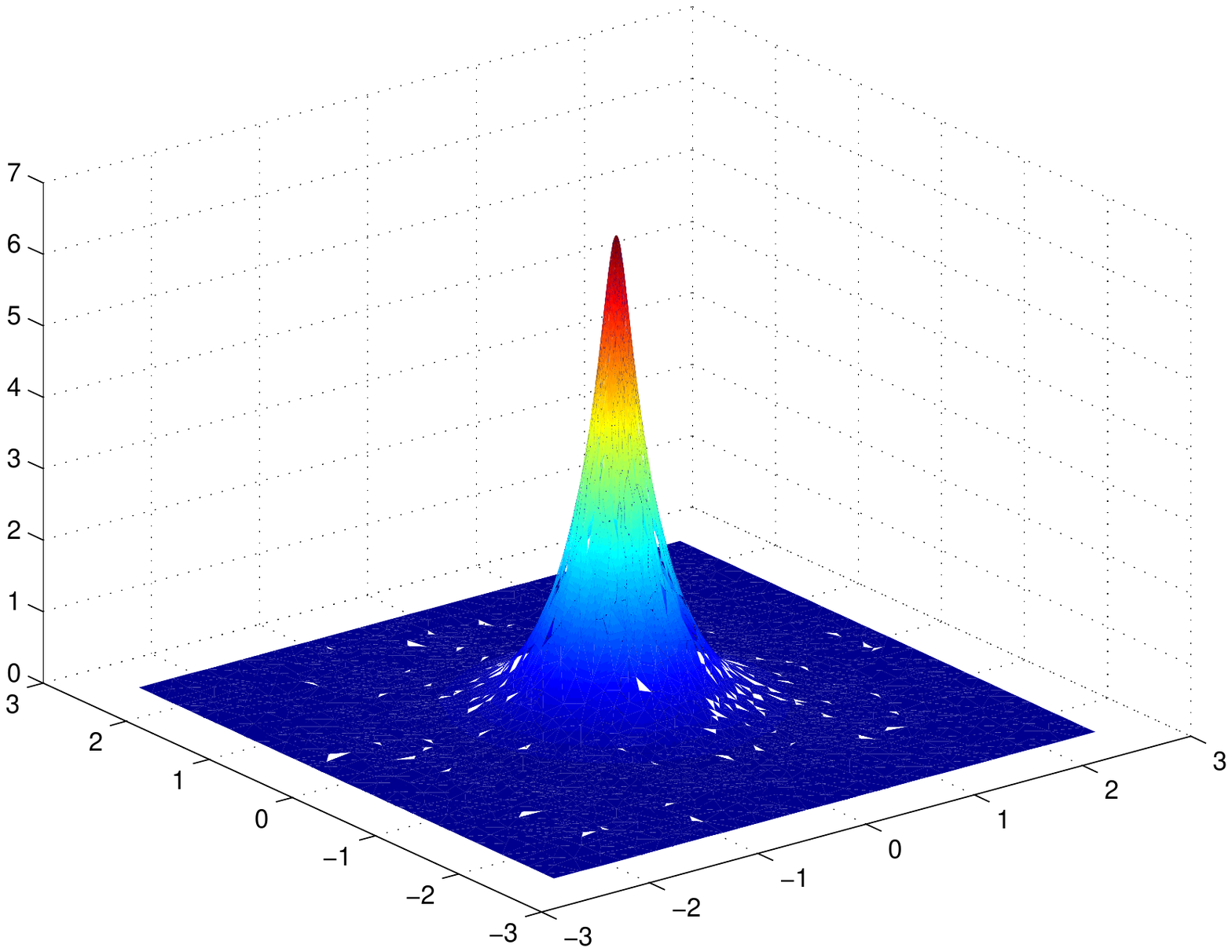}};
    \node at (1,-1) {%
      \includegraphics[width=0.25\linewidth]{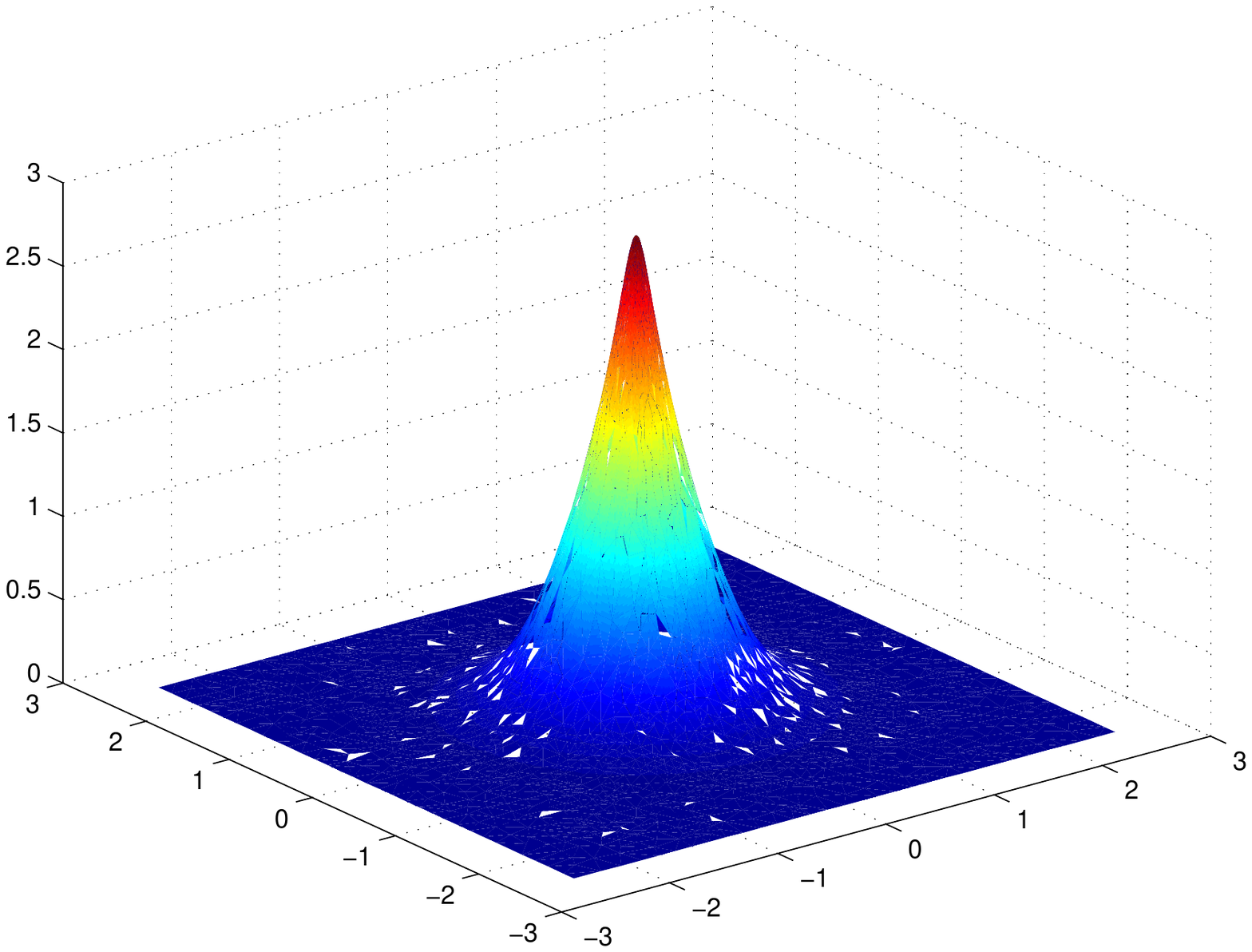}};
    \node at (1,0.5) {$R = 5$};

    \node at (2,0) {%
      \includegraphics[width=0.25\linewidth]{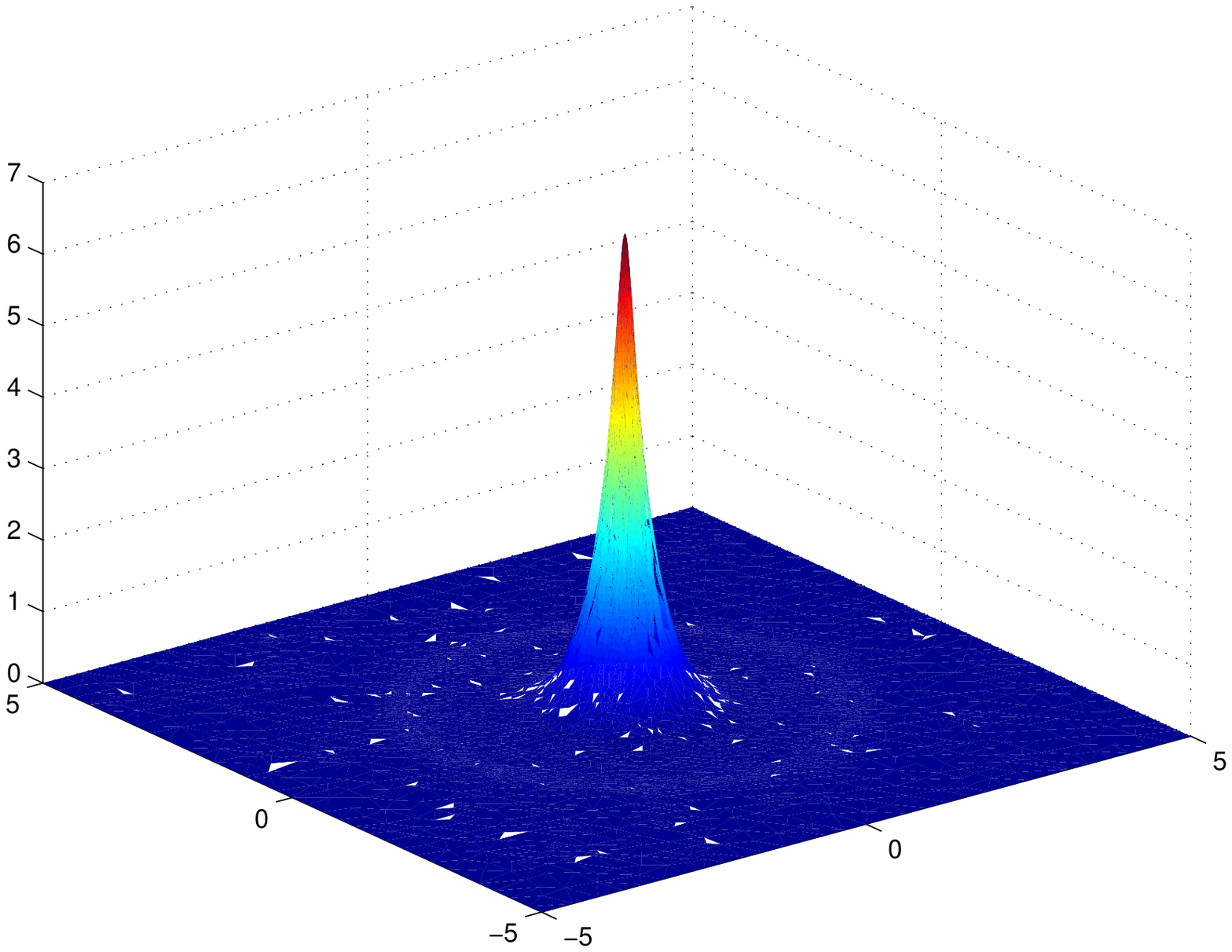}};
    \node at (2,-1) {%
      \includegraphics[width=0.25\linewidth]{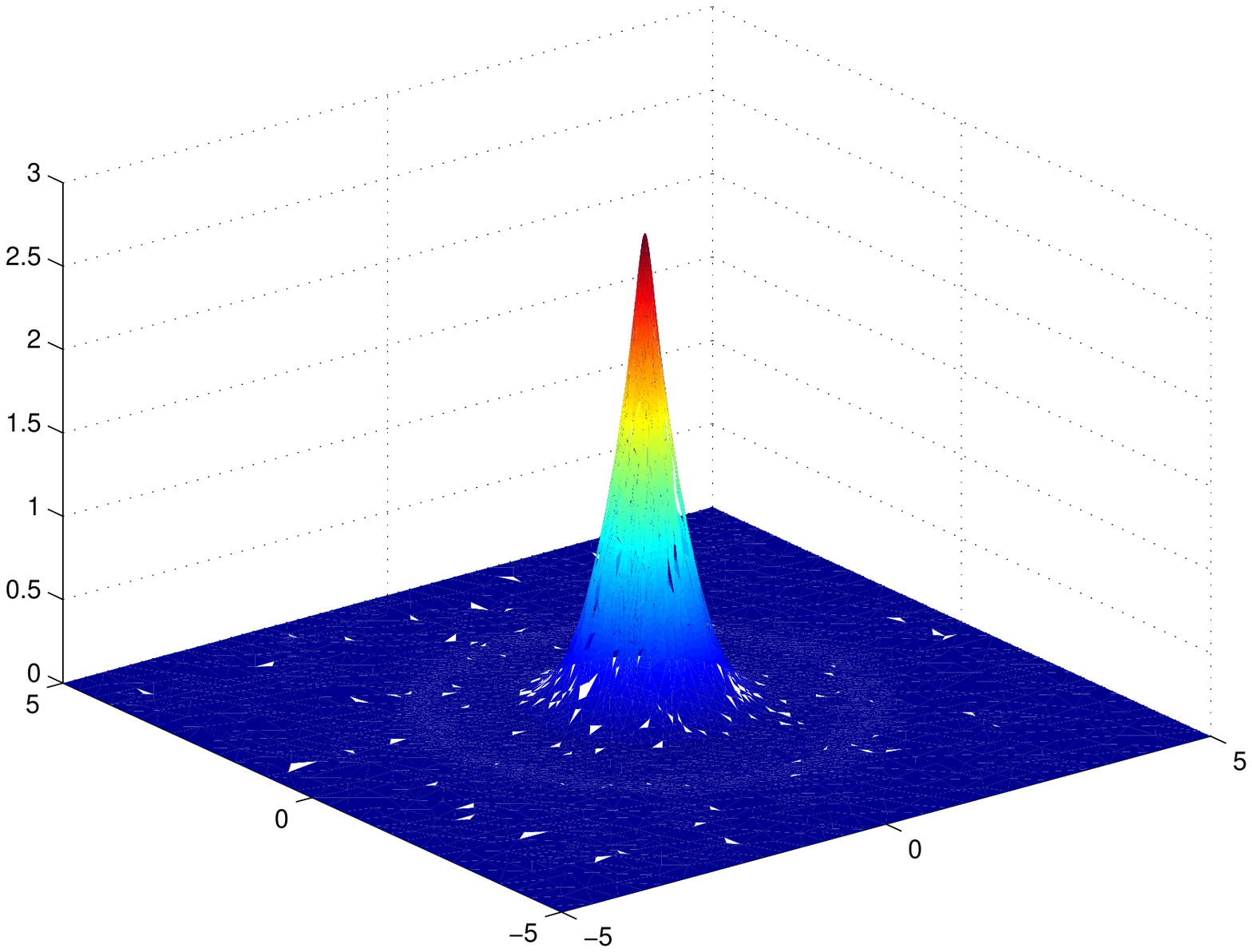}};
    \node at (2,0.5) {$R = 10$};

    \node at (3,0) {%
      \includegraphics[width=0.25\linewidth]{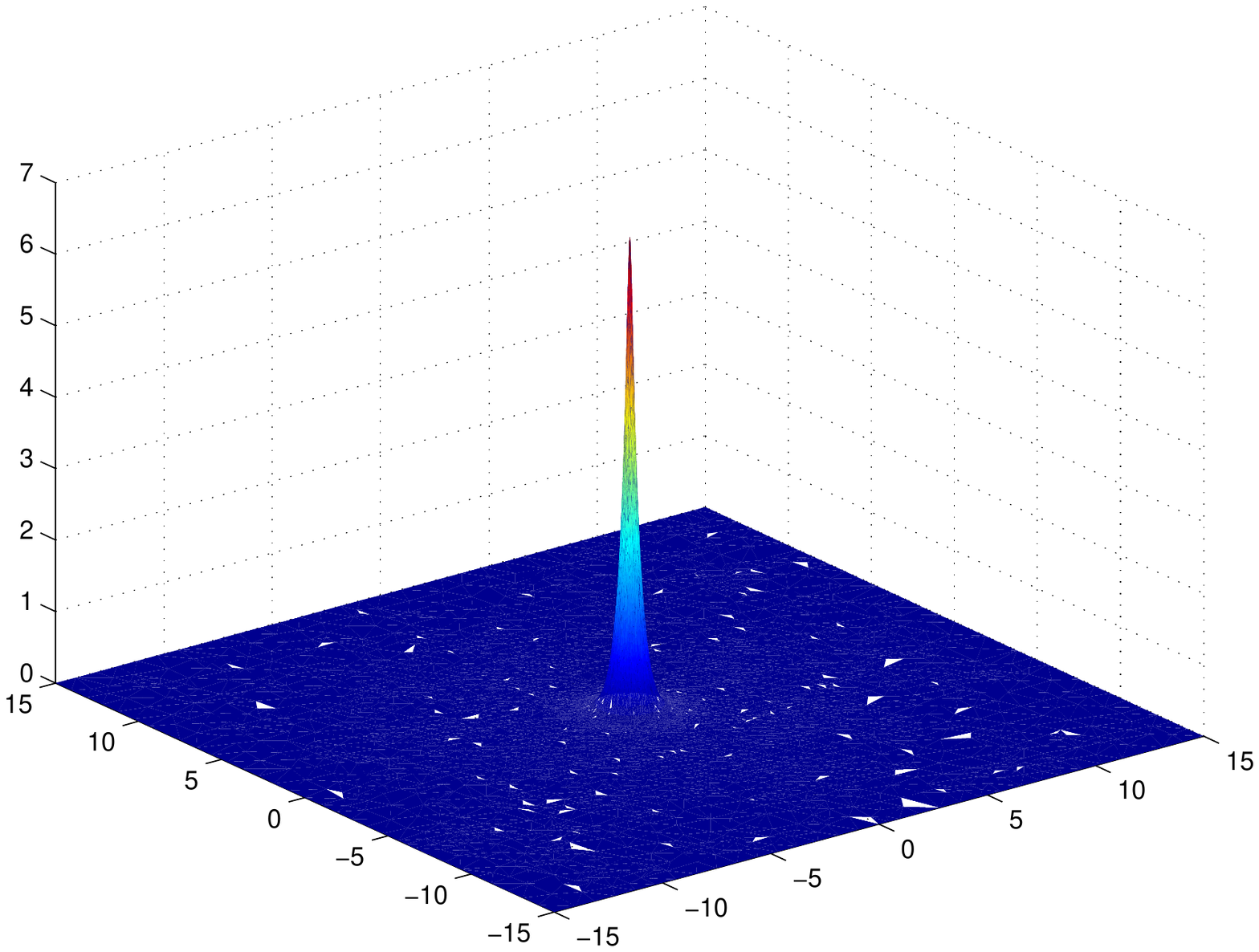}};
    \node at (3,-1) {%
      \includegraphics[width=0.25\linewidth]{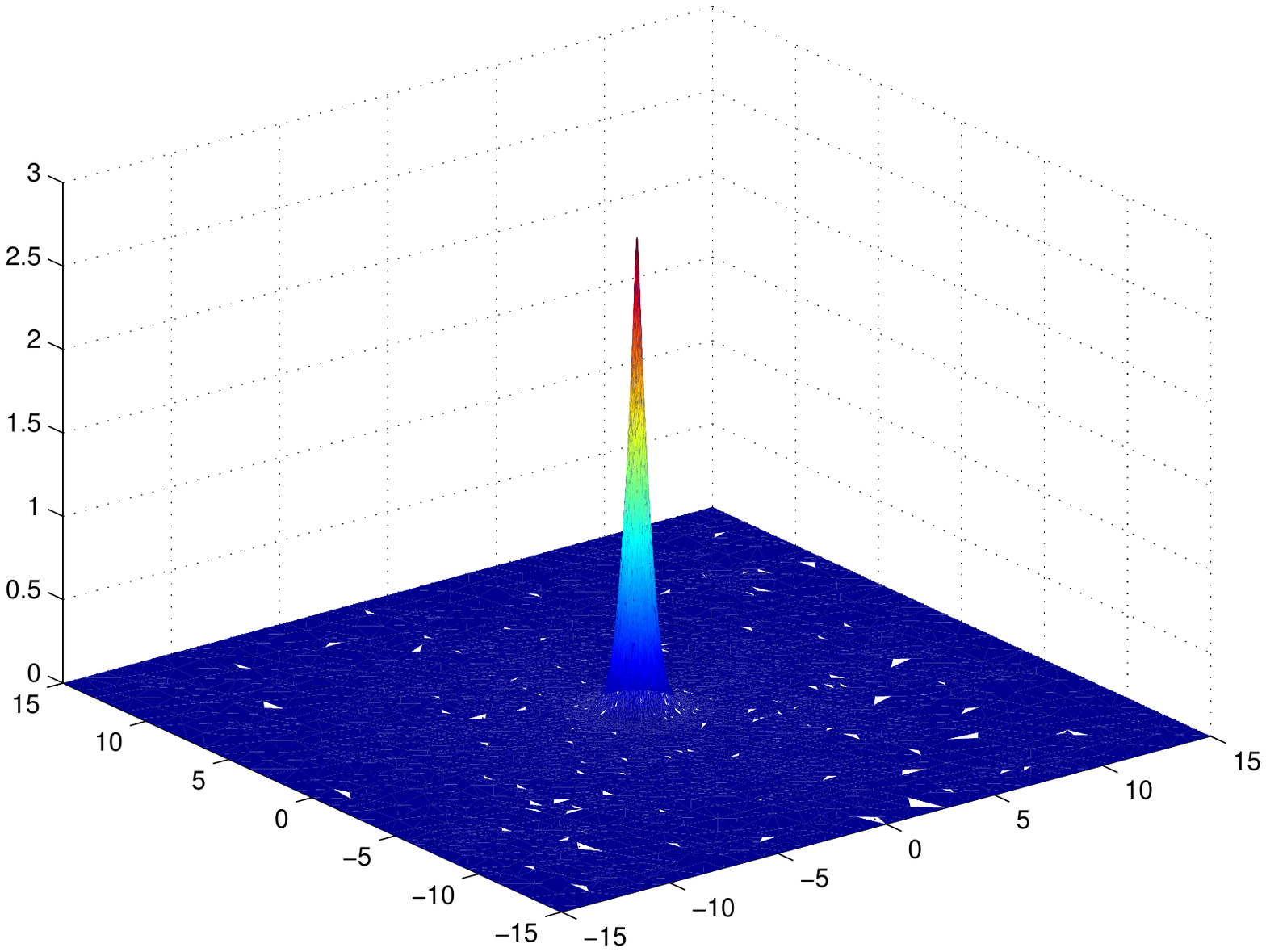}};
    \node at (3,0.5) {$R = 30$};
  \end{tikzpicture}

  \vspace*{-8ex}%
  \caption{Sol.\ to~\eqref{eq:exp} on $\Omega_R$ for $V=10$.}
  \label{fig:square,V=10,exp}
\end{figure}

\begin{table}[ht]
  \begin{math}
    \begin{array}{r|cccc}
      & R = 1& R = 5& R = 10& R = 30\\
      \hline
      \max v&                8.04&  6.63& 6.63&  6.62\\
      \abs{\nabla v}_{L^2}&  10.98& 9.10& 9.10&  9.10\\
      \max u&                3.17&  2.84& 2.84&  2.84\\
      \mathcal T(v) =
      \mathcal E(u)&         50.60& 41.47& 41.45&  41.40\\
      \norm{\nabla\mathcal{T}(v)}&
      3.6\cdot 10^{-7}& 8.6\cdot 10^{-8}& 8.5\cdot 10^{-9}& 9\cdot 10^{-3}
    \end{array}
  \end{math}

  \vspace{1ex}
  \caption{Characteristics of the approximate solutions $v$ and $u =
    r(v)$ to~\eqref{eq:exp} for $V =10$.}
  \vspace{-1ex}
  \label{tableValues,V=10,exp}
\end{table}

\subsection{Non-constant potentials}
\label{num:non-constant V}

To conclude this investigation, let us examine the case of a variable
potential.  E.~Gloss~\cite{Gloss2010} in dimension $N \ge 3$ and
J.~M.~do \'O and U.~Severo~\cite{O-Severo2010} for $N = 2$ showed that
the equation
\begin{equation}
  \label{eq:concentration}
  -\eps^2 (\Delta u + u \Delta u^2) + V(x) u = g(u)
  \quad\text{in } \R^N
\end{equation}
possesses solutions concentrating around local minima of
the potential $V$ when $\eps \to 0$.
When the potential is constant, this corresponds to $R = 1/\eps$
and, for a positive potential, the above graphs show a concentration
around the origin on a bounded domain (which suggests that
any point will do on $\R^N$).
In this section we have considered~\eqref{eq:concentration}
on~$(-0.5,0.5)^2$ with $g(u) = \abs{u}^3 u$ and the double well
potential
\begin{equation*}
  V(x) := 10 - 8 \exp\bigl(-20\, \abs{x - c}^2\bigr)
  - 5 \exp\bigl(-30\, \abs{x - c'}^2\bigr)
\end{equation*}
where $c = (-0.2, 0.2)$ and $c' = (0.3, -0.2)$.  It is pictured on
Fig.~\ref{fig:square,Vx,p=4}.  Note that the two wells have different
depths.  For $\eps = 0.05$, the MPA returns two different solutions
(see Fig.~\ref{fig:square,Vx,p=4}) depending on the initial function.
(We have chosen not to display the function $v$ as its shape is
similar to the one of $u$.)
The one with the lower value of $\mathcal T$ (hence $\mathcal E$) is 
the solution located around the lower well of~$V$.
It is obtained by using the MPA with the same initial guess as
before.  For the right one, the MPA is applied starting with a
function localized at the other well, namely
\begin{equation}
  \label{eq:guess-loc}
  (x_1, x_2) \mapsto \max\bigl\{0,\
  0.1 - (x_1 - c'_1)^2 - (x_2 - c'_2)^2 \bigr\} .
\end{equation}
For larger $\eps$, such as $\eps = 0.25$,
the MPA returns only one solution with a maximum point
not too far from the point at which $V$ achieves its global minimum,
see the rightmost graph on Fig.~\ref{fig:square,Vx,p=4}
(the graph depicted is the outcome of the MPA with the usual initial
function but even using the function~\eqref{eq:guess-loc}
as a starting point gives the same output).
For even larger $\eps$, such as $\eps=1$, the solution is very similar
to the one for $R=1$ displayed on Fig.~\ref{fig:square,V=10,p=4}.
\begin{figure}[h!t]
  \vspace*{-6ex}%
  \begin{tikzpicture}[x=0.25\linewidth, y=0.18\linewidth]
    \node at (0, -0.1) {%
      \includegraphics[width=0.25\linewidth]{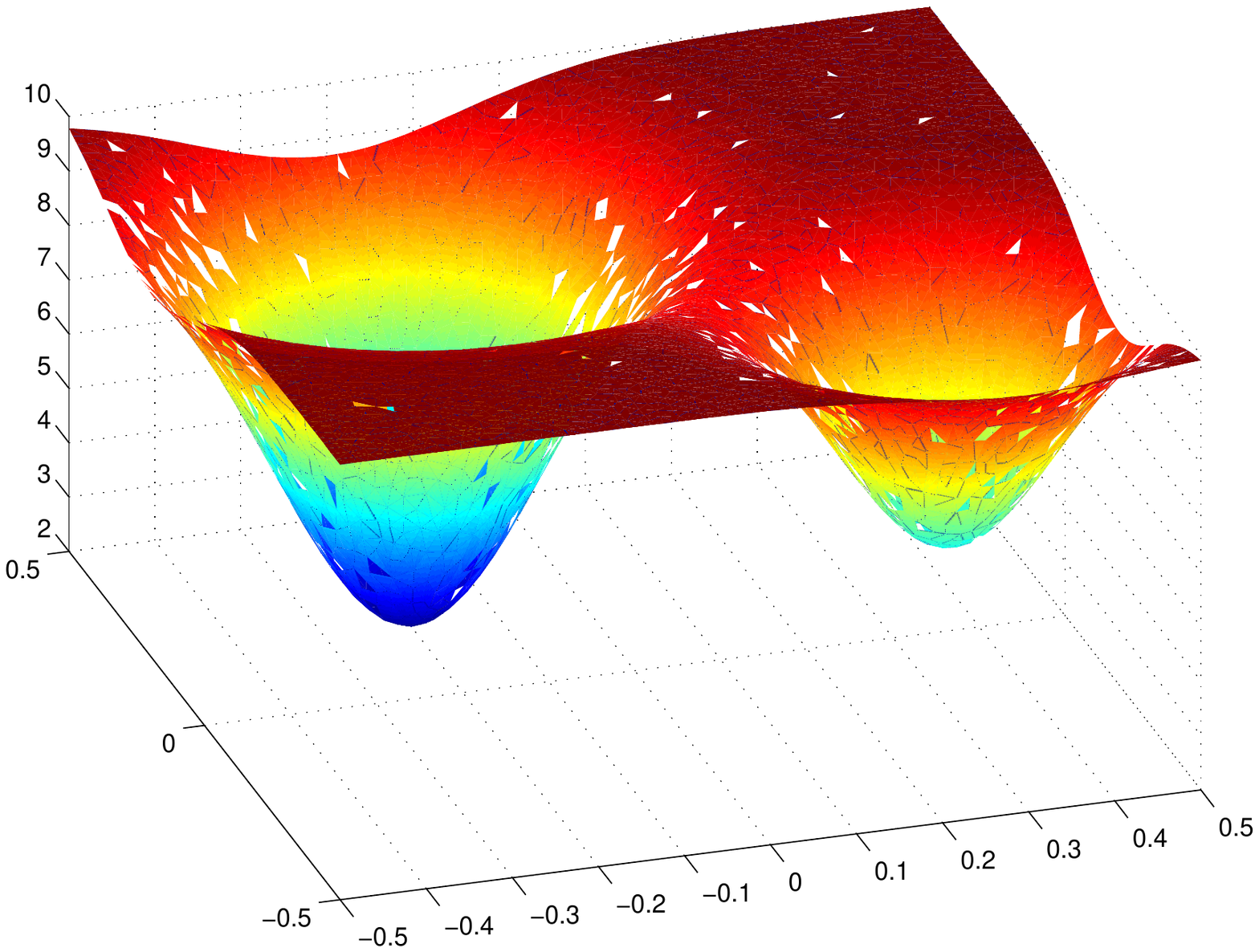}};
    \node at (0,0.5) {Potential $V$};

    \node at (1,0) {%
      \includegraphics[width=0.25\linewidth]{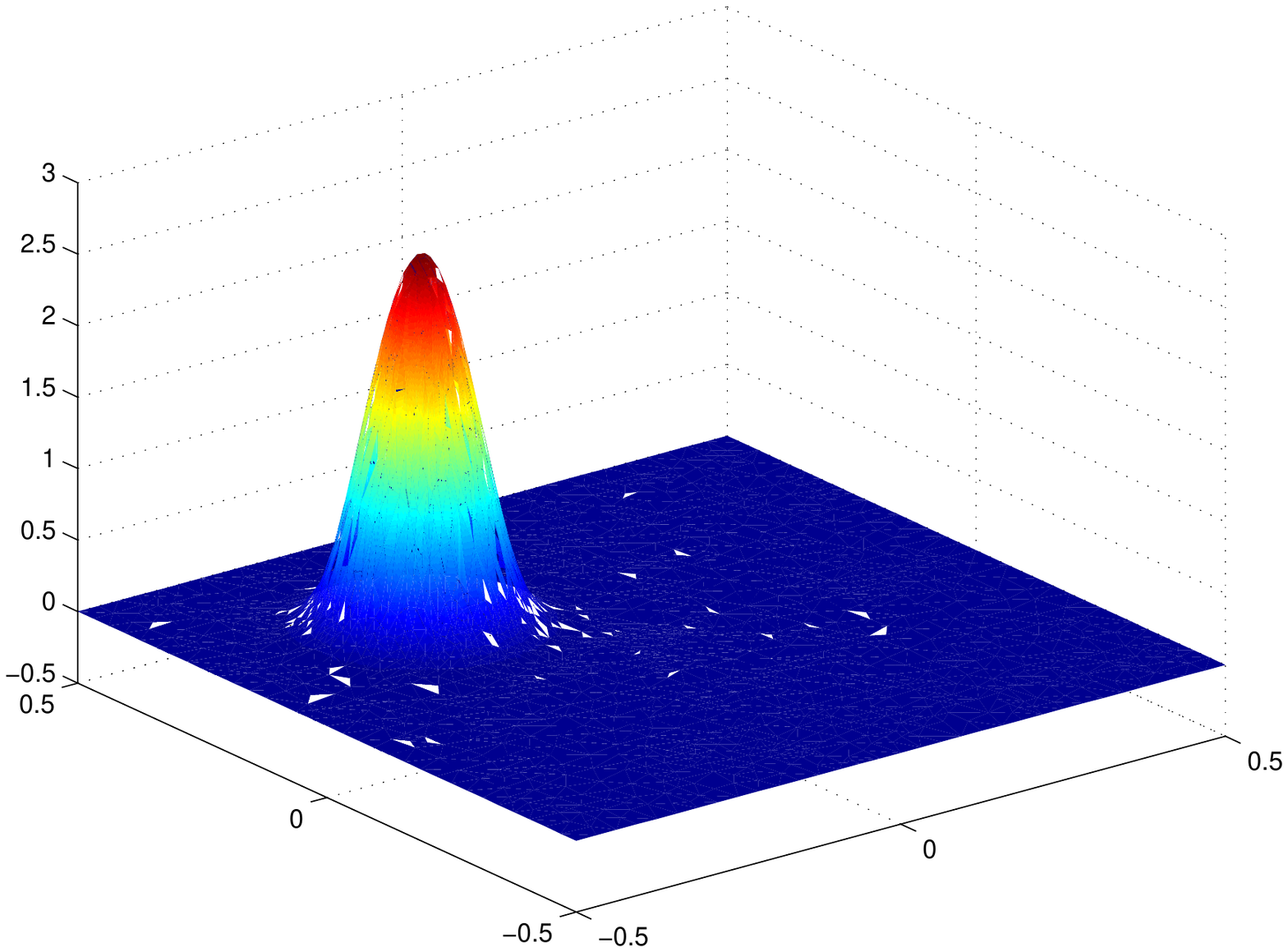}};
    \node at (1,0.5) {$\eps = 0.05$};
    \node at (1, 0.1) {$u$};

    \node at (2,0) {%
      \includegraphics[width=0.25\linewidth]{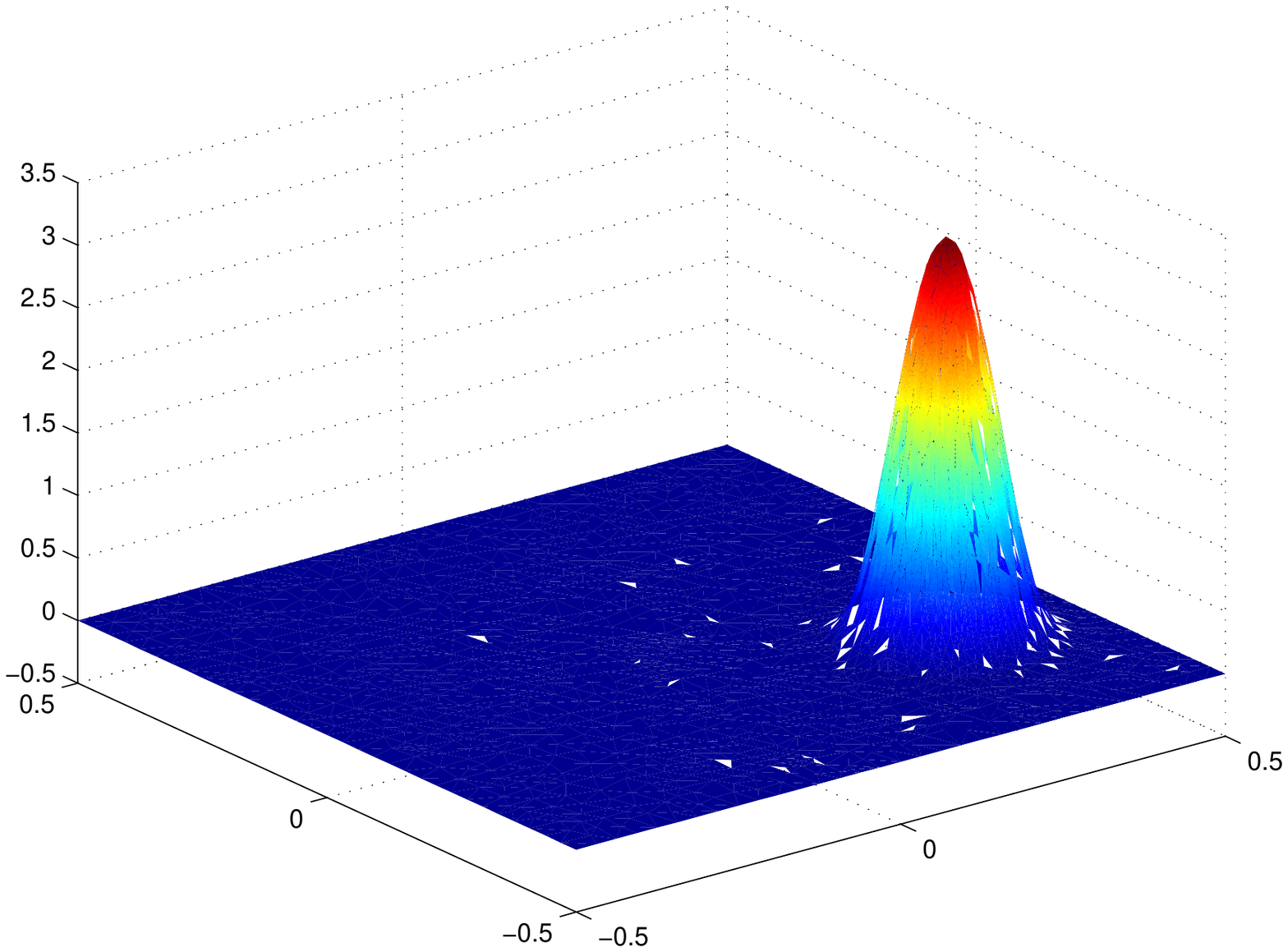}};
    \node at (2.1, 0.1) {$u$};
    \node at (2,0.5) {$\eps = 0.05$};

    \node at (3,0) {%
      \includegraphics[width=0.25\linewidth]{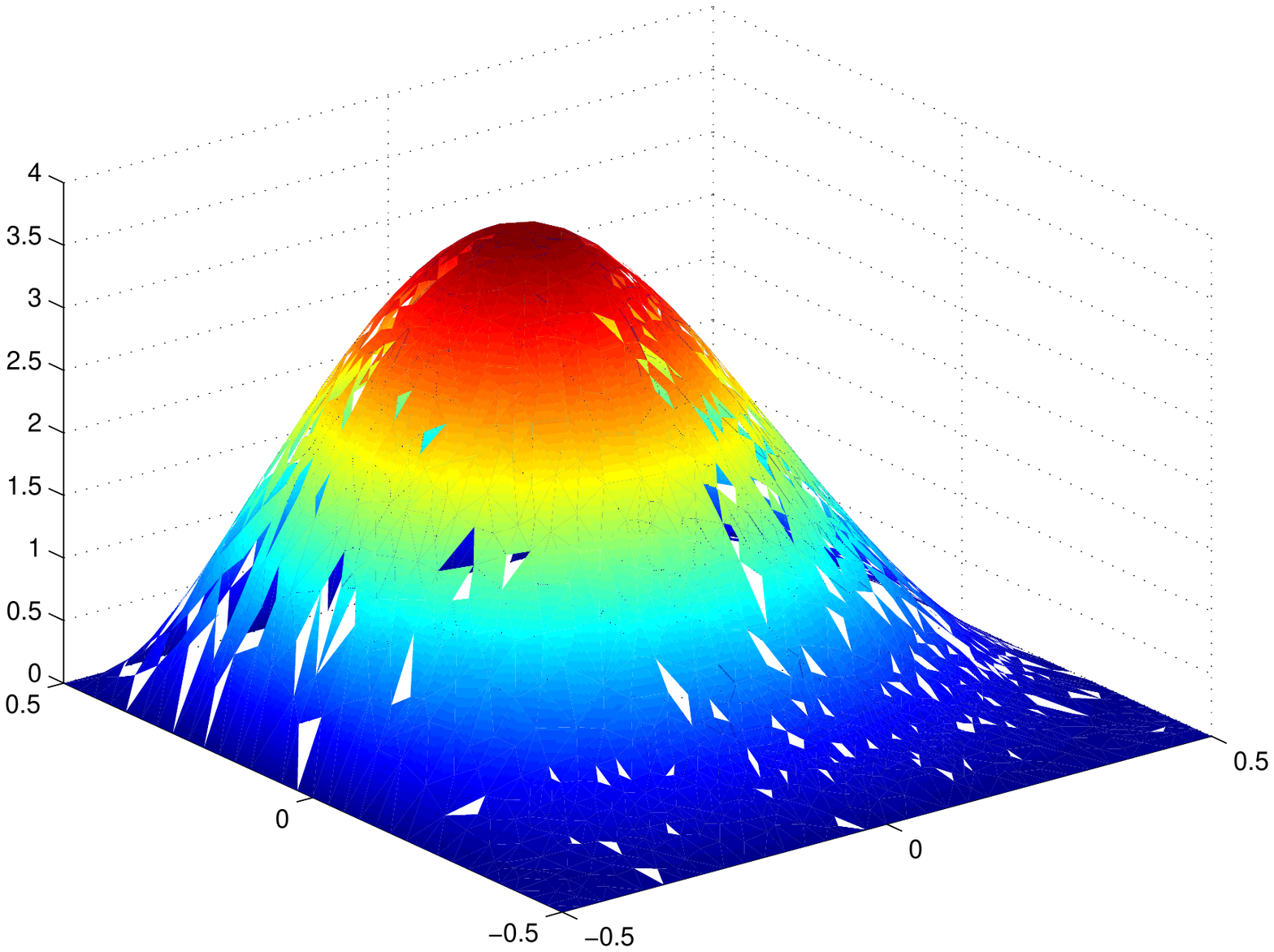}};
    \node[right] at (3.1, 0.1) {$u$};
    \node at (3,0.5) {$\eps = 0.25$};
  \end{tikzpicture}

  \vspace*{-10ex}%
  \caption{Potential $V$ and the corresponding sol.\ $u$
    to~\eqref{eq:concentration} on $(-0.5,0.5)^2$
    for $p=4$.}
  \label{fig:square,Vx,p=4}
\end{figure}

\section{Conclusion}

In this paper, we tackled the computation of ground state solutions
for a class of quasi-linear Schr\"o\-din\-ger equations which are
naturally associated with a non-smooth functional.  A change of
variable was used to overcome the lack of regularity and a mountain
pass algorithm was applied to the resulting functional to compute
saddle point solutions.

In the autonomous case, we outlined arguments and saw on
the above numerical computations that
the numerical solution $u_R$ on $\Omega_R$ converges to a radially symmetric
solution $u^*$ on the whole space as $R \to \infty$.
The existence of $u^*$ was proved by Colin and Jeanjean~\cite{cojean}.
The fact that the same numerical solution is found with many different
initial guesses suggests that the ground state solution on~$\R^N$ is
unique, a fact that was proved~\cite{gladiali} for $V$ large.
The numerical computations also suggest that the set of solutions
$u^*$ bifurcate from $0$ as $V \to 0$.

For the case of variable potential, the numerical computations
exhibited the existence of several solutions of mountain pass type
which are local minima of the functional $\mathcal{T}$ on the peak
selection set~$\Ran P$.  The asymptotic profile of these solutions
seem to be radial as it is expected from the theoretical results on
$\R^N$ for $\varepsilon \to 0$~\cite{Gloss2010,O-Severo2010}.
Interestingly, the multiplicity of positive solutions does not seem to
persist when $\varepsilon$ grows larger.  Whether the multiple
solutions come through a bifurcation from the branch of ground state
solutions as $\varepsilon$ goes to $0$ may be the subject of future
investigations.

%\bigskip

\begin{thebibliography}{00}

\bibitem{AmWa}
{\sc A.~Ambrosetti, Z-Q.~Wang}, Positive solutions to a class of
quasilinear elliptic equations on $\R$, {\em Disc. Cont. Dyna.
Syst.} {\bf 9} (2003), 55--68.

\bibitem{calsqu9}
{\sc M.\ Caliari, M.\ Squassina},
Numerical computation of soliton dynamics for NLS equations in a driving potential,
{\em Electron. J. Differential Equations} {\bf 89} (2010), 1--12.

\bibitem{calsqu11}
{\sc M. Caliari, M. Squassina},
On a bifurcation value related to quasi-linear Schr\"odinger equation,
\textit{J.\ Fixed Point Theory Appl.}, to appear.

\bibitem{mpa1}
{\sc Y.S.\ Choi, P.J.\ McKenna},
A mountain pass method for the numerical solution of semilinear elliptic problems,
{\em Nonlinear Anal.} {\bf 20} (1993), 417--437.

\bibitem{cojean}
{\sc M.~Colin, L.~Jeanjean},
Solutions for a quasilinear Schr\"odinger equation: a dual approach,
{\em Nonlinear Anal.} {\bf 56} (2004), 213--226.

\bibitem{cojeansqu}
{\sc M.~Colin, L.~Jeanjean, M. Squassina},
Stability and instability results for standing waves of quasi-linear Schr\"odinger equations,
{\em Nonlinearity} {\bf 23} (2010), 1353--1385.

\bibitem{corvdegmar}
{\sc J.-N. Corvellec, M. Degiovanni, M. Marzocchi},
Deformation properties for continuous functionals and critical point theory,
{\em Topol. Methods Nonlinear Anal.} {\bf 1} (1993), 151--171.

\bibitem{DingNi}
{\sc W.~Y.~Ding, W.~M.~Ni},
On the existence of positive entire solutions of a semilinear elliptic
equation,
{\em Arch. Rational Mech. Anal.} {\bf 91} (1986), 283--308.

\bibitem{O-Severo2010}
\textsc{J.~M.~do~\'O, U.~Severo},
Solitary waves for a class of quasilinear Schr\"odinger
equations in dimension two,
\emph{Calculus of Variations} \textbf{38} (2010), 275--315.

\bibitem{gnn}
\textsc{B.~Gidas, W.~M.~Ni, L.~Nirenberg},
Symmetry and related properties via the maximum principle,
\emph{Comm. Math. Phys.}\
 \textbf{68} (1979), 209--243.


\bibitem{gladiali}
\textsc{F.~Gladiali, M.~Squassina},
Uniqueness of ground states for a class of quasi-linear elliptic equations,
\emph{Adv.\ Nonlinear Anal.}\
\textbf{1} (2012), 159--179.


\bibitem{Gloss2010}
\textsc{E.~Gloss},
Existence and concentration of positive solutions for a quasilinear
equation in $\R^N$,
\emph{J.\ Math.\ Anal.\ Appl.}\
\textbf{371} (2010), 465--484.

\bibitem{Jeanjean-Tanaka}
\textsc{L.~Jeanjean, K.~Tanaka},
A remark on least energy solutions in $\R^N$,
\emph{Proc. Amer. Math. Soc.}
\textbf{131} (2002),  2399--2408.

\bibitem{grumtroe}
{\sc C.\ Grumiau, C.\ Troestler},
Mountain-Pass algorithm for some indefinite problems, {\em preprint}.

\bibitem{Lewis-Pang11}
\textsc{A.\ S.\ Lewis, C.H.\ J.\ Pang},
Level set methods for finding critical points of mountain pass type,
\textit{Nonlinear Analysis} \textbf{74} (2011), 4058--4082.

\bibitem{lieb}
{\sc E.\ Lieb},
On the lowest eigenvalue of the {L}aplacian for the
              intersection of two domains,
\emph{Invent. Math.}
\textbf{74} (1983), 441--448.
    		

\bibitem{LiWaWa2}
{\sc J.Q.~Liu, Y.Q.~Wang, Z.Q.~Wang},
Solutions for quasi-linear Schr\"odinger equations via the Nehari method,
{\em Comm.\ Partial Differential Equations} {\bf 29} (2004), 879--901.

\bibitem{PucciSerrin}
{\sc P.~Pucci, J.~Serrin},
The maximum principle,
{\em Progress in Nonlinear Differential Equations and their
              Applications, 73, Birkh\"auser Verlag} (2007).

\bibitem{Triangle}
\textsc{J.~R.~Shewchuk}, Delaunay Refinement Algorithms for
Triangular Mesh Generation, \textit{Computational Geometry: Theory and
Applications} \textbf{22} (2002), 21--74.

\bibitem{szulkin}
{\sc A.~Szulkin, T.~Weth},
Ground state solutions for some indefinite variational problems,
\emph{J. Funct. Anal.},
\textbf{257}(2009), 3802--3822.

\bibitem{tacheny}
{\sc N.~Tacheny, C.~Troestler},
A mountain pass algorithm with projector, 
\emph{J. Comput. Appl. Math.},
\textbf{236} (2012), 2025--2036.

\bibitem{willem}
{\sc M. Willem}, \emph{Minimax theorems},
Progress in Nonlinear Differential Equations and their Applications,
{\bf 24}. Birkh\"auser Boston, Inc., Boston, MA, 1996.


\bibitem{zhou1}
{\sc J. Zhou, Y.~Li}, A minimax method for finding multiple critical
points and its applications to semilinear elliptic pde's, {\em SIAM
Sci. Comp.}, {\bf 23} (2001), 840--865.

\bibitem{zhou2}
{\sc J. Zhou, Y.~Li}, Convergence results of a local minimax method
for finding multiple critical points, {\em SIAM
Sci. Comp.}, {\bf 24} (2002), 865--885.

\end{thebibliography}
\end{document}